\newtheorem{theorem}{Theorem}[section]
\newtheorem{axiom}[theorem]{Axiom}
\newtheorem{conjecture}[theorem]{Conjecture}
\newtheorem{corollary}[theorem]{Corollary}
\newtheorem{definition}[theorem]{Definition}
\newtheorem{example}[theorem]{Example}
\newtheorem{exercise}[theorem]{Exercise}
\newtheorem{lemma}[theorem]{Lemma}
\newtheorem{proposition}[theorem]{Proposition}
\newtheorem{remark}[theorem]{Remark}
\newenvironment{proof}[1][Proof]{\noindent\textbf{#1.} }{\ \rule{0.5em}{0.5em}}
\renewcommand{\theequation}{\thesection.\arabic{equation}}
\let\pdfoutput=\undefined\fi
\chardef\@x10\chardef\@xv60
\def\tcitime{
\def\@time{%
  \@minute\time\@hour\@minute\divide\@hour\@xv
  \ifnum\@hour<\@x 0\fi\the\@hour:%
  \multiply\@hour\@xv\advance\@minute-\@hour
  \ifnum\@minute<\@x 0\fi\the\@minute
  }}%
\def\x@hyperref#1#2#3{%
   \catcode`\~ = 12
   \catcode`\$ = 12
   \catcode`\_ = 12
   \catcode`\# = 12
   \catcode`\& = 12
   \y@hyperref{#1}{#2}{#3}%
}
\def\y@hyperref#1#2#3#4{%
   #2\ref{#4}#3
   \catcode`\~ = 13
   \catcode`\$ = 3
   \catcode`\_ = 8
   \catcode`\# = 6
   \catcode`\& = 4
}
\def\QCTOpt[#1]#2{%
  \def\QCTOptB{#1}
  \def\QCTOptA{#2}
}
\def\QCTNOpt#1{%
  \def\QCTOptA{#1}
  \let\QCTOptB\empty
}
\def\Qct{%
  \@ifnextchar[{%
    \QCTOpt}{\QCTNOpt}
}
\def\QCBOpt[#1]#2{%
  \def\QCBOptB{#1}%
  \def\QCBOptA{#2}%
}
\def\QCBNOpt#1{%
  \def\QCBOptA{#1}%
  \let\QCBOptB\empty
}
\def\Qcb{%
  \@ifnextchar[{%
    \QCBOpt}{\QCBNOpt}%
}
\def\PrepCapArgs{%
  \ifx\QCBOptA\empty
    \ifx\QCTOptA\empty
      {}%
    \else
      \ifx\QCTOptB\empty
        {\QCTOptA}%
      \else
        [\QCTOptB]{\QCTOptA}%
      \fi
    \fi
  \else
    \ifx\QCBOptA\empty
      {}%
    \else
      \ifx\QCBOptB\empty
        {\QCBOptA}%
      \else
        [\QCBOptB]{\QCBOptA}%
      \fi
    \fi
  \fi
}
\def\GRAPHICSPS#1{%
 \ifcase\GRAPHICSTYPE
   \special{ps: #1}%
 \or
   \special{language "PS", include "#1"}%
 \fi
}%
\def\graffile#1#2#3#4{%
    \bgroup
	   \@inlabelfalse
       \leavevmode
       \@ifundefined{bbl@deactivate}{\def~{\string~}}{\activesoff}%
        \raise -#4 \BOXTHEFRAME{%
           \hbox to #2{\raise #3\hbox to #2{\null #1\hfil}}}%
    \egroup
}%
\def\draftbox#1#2#3#4{%
 \leavevmode\raise -#4 \hbox{%
  \frame{\rlap{\protect\tiny #1}\hbox to #2%
   {\vrule height#3 width\z@ depth\z@\hfil}%
  }%
 }%
}%
\let\nographics=\@msidraft
\newif\ifwasdraft
\def\GRAPHIC#1#2#3#4#5{%
   \ifnum\@msidraft=\@ne\draftbox{#2}{#3}{#4}{#5}%
   \else\graffile{#1}{#3}{#4}{#5}%
   \fi
}
\def\addtoLaTeXparams#1{%
    \edef\LaTeXparams{\LaTeXparams #1}}%
\newif\ifBoxFrame \BoxFramefalse
\newif\ifOverFrame \OverFramefalse
\newif\ifUnderFrame \UnderFramefalse
\def\BOXTHEFRAME#1{%
   \hbox{%
      \ifBoxFrame
         \frame{#1}%
      \else
         {#1}%
      \fi
   }%
}
\def\doFRAMEparams#1{\BoxFramefalse\OverFramefalse\UnderFramefalse\readFRAMEparams#1\end}%
\def\readFRAMEparams#1{%
 \ifx#1\end%
  \let\next=\relax
  \else
  \ifx#1i\dispkind=\z@\fi
  \ifx#1d\dispkind=\@ne\fi
  \ifx#1f\dispkind=\tw@\fi
  \ifx#1t\addtoLaTeXparams{t}\fi
  \ifx#1b\addtoLaTeXparams{b}\fi
  \ifx#1p\addtoLaTeXparams{p}\fi
  \ifx#1h\addtoLaTeXparams{h}\fi
  \ifx#1X\BoxFrametrue\fi
  \ifx#1O\OverFrametrue\fi
  \ifx#1U\UnderFrametrue\fi
  \ifx#1w
    \ifnum\@msidraft=1\wasdrafttrue\else\wasdraftfalse\fi
    \@msidraft=\@ne
  \fi
  \let\next=\readFRAMEparams
  \fi
 \next
 }%
\def\IFRAME#1#2#3#4#5#6{%
      \bgroup
      \let\QCTOptA\empty
      \let\QCTOptB\empty
      \let\QCBOptA\empty
      \let\QCBOptB\empty
      #6%
      \parindent=0pt
      \leftskip=0pt
      \rightskip=0pt
      \setbox0=\hbox{\QCBOptA}%
      \@tempdima=#1\relax
      \ifOverFrame
          \typeout{This is not implemented yet}%
          \show\HELP
      \else
         \ifdim\wd0>\@tempdima
            \advance\@tempdima by \@tempdima
            \ifdim\wd0 >\@tempdima
               \setbox1 =\vbox{%
                  \unskip\hbox to \@tempdima{\hfill\GRAPHIC{#5}{#4}{#1}{#2}{#3}\hfill}%
                  \unskip\hbox to \@tempdima{\parbox[b]{\@tempdima}{\QCBOptA}}%
               }%
               \wd1=\@tempdima
            \else
               \textwidth=\wd0
               \setbox1 =\vbox{%
                 \noindent\hbox to \wd0{\hfill\GRAPHIC{#5}{#4}{#1}{#2}{#3}\hfill}\\%
                 \noindent\hbox{\QCBOptA}%
               }%
               \wd1=\wd0
            \fi
         \else
            \ifdim\wd0>0pt
              \hsize=\@tempdima
              \setbox1=\vbox{%
                \unskip\GRAPHIC{#5}{#4}{#1}{#2}{0pt}%
                \break
                \unskip\hbox to \@tempdima{\hfill \QCBOptA\hfill}%
              }%
              \wd1=\@tempdima
           \else
              \hsize=\@tempdima
              \setbox1=\vbox{%
                \unskip\GRAPHIC{#5}{#4}{#1}{#2}{0pt}%
              }%
              \wd1=\@tempdima
           \fi
         \fi
         \@tempdimb=\ht1
         \advance\@tempdimb by -#2
         \advance\@tempdimb by #3
         \leavevmode
         \raise -\@tempdimb \hbox{\box1}%
      \fi
      \egroup%
}%
\def\DFRAME#1#2#3#4#5{%
  \vspace\topsep
  \hfil\break
  \bgroup
     \leftskip\@flushglue
	 \rightskip\@flushglue
	 \parindent\z@
	 \parfillskip\z@skip
     \let\QCTOptA\empty
     \let\QCTOptB\empty
     \let\QCBOptA\empty
     \let\QCBOptB\empty
	 \vbox\bgroup
        \ifOverFrame 
           #5\QCTOptA\par
        \fi
        \GRAPHIC{#4}{#3}{#1}{#2}{\z@}%
        \ifUnderFrame 
           \break#5\QCBOptA
        \fi
	 \egroup
  \egroup
  \vspace\topsep
  \break
}%
\def\FFRAME#1#2#3#4#5#6#7{%
  \@ifundefined{floatstyle}
    {
     \begin{figure}[#1]%
    }
    {
	 \ifx#1h
      \begin{figure}[H]%
	 \else
      \begin{figure}[#1]%
	 \fi
	}
  \let\QCTOptA\empty
  \let\QCTOptB\empty
  \let\QCBOptA\empty
  \let\QCBOptB\empty
  \ifOverFrame
    #4
    \ifx\QCTOptA\empty
    \else
      \ifx\QCTOptB\empty
        \caption{\QCTOptA}%
      \else
        \caption[\QCTOptB]{\QCTOptA}%
      \fi
    \fi
    \ifUnderFrame\else
      \label{#5}%
    \fi
  \else
    \UnderFrametrue%
  \fi
  \begin{center}\GRAPHIC{#7}{#6}{#2}{#3}{\z@}\end{center}%
  \ifUnderFrame
    #4
    \ifx\QCBOptA\empty
      \caption{}%
    \else
      \ifx\QCBOptB\empty
        \caption{\QCBOptA}%
      \else
        \caption[\QCBOptB]{\QCBOptA}%
      \fi
    \fi
    \label{#5}%
  \fi
  \end{figure}%
 }%
\def\makeactives{
  \catcode`\"=\active
  \catcode`\;=\active
  \catcode`\:=\active
  \catcode`\'=\active
  \catcode`\~=\active
}
   \gdef\activesoff{%
      \def"{\string"}%
      \def;{\string;}%
      \def:{\string:}%
      \def'{\string'}%
      \def~{\string~}%
    }
\def\FRAME#1#2#3#4#5#6#7#8{%
 \bgroup
 \ifnum\@msidraft=\@ne
   \wasdrafttrue
 \else
   \wasdraftfalse%
 \fi
 \def\LaTeXparams{}%
 \dispkind=\z@
 \def\LaTeXparams{}%
 \doFRAMEparams{#1}%
 \ifnum\dispkind=\z@\IFRAME{#2}{#3}{#4}{#7}{#8}{#5}\else
  \ifnum\dispkind=\@ne\DFRAME{#2}{#3}{#7}{#8}{#5}\else
   \ifnum\dispkind=\tw@
    \edef\@tempa{\noexpand\FFRAME{\LaTeXparams}}%
    \@tempa{#2}{#3}{#5}{#6}{#7}{#8}%
    \fi
   \fi
  \fi
  \ifwasdraft\@msidraft=1\else\@msidraft=0\fi{}%
  \egroup
 }%
\def\TEXUX#1{"texux"}
\def\NEG#1{\leavevmode\hbox{\rlap{\thinspace/}{$#1$}}}%
\def\func#1{\mathop{\rm #1}\nolimits}%
\long\def\QQQ#1#2{%
     \long\expandafter\def\csname#1\endcsname{#2}}%
\long\def\QQA#1#2{}%
\def\QTR#1#2{{\csname#1\endcsname {#2}}}%
\def\EXPAND#1[#2]#3{}%
\def\NOEXPAND#1[#2]#3{}%
\def\LaTeXparent#1{}%
\def\ChildStyles#1{}%
\def\ChildDefaults#1{}%
\def\QTagDef#1#2#3{}%
  \providecommand{\UNICODE}[2][]{\protect\rule{.1in}{.1in}}
  \providecommand{\U}[1]{\protect\rule{.1in}{.1in}}
\def\QQfnmark#1{\footnotemark}
 \def\abstract{%
  \if@twocolumn
   \section*{Abstract (Not appropriate in this style!)}%
   \else \small 
   \begin{center}{\bf Abstract\vspace{-.5em}\vspace{\z@}}\end{center}%
   \quotation 
   \fi
  }%
   \def\registered{\relax\ifmmode{}\r@gistered
                    \else$\m@th\r@gistered$\fi}%
 \def\r@gistered{^{\ooalign
  {\hfil\raise.07ex\hbox{$\scriptstyle\rm\text{R}$}\hfil\crcr
  \mathhexbox20D}}}}{}%
\newdimen\theight
\def\newfmtname{LaTeX2e}
  \DeclareOldFontCommand{\rm}{\normalfont\rmfamily}{\mathrm}
  \DeclareOldFontCommand{\sf}{\normalfont\sffamily}{\mathsf}
  \DeclareOldFontCommand{\tt}{\normalfont\ttfamily}{\mathtt}
  \DeclareOldFontCommand{\bf}{\normalfont\bfseries}{\mathbf}
  \DeclareOldFontCommand{\it}{\normalfont\itshape}{\mathit}
  \DeclareOldFontCommand{\sl}{\normalfont\slshape}{\@nomath\sl}
  \DeclareOldFontCommand{\sc}{\normalfont\scshape}{\@nomath\sc}
\def\alpha{{\Greekmath 010B}}%
\def\beta{{\Greekmath 010C}}%
\def\gamma{{\Greekmath 010D}}%
\def\delta{{\Greekmath 010E}}%
\def\epsilon{{\Greekmath 010F}}%
\def\zeta{{\Greekmath 0110}}%
\def\eta{{\Greekmath 0111}}%
\def\theta{{\Greekmath 0112}}%
\def\iota{{\Greekmath 0113}}%
\def\kappa{{\Greekmath 0114}}%
\def\lambda{{\Greekmath 0115}}%
\def\mu{{\Greekmath 0116}}%
\def\nu{{\Greekmath 0117}}%
\def\xi{{\Greekmath 0118}}%
\def\pi{{\Greekmath 0119}}%
\def\rho{{\Greekmath 011A}}%
\def\sigma{{\Greekmath 011B}}%
\def\tau{{\Greekmath 011C}}%
\def\upsilon{{\Greekmath 011D}}%
\def\phi{{\Greekmath 011E}}%
\def\chi{{\Greekmath 011F}}%
\def\psi{{\Greekmath 0120}}%
\def\omega{{\Greekmath 0121}}%
\def\varepsilon{{\Greekmath 0122}}%
\def\vartheta{{\Greekmath 0123}}%
\def\varpi{{\Greekmath 0124}}%
\def\varrho{{\Greekmath 0125}}%
\def\varsigma{{\Greekmath 0126}}%
\def\varphi{{\Greekmath 0127}}%
\def\nabla{{\Greekmath 0272}}
\def\FindBoldGroup{%
   {\setbox0=\hbox{$\mathbf{x\global\edef\theboldgroup{\the\mathgroup}}$}}%
}
\def\Greekmath#1#2#3#4{%
    \if@compatibility
        \ifnum\mathgroup=\symbold
           \mathchoice{\mbox{\boldmath$\displaystyle\mathchar"#1#2#3#4$}}%
                      {\mbox{\boldmath$\textstyle\mathchar"#1#2#3#4$}}%
                      {\mbox{\boldmath$\scriptstyle\mathchar"#1#2#3#4$}}%
                      {\mbox{\boldmath$\scriptscriptstyle\mathchar"#1#2#3#4$}}%
        \else
           \mathchar"#1#2#3#4%
        \fi 
    \else 
        \FindBoldGroup
        \ifnum\mathgroup=\theboldgroup 
           \mathchoice{\mbox{\boldmath$\displaystyle\mathchar"#1#2#3#4$}}%
                      {\mbox{\boldmath$\textstyle\mathchar"#1#2#3#4$}}%
                      {\mbox{\boldmath$\scriptstyle\mathchar"#1#2#3#4$}}%
                      {\mbox{\boldmath$\scriptscriptstyle\mathchar"#1#2#3#4$}}%
        \else
           \mathchar"#1#2#3#4%
        \fi     	    
	  \fi}
\newif\ifGreekBold  \GreekBoldfalse
\let\SAVEPBF=\pbf
\def\pbf{\GreekBoldtrue\SAVEPBF}%
  \newcounter{equationnumber}  
  \def\mathletters{%
     \addtocounter{equation}{1}
     \edef\@currentlabel{\theequation}%
     \setcounter{equationnumber}{\c@equation}
     \setcounter{equation}{0}%
     \edef\theequation{\@currentlabel\noexpand\alph{equation}}%
  }
    \def\BibTeX{{\rm B\kern-.05em{\sc i\kern-.025em b}\kern-.08em
                 T\kern-.1667em\lower.7ex\hbox{E}\kern-.125emX}}}{}%
\def\AmS{{\protect\usefont{OMS}{cmsy}{m}{n}%
                A\kern-.1667em\lower.5ex\hbox{M}\kern-.125emS}}}{}%
\def\@@eqncr{\let\@tempa\relax
    \ifcase\@eqcnt \def\@tempa{& & &}\or \def\@tempa{& &}%
      \else \def\@tempa{&}\fi
     \@tempa
     \if@eqnsw
        \iftag@
           \@taggnum
        \else
           \@eqnnum\stepcounter{equation}%
        \fi
     \fi
     \global\tag@false
     \global\@eqnswtrue
     \global\@eqcnt\z@\cr}
\def\TCItag{\@ifnextchar*{\@TCItagstar}{\@TCItag}}
\def\@TCItag#1{%
    \global\tag@true
    \global\def\@taggnum{(#1)}%
    \global\def\@currentlabel{#1}}
\def\@TCItagstar*#1{%
    \global\tag@true
    \global\def\@taggnum{#1}%
    \global\def\@currentlabel{#1}}
\def\ExitTCILatex{\makeatother }
\if@compatibility\message{amsmath already loaded}\fi\aftergroup\ExitTCILatex}
\if@compatibility\message{amstex already loaded}\fi\aftergroup\ExitTCILatex}
\if@compatibility\message{amsgen already loaded}\fi\aftergroup\ExitTCILatex}
\let\DOTSI\relax
\def\RIfM@{\relax\ifmmode}%
\def\FN@{\futurelet\next}%
\def\iint{\DOTSI\intno@\tw@\FN@\ints@}%
\def\iiint{\DOTSI\intno@\thr@@\FN@\ints@}%
\def\iiiint{\DOTSI\intno@4 \FN@\ints@}%
\def\idotsint{\DOTSI\intno@\z@\FN@\ints@}%
\def\ints@{\findlimits@\ints@@}%
\newif\iflimtoken@
\newif\iflimits@
\def\findlimits@{\limtoken@true\ifx\next\limits\limits@true
 \else\ifx\next\nolimits\limits@false\else
 \limtoken@false\ifx\ilimits@\nolimits\limits@false\else
 \ifinner\limits@false\else\limits@true\fi\fi\fi\fi}%
\def\multint@{\int\ifnum\intno@=\z@\intdots@                          
 \else\intkern@\fi                                                    
 \ifnum\intno@>\tw@\int\intkern@\fi                                   
 \ifnum\intno@>\thr@@\int\intkern@\fi                                 
 \int}
\def\multintlimits@{\intop\ifnum\intno@=\z@\intdots@\else\intkern@\fi
 \ifnum\intno@>\tw@\intop\intkern@\fi
 \ifnum\intno@>\thr@@\intop\intkern@\fi\intop}%
\def\intic@{%
    \mathchoice{\hskip.5em}{\hskip.4em}{\hskip.4em}{\hskip.4em}}%
\def\negintic@{\mathchoice
 {\hskip-.5em}{\hskip-.4em}{\hskip-.4em}{\hskip-.4em}}%
\def\ints@@{\iflimtoken@                                              
 \def\ints@@@{\iflimits@\negintic@
   \mathop{\intic@\multintlimits@}\limits                             
  \else\multint@\nolimits\fi                                          
  \eat@}
 \else                                                                
 \def\ints@@@{\iflimits@\negintic@
  \mathop{\intic@\multintlimits@}\limits\else
  \multint@\nolimits\fi}\fi\ints@@@}%
\def\intkern@{\mathchoice{\!\!\!}{\!\!}{\!\!}{\!\!}}%
\def\plaincdots@{\mathinner{\cdotp\cdotp\cdotp}}%
\def\intdots@{\mathchoice{\plaincdots@}%
 {{\cdotp}\mkern1.5mu{\cdotp}\mkern1.5mu{\cdotp}}%
 {{\cdotp}\mkern1mu{\cdotp}\mkern1mu{\cdotp}}%
 {{\cdotp}\mkern1mu{\cdotp}\mkern1mu{\cdotp}}}%
\def\RIfM@{\relax\protect\ifmmode}
\def\text{\RIfM@\expandafter\text@\else\expandafter\mbox\fi}
\let\nfss@text\text
\def\text@#1{\mathchoice
   {\textdef@\displaystyle\f@size{#1}}%
   {\textdef@\textstyle\tf@size{\firstchoice@false #1}}%
   {\textdef@\textstyle\sf@size{\firstchoice@false #1}}%
   {\textdef@\textstyle \ssf@size{\firstchoice@false #1}}%
   \glb@settings}
\def\textdef@#1#2#3{\hbox{{%
                    \everymath{#1}%
                    \let\f@size#2\selectfont
                    #3}}}
\newif\iffirstchoice@
\def\Let@{\relax\iffalse{\fi\let\\=\cr\iffalse}\fi}%
\def\vspace@{\def\vspace##1{\crcr\noalign{\vskip##1\relax}}}%
\def\multilimits@{\bgroup\vspace@\Let@
 \baselineskip\fontdimen10 \scriptfont\tw@
 \advance\baselineskip\fontdimen12 \scriptfont\tw@
 \lineskip\thr@@\fontdimen8 \scriptfont\thr@@
 \lineskiplimit\lineskip
 \vbox\bgroup\ialign\bgroup\hfil$\m@th\scriptstyle{##}$\hfil\crcr}%
\def\Sb{_\multilimits@}%
\def\endSb{\crcr\egroup\egroup\egroup}%
\def\Sp{^\multilimits@}%
\newdimen\ex@
\def\rightarrowfill@#1{$#1\m@th\mathord-\mkern-6mu\cleaders
 \hbox{$#1\mkern-2mu\mathord-\mkern-2mu$}\hfill
 \mkern-6mu\mathord\rightarrow$}%
\def\leftarrowfill@#1{$#1\m@th\mathord\leftarrow\mkern-6mu\cleaders
 \hbox{$#1\mkern-2mu\mathord-\mkern-2mu$}\hfill\mkern-6mu\mathord-$}%
\def\leftrightarrowfill@#1{$#1\m@th\mathord\leftarrow
\mkern-6mu\cleaders
 \hbox{$#1\mkern-2mu\mathord-\mkern-2mu$}\hfill
 \mkern-6mu\mathord\rightarrow$}%
\def\overrightarrow{\mathpalette\overrightarrow@}%
\def\overrightarrow@#1#2{\vbox{\ialign{##\crcr\rightarrowfill@#1\crcr
 \noalign{\kern-\ex@\nointerlineskip}$\m@th\hfil#1#2\hfil$\crcr}}}%
\def\overleftarrow{\mathpalette\overleftarrow@}%
\def\overleftarrow@#1#2{\vbox{\ialign{##\crcr\leftarrowfill@#1\crcr
 \noalign{\kern-\ex@\nointerlineskip}$\m@th\hfil#1#2\hfil$\crcr}}}%
\def\overleftrightarrow{\mathpalette\overleftrightarrow@}%
\def\overleftrightarrow@#1#2{\vbox{\ialign{##\crcr
   \leftrightarrowfill@#1\crcr
 \noalign{\kern-\ex@\nointerlineskip}$\m@th\hfil#1#2\hfil$\crcr}}}%
\def\underrightarrow{\mathpalette\underrightarrow@}%
\def\underrightarrow@#1#2{\vtop{\ialign{##\crcr$\m@th\hfil#1#2\hfil
  $\crcr\noalign{\nointerlineskip}\rightarrowfill@#1\crcr}}}%
\def\underleftarrow{\mathpalette\underleftarrow@}%
\def\underleftarrow@#1#2{\vtop{\ialign{##\crcr$\m@th\hfil#1#2\hfil
  $\crcr\noalign{\nointerlineskip}\leftarrowfill@#1\crcr}}}%
\def\underleftrightarrow{\mathpalette\underleftrightarrow@}%
\def\underleftrightarrow@#1#2{\vtop{\ialign{##\crcr$\m@th
  \hfil#1#2\hfil$\crcr
 \noalign{\nointerlineskip}\leftrightarrowfill@#1\crcr}}}%
\def\qopnamewl@#1{\mathop{\operator@font#1}\nlimits@}
\let\nlimits@\displaylimits
\def\setboxz@h{\setbox\z@\hbox}
\def\varlim@#1#2{\mathop{\vtop{\ialign{##\crcr
 \hfil$#1\m@th\operator@font lim$\hfil\crcr
 \noalign{\nointerlineskip}#2#1\crcr
 \noalign{\nointerlineskip\kern-\ex@}\crcr}}}}
 \def\rightarrowfill@#1{\m@th\setboxz@h{$#1-$}\ht\z@\z@
  $#1\copy\z@\mkern-6mu\cleaders
  \hbox{$#1\mkern-2mu\box\z@\mkern-2mu$}\hfill
  \mkern-6mu\mathord\rightarrow$}
\def\leftarrowfill@#1{\m@th\setboxz@h{$#1-$}\ht\z@\z@
  $#1\mathord\leftarrow\mkern-6mu\cleaders
  \hbox{$#1\mkern-2mu\copy\z@\mkern-2mu$}\hfill
  \mkern-6mu\box\z@$}
\def\projlim{\qopnamewl@{proj\,lim}}
\def\injlim{\qopnamewl@{inj\,lim}}
\def\varinjlim{\mathpalette\varlim@\rightarrowfill@}
\def\varprojlim{\mathpalette\varlim@\leftarrowfill@}
\def\varliminf{\mathpalette\varliminf@{}}
\def\varliminf@#1{\mathop{\underline{\vrule\@depth.2\ex@\@width\z@
   \hbox{$#1\m@th\operator@font lim$}}}}
\def\varlimsup{\mathpalette\varlimsup@{}}
\def\varlimsup@#1{\mathop{\overline
  {\hbox{$#1\m@th\operator@font lim$}}}}
\def\align{\@verbatim \frenchspacing\@vobeyspaces \@alignverbatim
You are using the "align" environment in a style in which it is not defined.}
\let\csname endalign*\endcsname =\endtrivlist
\def\alignat{\@verbatim \frenchspacing\@vobeyspaces \@alignatverbatim
You are using the "alignat" environment in a style in which it is not defined.}
\let\csname endalignat*\endcsname =\endtrivlist
\def\xalignat{\@verbatim \frenchspacing\@vobeyspaces \@xalignatverbatim
You are using the "xalignat" environment in a style in which it is not defined.}
\let\csname endxalignat*\endcsname =\endtrivlist
\def\gather{\@verbatim \frenchspacing\@vobeyspaces \@gatherverbatim
You are using the "gather" environment in a style in which it is not defined.}
\let\csname endgather*\endcsname =\endtrivlist
\def\multiline{\@verbatim \frenchspacing\@vobeyspaces \@multilineverbatim
You are using the "multiline" environment in a style in which it is not defined.}
\let\csname endmultiline*\endcsname =\endtrivlist
\def\arrax{\@verbatim \frenchspacing\@vobeyspaces \@arraxverbatim
You are using a type of "array" construct that is only allowed in AmS-LaTeX.}
\def\tabulax{\@verbatim \frenchspacing\@vobeyspaces \@tabulaxverbatim
You are using a type of "tabular" construct that is only allowed in AmS-LaTeX.}
\let\csname endarrax*\endcsname =\endtrivlist
\let\csname endtabulax*\endcsname =\endtrivlist
 \def\endequation{%
     \ifmmode\ifinner 
      \iftag@
        \addtocounter{equation}{-1} 
        $\hfil
           \displaywidth\linewidth\@taggnum\egroup \endtrivlist
        \global\tag@false
        \global\@ignoretrue   
      \else
        $\hfil
           \displaywidth\linewidth\@eqnnum\egroup \endtrivlist
        \global\tag@false
        \global\@ignoretrue 
      \fi
     \else   
      \iftag@
        \addtocounter{equation}{-1} 
        \eqno \hbox{\@taggnum}
        \global\tag@false%
        $$\global\@ignoretrue
      \else
        \eqno \hbox{\@eqnnum}
        $$\global\@ignoretrue
      \fi
     \fi\fi
 } 
 \newif\iftag@ \tag@false
 \def\TCItag{\@ifnextchar*{\@TCItagstar}{\@TCItag}}
 \def\@TCItag#1{%
     \global\tag@true
     \global\def\@taggnum{(#1)}%
     \global\def\@currentlabel{#1}}
 \def\@TCItagstar*#1{%
     \global\tag@true
     \global\def\@taggnum{#1}%
     \global\def\@currentlabel{#1}}
     \def\tag{\@ifnextchar*{\@tagstar}{\@tag}}
     \def\@tag#1{%
         \global\tag@true
         \global\def\@taggnum{(#1)}}
     \def\@tagstar*#1{%
         \global\tag@true
         \global\def\@taggnum{#1}}
\begin{document}

\title{Estimates and monotonicity for a heat flow of isometric $G_{2}$%
-structures}
\author{Sergey Grigorian \\
School of Mathematical and Statistical Sciences\\
University of Texas Rio Grande Valley\\
1201 W. University Drive\\
Edinburg, TX 78539}
\maketitle

\begin{abstract}
Given a $7$-dimensional compact Riemannian manifold $\left( M,g\right) $
that admits $G_{2}$-structure, all the $G_{2}$-structures that are
compatible with the metric $g$ are parametrized by unit sections of an
octonion bundle over $M$. We define a natural energy functional on unit
octonion sections and consider its associated heat flow. The critical points
of this functional and flow precisely correspond to $G_{2}$-structures with
divergence-free torsion. In this paper, we first derive estimates for
derivatives of \ $V\left( t\right) $ along the flow and prove that the flow
exists as long as the torsion remains bounded. We also prove a monotonicity
formula and an $\varepsilon $-regularity result for this flow. Finally,
we show that within a metric class of $G_{2}$-structures that contains a
torsion-free $G_{2}$-structure, under certain conditions, the flow will
converge to a torsion-free $G_{2}$-structure.
\end{abstract}

\section{Introduction}

A fundamental problem in the study of $7$-dimensional manifolds with $G_{2}$%
-structures is the question of general existence conditions for torsion-free 
$G_{2}$-structures, which are the ones that correspond to metrics with
holonomy contained in $G_{2}$. One of the possible approaches is to try and
construct a flow of $G_{2}$-structures which under certain conditions would
converge to torsion-free $G_{2}$-structures. This approach was originally
pioneered by Robert Bryant \cite{bryant-2003} when he introduced the
Laplacian flow of closed $G_{2}$-structures, i.e. ones for which the
defining $3$-form $\varphi $ is closed. Later, Karigiannis, McKay, and Tsui 
\cite{KarigiannisMcKayTsui} introduced a similar flow, known as the \emph{%
Laplacian coflow}, for co-closed $G_{2}$-structures. It has some similar
properties to Bryant's flow - its stationary points are precisely
torsion-free $G_{2}$-structures, and it may be interpreted as the gradient
flow of the volume functional. However, as discovered in \cite%
{GrigorianCoflow}, it has a crucial deficiency in that it is non-parabolic.
In that paper, the author attempted to rectify the coflow by introducing the 
\emph{modified coflow}, which has an additional term that changes the sign
of the term that made the coflow non-parabolic, but would still preserve the
co-closed condition. However, the new flow lacks some of nicer features of
the original coflow - in particular, it has additional non-torsion-free
stationary points and it is not known if it can be written as a gradient
flow of some functional. One of the advantages of working with co-closed $%
G_{2}$-structures is that they are generally more abundant than closed ones.
An application of the $h$-principle in \cite{CrowleyNordstrom1} shows that
any compact manifold that admits $G_{2}$-structures will also admit
co-closed $G_{2}$-structures. Therefore, it is very important to understand
under which conditions it is possible to deform a co-closed $G_{2}$%
-structure to a torsion-free one.

More specifically, given a co-closed $G_{2}$-structure, i.e. one where $\psi
:=\ast \varphi $ is a closed $4$-form, the linearization at $\psi $ of the
corresponding Hodge Laplacian is an indefinite operator. This is in contrast
to the closed case, i.e. when $d\varphi =0$, where the linearization at $%
\varphi $ of $\Delta _{\varphi }\ $is a semi-definite operator, which can
then made strongly elliptic by the addition of a Lie derivative term to take
into account diffeomorphism invariance. In the co-closed case, the term that
causes $\Delta _{\psi }$ to be indefinite is $\pi _{7}\left( \Delta _{\psi
}\psi \right) ,$ which is the component of $\Delta _{\psi }\psi $ in the $7$%
-dimensional representation $\Lambda _{7}^{4}$ of $G_{2}.$ This term is
however determined by $\func{div}T$ - the divergence of the torsion \cite%
{GrigorianCoflow,GrigorianFlowSurvey}. It should be noted that for closed $%
G_{2}$-structures, $\func{div}T$ always vanishes \cite{bryant-2003}, so that
is why this issue doesn't arise in that case. Therefore, the condition $%
\func{div}T=0$ may be thought of as another \textquotedblleft
gauge-fixing\textquotedblright\ condition to make $\Delta _{\psi }\psi $
elliptic. From the point of view of $G_{2}$-structure coflows, the condition 
$\func{div}T=0$ makes the original and modified coflows equal to leading
order. Therefore, these considerations make it very important to understand
this divergence-free torsion property and in particular, under which
conditions do $G_{2}$-structures with $\func{div}T=0$ exist.

Another motivation for looking at divergence-free torsion comes from the
following observation. As noted above, $\func{div}T$ enters the $\Lambda
_{7}^{4}$ part of $\Delta _{\psi }\psi $. However, it is known \cite%
{karigiannis-2005-57} that deformations of $\varphi $ along $\Lambda
_{7}^{3},$ and equivalently of $\psi $ along $\Lambda _{7}^{4},$ keep the
metric unchanged and simply deform the $G_{2}$-structure within a fixed
metric class. Therefore, fixing $\func{div}T=0$ essentially corresponds to
taking particular a representatives of the metric class. Indeed, in an
investigation of isometric $G_{2}$-structures (that is, ones that are
compatible with the same metric) in \cite{GrigorianOctobundle}, it was found
that on a compact manifold, $G_{2}$-structures with $\func{div}T=0$ are
precisely the critical points of the $L^{2}$-norm of the torsion when
restricted to a fixed metric class. In \cite{GrigorianOctobundle} this
functional was also reformulated as an energy functional $\mathcal{E=}%
\int_{M}\left\vert DV\right\vert ^{2}\func{vol}$ for unit octonion sections
that parametrize isometric $G_{2}$-structures, were $V$ is a unit octonion
section and $D$ is the octonion covariant derivative defined with respect to
some fixed background $G_{2}$-structure. This allowed to rewrite the
condition $\func{div}T=0$ as a semilinear elliptic equation for octonion
sections and similarly, the negative gradient flow of $\mathcal{E}$ then
becomes a semilinear heat equation:%
\begin{equation}
\frac{\partial V}{\partial t}=\Delta _{D}V+\left\vert DV\right\vert ^{2}V
\label{flow}
\end{equation}%
where $\Delta _{D}=-D^{\ast }D$ is the Laplacian operator corresponding to $%
D $.

Given that (\ref{flow}) precisely corresponds to the $\Lambda _{7}^{4}$
component of the Laplacian coflow, it is crucial to understand its
properties in more detail. In particular, it is expected that at least under
some conditions, it should converge to a $G_{2}$-structure with $\func{div}%
T=0.$ In future work, this may be used as a gauge-fixing condition that
could relate the original coflow and the modified one.

It is noteworthy that this flow has remarkable similarities to the harmonic
heat flow and the Yang-Mills flow. Just as these two classical flows, (\ref%
{flow}) appears as the gradient flow of an energy functional and in the
analysis it becomes clear that many of the tools used for the harmonic heat
flow and the Yang-Mills flow can be adapted in this setting as well. As
such, it is another example of a flow that doesn't change the geometry of
the underlying space (as opposed to the Ricci flow and aforementioned
Laplacian flows of $G_{2}$-structures), but is still fundamentally related
to the geometry.

In this paper, we first give a brief overview of $G_{2}$-structures and
octonion bundles in Sections \ref{secg2struct} and \ref{secOctoCovDiv}.
Then, in Section \ref{secEnergy} we reintroduce the energy functional for
octonions and consider some of its properties. In Section \ref{secHeat}, we
then work out estimates for the flow (\ref{flow}). For convenience, we
introduce the quantity $\Lambda \left( x,t\right) =\left\vert DV\left(
x,t\right) \right\vert ^{2}=\left\vert T^{\left( V\right) }\right\vert ^{2},$
where $T^{\left( V\right) }$ is the torsion of the $G_{2}$-structure that
corresponds to the octonion section $V$. We work out the evolution of
derivatives of $V$ and prove the following:

\begin{enumerate}
\item If $V\left( t\right) $ is a smooth solution to (\ref{flow}) on a
finite maximal time interval $[0,t_{\max })$, then for any $0\leq t<t_{\max
} $, 
\begin{equation}
\Lambda \left( t\right) \leq \frac{2R_{1}}{\left( 1+\frac{2R_{1}}{\Lambda
_{0}+R_{2}}\right) e^{-4R_{1}t}-1}-R_{2}
\end{equation}%
where $\Lambda \left( t\right) =\sup_{x\in M}\Lambda \left( x,t\right) $ and 
$R_{1},$ $R_{2}$ are some constants that depend on the curvature and the
background $G_{2}$-structure.

\item If $V\left( t\right) $ is a smooth solution to (\ref{flow}) on a
finite maximal time interval $[0,t_{\max }),$ and $\Lambda \left( t\right) $
is bounded, then all derivatives of $V$ also remain bounded.

\item As long as $\Lambda \left( t\right) ,$ and hence $\left\vert T^{\left(
V\right) }\right\vert $, remains bounded, there will exist a smooth solution 
$V\left( t\right) $ to the flow (\ref{flow}).
\end{enumerate}

The methods used here are similar to what Lotay and Wei \cite{LotayWei1}
used for the Laplacian flow of $G_{2}$-structures, Weinkove \cite{WeinkoveYM}
used for the Yang-Mills flow, Grayson and Hamilton \cite{GraysonHamilton}
used for the harmonic map flow, and which originally Shi \cite{ShiEstimate}
introduced for the Ricci flow.

In Section \ref{secMonotonicty}, given a solution $V$ of (\ref{flow}), we
define the quantity 
\begin{equation}
Z\left( t\right) =\left( t_{0}-t\right) \int_{M}\left\vert DV\right\vert
^{2}k\func{vol}
\end{equation}%
where $k$ is a positive scalar solution of the backwards heat equation,
evolving backwards in time from $t=t_{0}$, and in Theorem \ref%
{thmMonotonicity}, we prove that $Z\left( t\right) $ satisfies an almost
monotonicity formula. While $Z\left( t\right) $ is not strictly monotonic
along the flow, it is well-behaved and can be controlled. In particular, we
show that for $t\geq \tau $, 
\begin{equation}
Z\left( t\right) \leq CZ\left( \tau \right) +C\left( t-\tau \right) \left( 
\mathcal{E}_{0}+\mathcal{E}_{0}^{\frac{1}{2}}\right)
\end{equation}%
where $C$ is some constant that depends on the geometry of the manifold and $%
\mathcal{E}_{0}$ is the initial value of the functional $\mathcal{E}.$ This
is similar to the monotonicity results obtained by Hamilton for the harmonic
map heat flow and the Yang-Mills flow in \cite{HamiltonMonotonicity}. Other
versions of monotonicity results had been obtained for the harmonic map flow
in \cite{ChenDingHM,StruweChen,StruweHM1} and for the Yang-Mills flow in 
\cite{ChenShenYM,HongTianYM,KelleherStreetsYM1}.

In Section \ref{secEpsReg}, we define the $\mathcal{F}$-functional, which
essentially replaces $k$ in $Z$ by the heat kernel of the backwards heat
equation. Applying the monotonicity formula allows us to prove an $%
\varepsilon $-regularity result for solutions of (\ref{flow}) in Theorem \ref%
{thmEpsRegular}, which says that if $\mathcal{F}\leq \varepsilon $ for some $%
\varepsilon >0$, then the flow may always be smoothly extended. This then
leads on to global existence of solutions for sufficiently small initial
energy density $\Lambda _{0}=\left\vert DV\left( 0\right) \right\vert ^{2}$.
This again builds upon prior work on the harmonic map heat flow and the
Yang-Mills flow. An elliptic version of $\varepsilon $-regularity for
harmonic maps was originally introduced by Schoen and Uhlenbeck in \cite%
{SchoenUhlenbeckHM1}, and parabolic versions were given by Struwe \cite%
{StruweHM1}, Chen and Ding \cite{ChenDingHM}, Grayson and Hamilton \cite%
{GraysonHamilton}. For the Yang-Mills flow, $\varepsilon $-regularity
results were given by Chen and Shen in \cite{ChenShenYM} and Weinkove in 
\cite{WeinkoveYM}.

In Section \ref{secrealim}, we consider a special case when the flow takes
place in the presence of a torsion-free $G_{2}$-structure, that is, the
metric has holonomy contained in $G_{2}$. In that case, we can take the
background $G_{2}$-structure to be torsion-free, and consider the flow (\ref%
{flow}) starting from an arbitrary octonion section. That particular
torsion-free $G_{2}$-structure is then represented by constant octonion
sections $\pm 1$. Therefore, it makes sense to decompose the unit octonion $%
V\left( t\right) $ into real and imaginary parts and then the evolution of
the real part $f\left( t\right) =\func{Re}V\left( t\right) $ is of
particular interest. Indeed, we find that $f^{2}$ satisfies the Minimum
Principle, so that pointwise, it is bounded below by its infimum at $t=0\,,$
and moreover, if initially $\inf \left\vert f\left( 0\right) \right\vert >0$%
, then the $L^{1}$-norm of $f$ grows monotonically along the flow, with the
time derivative bounded below by a constant multiple of $\mathcal{E}$. This
is significant because clearly $\left\vert f\left( t\right) \right\vert $ is
bounded above by $1$, and hence if the flow exists for all $t\geq 0$ (such
as under the condition from Section \ref{secEpsReg}), then it must reach a
torsion-free $G_{2}$-structure, i.e. a global minimum of $\mathcal{E}$. This
is again very similar to the behavior of the harmonic map heat flow, where
if the flow satisfies a small initial energy condition and the initial map
is homotopic to a constant map, then the heat flow will converge to a
constant map \cite{ChenDingHM}.

Note that two days after the initial version of this paper appeared on
arXiv, a paper by Dwivedi, Gianniotis, and Karigiannis \cite{DGKisoflow}
that has a substantial but independent overlap with this paper has also been
posted. However, while a number of conclusions and techniques are similar,
the points of view on the flow (\ref{heatflow}) are different. In this
paper, we regard this as a flow of octonion sections, while in \cite%
{DGKisoflow} a more traditional geometric flows approach is used. Both
approaches are valuable and complementary and provide different perspectives
on the same phenomenon. Since the appearance of the initial versions of these two papers, there has been a very useful cross-pollination of ideas and in the final version of the present paper in some instances we allude to \cite{DGKisoflow} for additional clarity and completeness.

Even more recently, a preprint by Loubeau and S\'{a}%
\ Earp \cite{SaEarpLoubeau} appeared, where a similar flow is studied but from yet
another point of view. In \cite{SaEarpLoubeau}, a more general concept of a
harmonic geometric structure is defined, which in the $G_{2}$ case reduces
to critical points of the functional $\mathcal{E}$ - that is, $G_{2}$%
-structures with divergence-free torsion. Similarly, the harmonic flow of
geometric structures then reduces to the flow (\ref{flow}) in the $G_{2}$
case. 

\subsubsection*{Acknowledgements}

This work was supported by the National Science Foundation [DMS-1811754].
The author also thanks the referee for very helpful remarks.

\section{$G_{2}$-structures}

\setcounter{equation}{0}\label{secg2struct}The 14-dimensional group $G_{2}$
is the smallest of the five exceptional Lie groups and is closely related to
the octonions, which is the noncommutative, nonassociative, $8$-dimensional
normed division algebra. In particular, $G_{2}$ can be defined as the
automorphism group of the octonion algebra. Given the octonion algebra $%
\mathbb{O}$, there exists a unique orthogonal decomposition into a real
part, that is isomorphic to $\mathbb{R},$ and an \emph{imaginary }(or \emph{%
pure}) part, that is isomorphic to $\mathbb{R}^{7}$:%
\begin{equation}
\mathbb{O}\cong \mathbb{R}\oplus \mathbb{R}^{7}
\end{equation}%
Correspondingly, given an octonion $a\in \mathbb{O}$, we can uniquely write 
\begin{equation*}
a=\func{Re}a+\func{Im}a
\end{equation*}%
where $\func{Re}a\in \mathbb{R}$, and $\func{Im}a\in \mathbb{R}^{7}$. We can
now use octonion multiplication to define a vector cross product $\times $
on $\mathbb{R}^{7}$. Given $u,v\in \mathbb{R}^{7}$, we regard them as
octonions in $\func{Im}\mathbb{O}$, multiply them together using octonion
multiplication, and then project the result to $\func{Im}\mathbb{O}$ to
obtain a new vector in $\mathbb{R}^{7}$:%
\begin{equation}
u\times v=\func{Im}\left( uv\right) .  \label{octovp}
\end{equation}%
The subgroup of $GL\left( 7,\mathbb{R}\right) $ that preserves this vector
cross product is then precisely the group $G_{2}$. A detailed account of the
properties of the octonions and their relationship to exceptional Lie groups
is given by John Baez in \cite{BaezOcto}. The structure constants of the
vector cross product define a $3$-form on $\mathbb{R}^{7}$, hence $G_{2}$ is
alternatively defined as the subgroup of $GL\left( 7,\mathbb{R}\right) $
that preserves a particular $3$-form $\varphi _{0}$ \cite{Joycebook}.

\begin{definition}
Let $\left( e^{1},e^{2},...,e^{7}\right) $ be the standard basis for $\left( 
\mathbb{R}^{7}\right) ^{\ast }$, and denote $e^{i}\wedge e^{j}\wedge e^{k}$
by $e^{ijk} $. Then define $\varphi _{0}$ to be the $3$-form on $\mathbb{R}%
^{7}$ given by 
\begin{equation}
\varphi _{0}=e^{123}+e^{145}+e^{167}+e^{246}-e^{257}-e^{347}-e^{356}.
\label{phi0def}
\end{equation}%
Then $G_{2}$ is defined as the subgroup of $GL\left( 7,\mathbb{R}\right) $
that preserves $\varphi _{0}$.
\end{definition}

In general, given a $n$-dimensional manifold $M$, a $G$-structure on $M$ for
some Lie subgroup $G$ of $GL\left( n,\mathbb{R}\right) $ is a reduction of
the frame bundle $F$ over $M$ to a principal subbundle $P$ with fibre $G$. A 
$G_{2}$-structure is then a reduction of the frame bundle on a $7$%
-dimensional manifold $M$ to a $G_{2}$-principal subbundle. The obstructions
for the existence of a $G_{2}$-structure are purely topological. It
well-known \cite{FernandezGray,FriedrichNPG2,Gray-VCP} that a manifold
admits a $G_{2}$-structure if and only if the Stiefel-Whitney classes $w_{1}$
and $w_{2}$ both vanish.

It turns out that there is a $1$-$1$ correspondence between $G_{2}$%
-structures on a $7$-manifold and smooth $3$-forms $\varphi $ for which the $%
7$-form-valued bilinear form $B_{\varphi }$ as defined by (\ref{Bphi}) is
positive definite (for more details, see \cite{Bryant-1987} and the arXiv
version of \cite{Hitchin:2000jd}). 
\begin{equation}
B_{\varphi }\left( u,v\right) =\frac{1}{6}\left( u\lrcorner \varphi \right)
\wedge \left( v\lrcorner \varphi \right) \wedge \varphi  \label{Bphi}
\end{equation}%
Here the symbol $\lrcorner $ denotes contraction of a vector with the
differential form, which can be written in local coordinates as 
\begin{equation}
\left( u\lrcorner \varphi \right) _{mn}=u^{a}\varphi _{amn}
\label{contractdef}
\end{equation}%
where we have also used the Einstein summation convention, which we will be
using henceforth whenever dealing with expressions in local coordinates.

A smooth $3$-form $\varphi $ is said to be \emph{positive }if $B_{\varphi }$
is the tensor product of a positive-definite bilinear form and a
nowhere-vanishing $7$-form. In this case, it defines a unique Riemannian
metric $g_{\varphi }$ and volume form $\mathrm{vol}_{\varphi }$ such that
for vectors $u$ and $v$, the following holds 
\begin{equation}
g_{\varphi }\left( u,v\right) \mathrm{vol}_{\varphi }=\frac{1}{6}\left(
u\lrcorner \varphi \right) \wedge \left( v\lrcorner \varphi \right) \wedge
\varphi  \label{gphi}
\end{equation}%
An equivalent way of defining a positive $3$-form $\varphi $, is to say that
at every point, $\varphi $ is in the $GL\left( 7,\mathbb{R}\right) $-orbit
of $\varphi _{0}$. It can be easily checked that the metric (\ref{gphi}) for 
$\varphi =\varphi _{0}$ is in fact precisely the standard Euclidean metric $%
g_{0}$ on $\mathbb{R}^{7}$. Therefore, every $\varphi $ that is in the $%
GL\left( 7,\mathbb{R}\right) $-orbit of $\varphi _{0}$ has an \emph{%
associated} Riemannian metric $g$ that is in the $GL\left( 7,\mathbb{R}%
\right) $-orbit of $g_{0}.$ The only difference is that the stabilizer of $%
g_{0}$ (along with orientation) in this orbit is the group $SO\left(
7\right) $, whereas the stabilizer of $\varphi _{0}$ is $G_{2}\subset
SO\left( 7\right) $. This shows that positive $3$-forms forms that
correspond to the same metric, i.e., are \emph{isometric}, are parametrized
by $SO\left( 7\right) /G_{2}\cong \mathbb{RP}^{7}\cong S^{7}/\mathbb{Z}_{2}$%
. Therefore, on a Riemannian manifold, metric-compatible $G_{2}$-structures
are parametrized by sections of an $\mathbb{RP}^{7}$-bundle, or
alternatively, by sections of an $S^{7}$-bundle, with antipodal points
identified.

The \emph{intrinsic torsion }of a $G_{2}$-structure is defined by $\nabla
\varphi $, where $\nabla $ is the Levi-Civita connection for the metric $g$
that is defined by $\varphi $. Following \cite{karigiannis-2007}, we have 
\begin{subequations}
\begin{eqnarray}
\nabla _{a}\varphi _{bcd} &=&2T_{a}^{\ e}\psi _{ebcd}^{{}}  \label{codiffphi}
\\
\nabla _{a}\psi _{bcde} &=&-8T_{a[b}\varphi _{cde]}  \label{psitorsion}
\end{eqnarray}%
\end{subequations}%
where $T_{ab}$ is the \emph{full torsion tensor}, note that an additional
factor of $2$ is for convenience, and $\psi =\ast \varphi $ is the $4$-form
that is the Hodge dual of $\varphi $ with respect to the metric $g$. In
general we can split $T_{ab}$ according to irreducible representations $%
\mathbf{1},$ $\mathbf{7},$ $\mathbf{14},$ and $\mathbf{27}$ of $G_{2}$ into 
\emph{torsion components}: 
\begin{equation}
2T=\frac{1}{4}\tau _{0}g-\tau _{1}\lrcorner \varphi +\frac{1}{2}\tau _{2}-%
\frac{1}{3}\tau _{3}  \label{Tdecomp}
\end{equation}%
where $\tau _{0}$ is a function, and gives the $\mathbf{1}$ component of $T$%
. We also have $\tau _{1}$, which is a $1$-form and hence gives the $\mathbf{%
7}$ component, $\tau _{2}\ $is a $2$-form in the $\mathbf{14}$
representation, and $\tau _{3}$ is a traceless symmetric $2$-tensor, giving the $\mathbf{%
27}$ component. As shown by Karigiannis in \cite{karigiannis-2007}, the
torsion components $\tau _{i}$ relate directly to the expression for $%
d\varphi $ and $d\psi $. In fact, in our notation, 
\begin{subequations}%
\label{dptors} 
\begin{eqnarray}
d\varphi &=&\tau _{0}\psi +3\tau _{1}\wedge \varphi +\ast \mathrm{i}%
_{\varphi }\left( \tau _{3}\right)  \label{dphi} \\
d\psi &=&4\tau _{1}\wedge \psi +\ast \tau _{2}.  \label{dpsi}
\end{eqnarray}%
\end{subequations}%
Here $\mathrm{i}_{\varphi }$ is a map that takes symmetric $2$-tensors to $3$%
-forms and given a decomposable $2$-tensor $\alpha \otimes \alpha $, where $%
\alpha $ is a $1$-form,
\begin{equation*}
\mathrm{i}_{\varphi }\left( \alpha \otimes \alpha \right) =\frac{1}{3}\alpha
\wedge \left( \alpha \lrcorner \varphi \right) .
\end{equation*}%
Note that in \cite%
{GrigorianCoflow,GrigorianG2Torsion1,GrigorianSU3flow,GrigorianYau1} a
different convention for the torsion is used is used: $\tau _{1}$ in that
convention corresponds to $\frac{1}{4}\tau _{0}$ here, $\tau _{7}$
corresponds to $-\tau _{1}$ here,\ $\mathrm{i}_{\varphi }\left( \tau
_{27}\right) $ corresponds to $-\frac{1}{3}\tau _{3},$ and $\tau _{14}$
corresponds to $\frac{1}{2}\tau _{2}$. The notation used here is widely used
elsewhere in the literature.

An important special case is when the $G_{2}$-structure is torsion-free,
that is, $T=0$. This is equivalent to $\nabla \varphi =0$, and hence
torsion-free $G_{2}$-structures are also called parallel $G_{2}$-structures.
Also, by Fern\'{a}ndez and Gray \cite{FernandezGray}, this condition is also
equivalent to $d\varphi =d\psi =0$. Moreover, a $G_{2}$-structure is
torsion-free if and only if the holonomy of the corresponding metric is
contained in $G_{2}$ \cite{Joycebook}. On a compact manifold, the holonomy
group is then precisely equal to $G_{2}$ if and only if the fundamental
group $\pi _{1}$ is finite. If $d\varphi =0$, then we say $\varphi $ defines
a \emph{closed }$G_{2}$-structure. In that case, $\tau _{0}=\tau _{1}=\tau
_{3}=0$ and only $\tau _{2}$ is in general non-zero. In this case, $T=\frac{1%
}{4}\tau _{2}$ and is hence skew-symmetric. If instead, $d\psi =0$, then we
say that we have a \emph{co-closed }$G_{2}$-structure. In this case, $\tau
_{1}$ and $\tau _{2}$ vanish in (\ref{dpsi}) and we are left with $\tau _{0}$
and $\tau _{3}$ components. In particular, the torsion tensor $T_{ab}$ is
now symmetric.\ There are of course other, intermediate, torsion classes.
For example, if $\tau _{1}$ is the only non-zero torsion component, the $%
G_{2}$ structure is said to be locally conformally parallel, since it is
known \cite{CleytonIvanovConf,GrigorianG2Torsion1} that a conformal
transformation can at least locally give a parallel $G_{2}$-structure. If $%
\tau _{1}$ is exact, then a suitable conformal transformation gives a global
parallel $G_{2}$-structure.

\section{Octonion bundle}

\setcounter{equation}{0}\label{secOctoCovDiv}In \cite{GrigorianOctobundle},
the author defined the octonion bundle on a manifold with a $G_{2}$%
-structure.

\begin{definition}
Let $M$ be a smooth $7$-dimensional manifold with a $G_{2}$-structure $%
\left( \varphi ,g\right) $. The \emph{octonion bundle }$\mathbb{O}M\cong
\Lambda ^{0}\oplus TM$ on $M$ is a rank $8$ real vector bundle equipped with
an octonion product of sections given by 
\begin{equation}
A\circ _{\varphi }B=\left( 
\begin{array}{c}
ab-g\left( \alpha ,\beta \right)  \\ 
a\beta +b\alpha +\alpha \times _{\varphi }\beta 
\end{array}%
\right)   \label{octoproddef}
\end{equation}%
for any sections $A=\left( a,\alpha \right) $ and $B=\left( b,\beta \right) $%
. Here we define $\times _{\varphi }$ by $g\left( \alpha \times _{\varphi
}\beta ,\gamma \right) =\varphi \left( \alpha ,\beta ,\gamma \right) $ and
given $A\in \Gamma \left( \mathbb{O}M\right) $, we write $A=\left( \func{Re}%
A,\func{Im}A\right) .$ The metric on $TM$ is extended to $\mathbb{O}M$ to
give the octonion inner product $\left\langle A,B\right\rangle =ab+g\left(
\alpha ,\beta \right) $.
\end{definition}

The product (\ref{octoproddef}) is non-associative and the associator for $%
\circ _{\varphi }$ is given by 
\begin{eqnarray}
\left[ A,B,C\right] _{\varphi } &=&A\circ _{\varphi }\left( B\circ _{\varphi
}C\right) -\left( A\circ _{\varphi }B\right) \circ _{\varphi }C
\label{assoc} \\
&=&2\left( \psi \left( \cdot ,\alpha ,\beta ,\gamma \right) \right) ^{\sharp
}  \notag
\end{eqnarray}%
where $\alpha ,\beta ,\gamma $ are the imaginary parts of $A,B,C$ and $%
\left( \psi \left( \cdot ,\alpha ,\beta ,\gamma \right) \right) ^{\sharp }$
is the vector field obtained from the $1$-form $\psi \left( \cdot ,\alpha
,\beta ,\gamma \right) $ using the metric.

Given the octonion bundle $\mathbb{O}M$ with the octonion algebra defined by
the $G_{2}$-structure $\varphi $ with torsion tensor $T$, we can extend the
Levi-Civita connection $\nabla $ to sections of $\mathbb{O}M$. Let $A=\left(
a,\alpha \right) \in \Gamma \left( \mathbb{O}M\right) ,$ then define the
covariant derivative on $\mathbb{O}M$ as 
\begin{equation}
\nabla _{X}A=\left( \nabla _{X}a,\nabla _{X}\alpha \right)  \label{delXA}
\end{equation}%
for any $X\in \Gamma \left( TM\right) $. Then, as shown in \cite%
{GrigorianOctobundle} 
\begin{equation}
\nabla _{X}\left( AB\right) =\left( \nabla _{X}A\right) \circ _{\varphi
}B+A\circ _{\varphi }\left( \nabla _{X}B\right) -\left[ T_{X},A,B\right]
\label{nablaXAB}
\end{equation}%
where $T_{X}=\left( 0,X\lrcorner T\right) $. We can then define an adapted
octonion covariant derivative.

\begin{definition}
Define the octonion covariant derivative $D$ such for any $X\in \Gamma
\left( TM\right) ,$%
\begin{equation*}
D_{X}:\Gamma \left( \mathbb{O}M\right) \longrightarrow \Gamma \left( \mathbb{%
O}M\right)
\end{equation*}%
given by 
\begin{equation}
D_{X}A=\nabla _{X}A-A\circ _{\varphi }T_{X}  \label{octocov}
\end{equation}%
for any $A\in \Gamma \left( \mathbb{O}M\right) .$ As before, $T_{X}=\left(
0,X\lrcorner T\right) \in \Gamma \left( \func{Im}\mathbb{O}M\right) $.
\end{definition}

From now on, let us suppress $\circ _{\varphi }$ for octonion product
defined by $\varphi $. As shown in \cite{GrigorianOctobundle}, $D$ satisfies
a number of useful properties. In particular, it is metric-compatible, and
satisfies a partial product rule%
\begin{equation}
D_{X}\left( AB\right) =\left( \nabla _{X}A\right) B+A\left( D_{X}B\right) .
\label{octocovprod}
\end{equation}%
We can also see that%
\begin{equation}
D_{X}1=-T_{X}.  \label{DX1}
\end{equation}%
For a fixed vector field $X$, we have $T_{X}=\left( 0,X\lrcorner T\right)
\in \Gamma \left( \func{Im}\mathbb{O}M\right) $, so the full torsion tensor $%
T$ may now be interpreted as a $1$-form with values in $\func{Im}\mathbb{O}M$%
, that is, $T$ is a map from $\Gamma \left( TM\right) \ \ $to $\Gamma \left( 
\func{Im}\mathbb{O}M\right) $ that takes $X$ to $T_{X}$. So as in \cite%
{GrigorianOctobundle}, we will regard $T\in \Omega ^{1}\left( \func{Im}%
\mathbb{O}M\right) $.

Recall from \cite{GrigorianOctobundle}, that given a unit octonion section $V
$ on $\mathbb{O}M$ we may define a modified product on $\mathbb{O}M$:%
\begin{equation}
A\circ _{V}B=\left( AV\right) \left( V^{-1}B\right) =AB+\left[ A,B,V\right]
V^{-1}  \label{OctoVAB}
\end{equation}%
This product then induces a new $G_{2}$-structure that is compatible with
the same metric $g$ as $\varphi $ and is given by 
\begin{equation}
\sigma _{V}\left( \varphi \right) =\left( v_{0}^{2}-\left\vert v\right\vert
^{2}\right) \varphi -2v_{0}v\lrcorner \psi +2v\wedge \left( v\lrcorner
\varphi \right)   \label{sigmaAdef}
\end{equation}%
where $V=\left( v_{0},v\right) $. It was explained by Bryant in \cite%
{bryant-2003} that all $G_{2}$-structures that are isometric to $\varphi $
are given by (\ref{sigmaAdef}) for some $V$. In particular, this also gives
an explicit parametrization of $G_{2}$-structures that are compatible with $g
$ as sections of an $S^{7}/\mathbb{Z}_{2}\cong \mathbb{R}\mathbb{P}^{7}$%
-bundle over $M$. In \cite{GrigorianOctobundle} it was shown that given two
unit octonion sections $U$ and $V$, 
\begin{equation}
\sigma _{U}\left( \sigma _{V}\left( \varphi \right) \right) =\sigma
_{UV}\left( \varphi \right) .  \label{sigmaprod}
\end{equation}%
This allows to move easily between isometric $G_{2}$-structures. Moreover,
it was also shown how the torsion and hence the octonion covariant
derivative $D$ depend on the choice of $V.$

\begin{theorem}[\protect\cite{GrigorianOctobundle}]
\label{ThmTorsV}Let $M$ be a smooth $7$-dimensional manifold with a $G_{2}$%
-structure $\left( \varphi ,g\right) $ with torsion $T\in \Omega ^{1}\left( 
\func{Im}\mathbb{O}M\right) $ and corresponding octonion covariant
derivative $D$. For a unit section $V\in \Gamma \left( \mathbb{O}M\right) ,$
consider the $G_{2}$-structure $\sigma _{V}\left( \varphi \right) .$ Then,
the torsion $T^{\left( V\right) }$ of $\sigma _{V}\left( \varphi \right) $
is given by 
\begin{equation}
T^{\left( V\right) }=-\left( DV\right) V^{-1}.  \label{TorsV3}
\end{equation}%
Also, let $D^{\left( V\right) }$ be the octonion covariant derivative
corresponding to $\sigma _{V}\left( \varphi \right) .$ Then, for any
octonion section $A$, we have, 
\begin{equation}
D^{\left( V\right) }A=\left( D\left( AV\right) \right) V^{-1}.
\label{DtildeAV2}
\end{equation}
\end{theorem}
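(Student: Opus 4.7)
The plan is to prove (\ref{DtildeAV2}) first; (\ref{TorsV3}) will then follow by specializing $A = 1$. Indeed, applying (\ref{DX1}) to the new $G_{2}$-structure $\sigma_{V}(\varphi)$ gives $D^{(V)}_{X} 1 = -T^{(V)}_{X}$, while the right-hand side of (\ref{DtildeAV2}) at $A=1$ is $(D_{X} V)V^{-1}$, so $T^{(V)} = -(DV)V^{-1}$ is forced.

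To establish (\ref{DtildeAV2}), I would define $\hat{D}^{(V)}_{X} A := (D_{X}(AV))V^{-1}$ and show it coincides with $D^{(V)}_{X} A$. Expanding via the partial product rule (\ref{octocovprod}) yields $D_{X}(AV) = (\nabla_{X} A)V + A(D_{X} V)$. Right-multiplying by $V^{-1}$ produces two pieces: the first, $((\nabla_{X} A)V)V^{-1}$, simplifies to $\nabla_{X} A$ by alternativity (since $V$ and $V^{-1}$ lie in a commutative, associative subalgebra). The second, $(A(D_{X} V))V^{-1}$, I would rewrite using the identity $(CB)V^{-1} = C \circ_{V} (BV^{-1})$, which follows from the right Moufang identity $((xy)z)y = x((yz)y)$ applied with $x = CV$, $y = V^{-1}$, $z = B$, combined with $(CV)V^{-1}=C$. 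Putting these together gives
\[
\hat{D}^{(V)}_{X} A = \nabla_{X} A + A \circ_{V} \bigl((D_{X} V)V^{-1}\bigr),
\]
which has exactly the form $\nabla_{X} A - A \circ_{V} T^{(V)}_{X}$ precisely when $T^{(V)}_{X} = -(D_{X} V)V^{-1}$.

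To confirm that $\hat{D}^{(V)}$ really is the octonion covariant derivative $D^{(V)}$ for $\sigma_{V}(\varphi)$, I would verify its two characterizing properties: metric compatibility and the $\circ_{V}$-partial product rule. Metric compatibility follows from that of $D$ using the octonion pairing identity $\langle XY, Z\rangle = \langle Y, \bar{X} Z\rangle$, the relation $\overline{V^{-1}} = V$ for unit $V$, and $\langle AV, BV\rangle = \langle A, B\rangle$ (via $\langle AV,BV\rangle = \langle A, (BV)\bar{V}\rangle = \langle A, B(V\bar{V})\rangle = |V|^{2}\langle A,B\rangle$, using $2$-generator associativity in the subalgebra of $B$ and $V$). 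The partial product rule relies on the companion identity $(A \circ_{V} B)V = A(BV)$, another consequence of the right Moufang identity with $x=A$, $y=V$, $z=V^{-1}B$, which lets me write $\hat{D}^{(V)}_{X}(A \circ_{V} B) = D_{X}(A(BV))V^{-1}$, and then apply (\ref{octocovprod}) together with the identity $(CB)V^{-1} = C \circ_{V} (BV^{-1})$ to obtain $(\nabla_{X} A) \circ_{V} B + A \circ_{V} \hat{D}^{(V)}_{X} B$. The main obstacle throughout is the careful bookkeeping of non-associativity: every step where one would naively use associativity must be replaced by alternativity, $2$-generator associativity involving $V$ and $V^{-1}$, or an appeal to the Moufang identity.
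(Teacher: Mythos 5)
The paper quotes Theorem \ref{ThmTorsV} from \cite{GrigorianOctobundle} without reproducing a proof, so there is no in-paper argument to compare yours against; I will therefore assess your proposal on its own terms. Your argument is essentially correct. The two octonion identities you rely on, namely $(CB)V^{-1}=C\circ_{V}(BV^{-1})$ and $(A\circ_{V}B)V=A(BV)$, do hold (for instance, both follow quickly from the Teichm\"uller identity $[ab,c,d]-[a,bc,d]+[a,b,cd]=a[b,c,d]+[a,b,c]d$ together with $[x,V^{-1},V]=0$, once one is careful to use this paper's associator sign convention $[A,B,C]=A(BC)-(AB)C$ from (\ref{assoc})), and the expansion of $(D_{X}(AV))V^{-1}$ via (\ref{octocovprod}) is exactly as you describe. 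Deriving (\ref{TorsV3}) from (\ref{DtildeAV2}) by setting $A=1$ and using (\ref{DX1}) is clean.

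The one point you should make explicit, rather than leave as an assertion, is that metric compatibility together with the $\circ_{V}$-partial product rule genuinely \emph{characterize} $D^{(V)}$ among operators with the same $\nabla$-part. The argument is short but not vacuous: if $\hat{D}$ and $D^{(V)}$ both satisfy the product rule $D_{X}(A\circ_{V}B)=(\nabla_{X}A)\circ_{V}B+A\circ_{V}(D_{X}B)$, their difference $E_{X}$ obeys $E_{X}(A\circ_{V}B)=A\circ_{V}E_{X}B$; taking $B=1$ shows $E_{X}A=A\circ_{V}W_{X}$ with $W_{X}:=E_{X}1$. Metric compatibility then forces $\func{Re}W_{X}=0$, while the product-rule relation forces $[A,B,W_{X}]_{V}=0$ for all $A,B$, which by non-degeneracy of $\psi_{V}$ in (\ref{assoc}) forces $\func{Im}W_{X}=0$; hence $W_{X}=0$ and $\hat{D}=D^{(V)}$. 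Without this uniqueness step, verifying the two properties for $\hat{D}^{(V)}$ does not by itself conclude the proof, since a priori one only knows that $\hat{D}^{(V)}_{X}A=\nabla_{X}A-A\circ_{V}W_{X}$ for \emph{some} $W$, not that $W$ equals the torsion $T^{(V)}$ of $\sigma_{V}(\varphi)$ as defined intrinsically by (\ref{codiffphi}). With that lemma supplied, your proof is complete and, given that the original source likely proceeds by directly differentiating $\sigma_{V}(\varphi)$ from (\ref{sigmaAdef}) to read off $T^{(V)}$, yours is an attractive purely algebraic alternative.
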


We will refer to a particular choice of a $G_{2}$-structure on $M$ as a 
\emph{background }$G_{2}$-structure. Namely, given a background $G_{2}$%
-structure $\varphi $, we will write any other isometric $G_{2}$-structure
as $\sigma _{V}\left( \varphi \right) ,$ or will just refer to it as the $%
G_{2}$-structure defined by the octonion section $V$. Similarly, the
octonion derivative $D$ will be defined relative to $\varphi $ and its
torsion $T$. From (\ref{sigmaprod}) and (\ref{DtildeAV2}) we see that we can
easily change the background $G_{2}$-structure.

For some tensor bundle $\mathcal{T}$ on $M,$ define $\mathcal{T}\otimes 
\mathbb{O}M$ to be the bundle of octonion-valued tensors. Then we can extend 
$D$ to sections of $\mathcal{T}\otimes \mathbb{O}M$, and in particular we
can also define the covariant exterior derivative on sections $\Omega
^{p}\left( \mathbb{O}M\right) $ of the bundle of octonion-valued
differential forms $\left( \Lambda ^{p}T^{\ast }M\right) \otimes \mathbb{O}M$
\begin{equation}
d_{D}:\Omega ^{p}\left( \mathbb{O}M\right) \longrightarrow \Omega
^{p+1}\left( \mathbb{O}M\right) .  \label{Dextdiff}
\end{equation}%
such that 
\begin{equation}
d_{D}Q=d_{\nabla }Q-\left( -1\right) ^{p}Q\overset{\circ }{\wedge }T
\label{Dextdiff2}
\end{equation}%
where $d_{\nabla }$ is the skew-symmetrized $\nabla $ and $\overset{\circ }{%
\wedge }$ is a combination of exterior product and octonion product. Also
define the \emph{divergence }of a $p$-form \ $P$ with respect to $D$ as the $%
\left( p-1\right) $-form $\func{Div}P,$ given by 
\begin{equation}
\left( \func{Div}P\right) _{b_{2}...b_{p}}=D_{b_{1}}P_{\
b_{2}..b_{p}}^{b_{1}}.  \label{DivPdef}
\end{equation}%
In \cite{GrigorianOctobundle} we found the following properties of $T$ as a $%
\func{Im}\mathbb{O}M$-valued $1$-form

\begin{proposition}
\label{PropTorsDeriv} Suppose the octonion product on $\mathbb{O}M$ is
defined by the $G_{2}$-structure $\varphi $ with torsion $T$. Then, 
\begin{eqnarray}
d_{D}T &=&\frac{1}{4}\left( \pi _{7}\func{Riem}\right)  \label{extDT} \\
\func{Div}T &=&\left\vert T\right\vert ^{2}+\func{div}T  \label{dsT}
\end{eqnarray}%
where $\pi _{7}\func{Riem}\in \Omega ^{2}\left( \func{Im}\mathbb{O}M\right) $
$\cong \Omega ^{2}\left( TM\right) $- a vector-valued $2$-form given by $%
\left( \pi _{7}\func{Riem}\right) _{ab}^{\ \ c}=\left( \func{Riem}\right)
_{abmn}\varphi ^{mnc}.$ Also, $\func{div}T\in \Omega ^{0}\left( \func{Im}%
\mathbb{O}M\right) $ is given by\ $\left( \func{div}T\right) ^{a}=\nabla
^{b}T_{b}^{\ a}$ and $\left\vert T\right\vert ^{2}\in \Omega ^{0}\left( 
\func{Re}\mathbb{O}M\right) $ is given by $\left\vert T\right\vert
^{2}=T_{ab}T^{ab}$.
\end{proposition}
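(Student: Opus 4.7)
The plan is to handle the two identities separately, treating the divergence formula first since it is essentially a direct calculation from the definitions. By (\ref{octocov}) applied to $T$ viewed as an $\mathbb{O}M$-valued $1$-form, $(D_{a}T)_{b}=\nabla_{a}T_{b}-T_{b}\circ T_{a}$, so tracing with $g^{ab}$ yields $\func{Div}T=\func{div}T-g^{ab}T_{b}\circ T_{a}$. Since $T$ is $\func{Im}\mathbb{O}M$-valued, the octonion product of two imaginary octonions gives $T_{b}\circ T_{a}=(-\langle T_{a},T_{b}\rangle,\,T_{b}\times T_{a})$. The cross-product piece is antisymmetric in $a,b$ and vanishes against the symmetric $g^{ab}$, while the inner-product piece contributes $-|T|^{2}$ to the real part. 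Hence $-g^{ab}T_{b}\circ T_{a}=|T|^{2}\in\Omega^{0}(\func{Re}\mathbb{O}M)$, giving the divergence formula.

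For the first identity, I would start from (\ref{Dextdiff2}) applied with $p=1$ and $Q=T$, which gives
\begin{equation*}
d_{D}T=d_{\nabla}T+T\overset{\circ}{\wedge}T,
\end{equation*}
and by the same imaginary-octonion computation $(T\overset{\circ}{\wedge}T)_{ab}=T_{a}\circ T_{b}-T_{b}\circ T_{a}=2\,T_{a}\times T_{b}$, a pure imaginary octonion-valued $2$-form. To evaluate $(d_{\nabla}T)_{ab}=\nabla_{a}T_{b}-\nabla_{b}T_{a}$, I would differentiate the structure equation (\ref{codiffphi}) once more:
\begin{equation*}
\nabla_{p}\nabla_{a}\varphi_{bcd}=2(\nabla_{p}T_{a}{}^{e})\psi_{ebcd}+2T_{a}{}^{e}\nabla_{p}\psi_{ebcd},
\end{equation*}
and antisymmetrize in $p,a$. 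The left-hand side becomes $[\nabla_{p},\nabla_{a}]\varphi_{bcd}$, which by the Ricci identity is a sum of three Riemann--$\varphi$ contractions; substituting (\ref{psitorsion}) for the $\nabla\psi$ terms on the right produces an explicit quadratic-in-$T$ tensor. Contracting the resulting identity against $\psi^{ebcd}$ and applying the standard $G_{2}$ contraction identities for $\psi\cdot\psi$ and $\psi\cdot\varphi$ isolates $\nabla_{[p}T_{a]}{}^{e}$ on one side. The Riemann contribution collapses to $\tfrac{1}{4}R_{pamn}\varphi^{mne}=\tfrac{1}{4}(\pi_{7}\func{Riem})_{pa}{}^{e}$, while the quadratic torsion terms produced combine with the $T\overset{\circ}{\wedge}T$ contribution to cancel exactly, yielding the stated formula.

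The main obstacle is the $G_{2}$ representation-theoretic bookkeeping in this final step: several cubic-in-$T$ and Riemann-times-$\varphi$ expressions arise, and the clean coefficient $\tfrac{1}{4}$ emerges only after careful application of the $G_{2}$ contraction identities. A more conceptual alternative would be to use $T=-D(1)$ from (\ref{DX1}), so that $(d_{D}T)_{ab}=-[D_{a},D_{b}](1)$ is the curvature of the octonion connection $D$ evaluated on the constant section $1\in\mathbb{O}M$. The commutator $[D_{a},D_{b}]$ differs from the Riemann curvature of $\nabla$ by defect terms coming from the non-Leibniz rule (\ref{nablaXAB}) and the associator (\ref{assoc}); since $1$ associates trivially with any pair, those correction terms drop out, and what remains is precisely the $\pi_{7}$ component of Riem, delivering the identity with the correct coefficient.
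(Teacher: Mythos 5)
Your proof of the divergence identity (\ref{dsT}) is correct and complete: tracing $(D_a T)_b = \nabla_a T_b - T_b T_a$, noting that the symmetric trace kills the cross-product part and the inner-product part contributes $-|T|^2$ with a sign flip, is exactly right.

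Your treatment of (\ref{extDT}), however, has a genuine gap. The first route you sketch --- antisymmetrize a second covariant derivative of (\ref{codiffphi}), invoke the Ricci identity, substitute (\ref{psitorsion}), and contract with $\psi^{ebcd}$ --- is the correct strategy, but you stop exactly where the content of the proof lies. You acknowledge that "several cubic-in-$T$ and Riemann-times-$\varphi$ expressions arise" (they are in fact quadratic in $T$, since $\nabla\psi$ contributes one $T$ and the outer $T_a^{\ e}$ a second) and that the coefficient $\tfrac14$ "emerges only after careful application of the $G_2$ contraction identities." That is precisely the part that must be exhibited: without tracking through identities such as $\varphi_{mcd}\psi^{ebcd}$ and $\psi_{ebcd}\psi^{fbcd}$ and verifying that the $T\times T$ contribution from $T\overset{\circ}{\wedge}T$ exactly cancels the quadratic torsion terms coming from $\nabla\psi$, one has an outline but not a proof.

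Your "more conceptual alternative" does not close this gap, because within the paper's logical development it is circular. Writing $(d_D T)_{ab} = -[D_a,D_b](1)$ is a correct and suggestive reformulation, but the curvature formula for $D$ you would then invoke --- Lemma \ref{propdD2} --- is itself derived in the paper \emph{from} (\ref{extDT}): the last step of its proof explicitly substitutes (\ref{extDT}) to replace $D_a T_b - D_b T_a$ by $\tfrac14(\pi_7\func{Riem})_{ab}$. If instead you expand $[D_a,D_b](1)$ directly, using only the product rules (\ref{nablaXAB}) and (\ref{octocovprod}) together with the total antisymmetry of the associator, what you obtain is
\begin{equation*}
[D_a,D_b]A \;=\; \func{Riem}(A)_{ab} \;-\; A\,(d_D T)_{ab},
\end{equation*}
so that setting $A=1$ (for which $\func{Riem}(1)=0$) yields the tautology $[D_a,D_b](1) = -(d_D T)_{ab}$. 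No information about $\pi_7\func{Riem}$ has been produced; the identification still has to come from differentiating (\ref{codiffphi}). In short, the second approach repackages the statement rather than proving it, and the first approach must be carried through the $G_2$ contraction algebra to be a proof.
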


In particular, using Proposition \ref{PropTorsDeriv}, we can now work out
the commutator $\left[ D_{a},D_{b}\right] $ on octonion-valued tensors.

\begin{lemma}
\label{propdD2}Suppose $P\in \Gamma \left( \mathcal{T}\otimes \mathbb{O}%
M\right) $. Then, 
\begin{equation}
D_{a}D_{b}P-D_{b}D_{a}P=\func{Riem}\left( P\right) _{ab}-\frac{1}{4}P\left(
\pi _{7}\func{Riem}\right) _{ab}  \label{d2D}
\end{equation}%
where $\func{Riem}\left( P\right) $ gives the action of the Riemann
curvature endomorphism on $P$ regarded as a section of $\mathcal{T\oplus }%
\left( \mathcal{T}\otimes TM\right) .$
\end{lemma}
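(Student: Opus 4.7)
The plan is to reduce the commutator $[D_{a},D_{b}]P$ to the Levi-Civita commutator $[\nabla_{a},\nabla_{b}]P$ plus octonion-algebraic correction terms, and then recognize the corrections as exactly $-\tfrac14 P(\pi_{7}\func{Riem})_{ab}$ via the identity $d_{D}T=\tfrac14\pi_{7}\func{Riem}$ from Proposition~\ref{PropTorsDeriv}.

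First, I expand the definition $D_{a}P=\nabla_{a}P-PT_{a}$, where $T_{a}=(0,e_{a}\lrcorner T)\in\Gamma(\func{Im}\mathbb{O}M)$, and compute
\begin{equation*}
D_{a}D_{b}P=\nabla_{a}\nabla_{b}P-\nabla_{a}(PT_{b})-(\nabla_{b}P)T_{a}+(PT_{b})T_{a}.
\end{equation*}
The middle term must be expanded using the non-associative Leibniz rule (\ref{nablaXAB}), namely $\nabla_{a}(PT_{b})=(\nabla_{a}P)T_{b}+P(\nabla_{a}T_{b})-[T_{a},P,T_{b}]$. Swapping $a\leftrightarrow b$ and subtracting, I obtain
\begin{equation*}
[D_{a},D_{b}]P=[\nabla_{a},\nabla_{b}]P-P(\nabla_{a}T_{b}-\nabla_{b}T_{a})+\bigl([T_{a},P,T_{b}]-[T_{b},P,T_{a}]\bigr)+\bigl((PT_{b})T_{a}-(PT_{a})T_{b}\bigr).
\end{equation*}

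Next, I handle the octonion-algebraic terms using alternativity of the associator (so $[X,Y,Z]$ changes sign under any transposition of arguments). Alternativity gives $[T_{a},P,T_{b}]-[T_{b},P,T_{a}]=2[T_{a},P,T_{b}]=-2[P,T_{a},T_{b}]$. For the double-product term, rewriting $(PT_{b})T_{a}=P(T_{b}T_{a})-[P,T_{b},T_{a}]$ and similarly for the other piece, I get
\begin{equation*}
(PT_{b})T_{a}-(PT_{a})T_{b}=-P[T_{a},T_{b}]+2[P,T_{a},T_{b}],
\end{equation*}
where $[T_{a},T_{b}]=T_{a}T_{b}-T_{b}T_{a}$ is the octonion commutator. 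The key observation is that the two pure associator contributions cancel exactly, leaving
\begin{equation*}
[D_{a},D_{b}]P=\func{Riem}(P)_{ab}-P\bigl(\nabla_{a}T_{b}-\nabla_{b}T_{a}+[T_{a},T_{b}]\bigr).
\end{equation*}

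Finally, I recognize the quantity in parentheses as the octonion-valued $2$-form $d_{D}T$. Indeed, from (\ref{Dextdiff2}) with $p=1$, $(d_{D}T)_{ab}=(d_{\nabla}T)_{ab}+(T\overset{\circ}{\wedge}T)_{ab}=\nabla_{a}T_{b}-\nabla_{b}T_{a}+[T_{a},T_{b}]$. Substituting the formula $d_{D}T=\tfrac14\pi_{7}\func{Riem}$ from Proposition~\ref{PropTorsDeriv} yields the stated identity. The main obstacle is the careful bookkeeping with the non-associative product—in particular, spotting that the associator corrections introduced by the Leibniz rule and by the rebracketing cancel in pairs, which is what makes the final expression clean.
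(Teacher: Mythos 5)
Your proof is correct, and the overall structure matches the paper's: both reduce $[D_a,D_b]P$ to $[\nabla_a,\nabla_b]P - P(D_a T_b - D_b T_a)$ and then invoke $d_D T = \tfrac{1}{4}\pi_7\func{Riem}$ from Proposition~\ref{PropTorsDeriv}. Where you diverge is in the intermediate step. The paper applies the already-established partial product rule $D_X(AB)=(\nabla_X A)B + A(D_X B)$ from (\ref{octocovprod}) directly to the term $PT_b$, giving $D_a(PT_b)=(\nabla_a P)T_b + P(D_a T_b)$ in one line, with the associator corrections having been silently absorbed when (\ref{octocovprod}) was proved. You instead expand everything back through $\nabla$ using the raw non-associative Leibniz rule (\ref{nablaXAB}) and then track and cancel the associator contributions by hand, using alternativity. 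Your route is more elementary (it only needs (\ref{nablaXAB}) and the associator identities), and it has the merit of making the cancellation of associator terms fully explicit; the paper's route is shorter because it exploits the auxiliary lemma (\ref{octocovprod}) precisely engineered to hide that bookkeeping. Both arguments are correct and rely on the same final input $d_D T=\tfrac14\pi_7\func{Riem}$.
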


\begin{proof}
From the definition of $D$ (\ref{octocov}) as well as the product rule
property (\ref{octocovprod}), we have 
\begin{eqnarray*}
D_{a}D_{b}P &=&D_{a}\left( \nabla _{b}P-PT_{b}\right) \\
&=&\nabla _{a}\nabla _{b}P-\left( \nabla _{b}P\right) T_{a}-\left( \nabla
_{a}P\right) T_{b}-P\left( D_{a}T_{b}\right)
\end{eqnarray*}%
and hence, 
\begin{eqnarray*}
D_{a}D_{b}P-D_{b}D_{a}P &=&\nabla _{a}\nabla _{b}P-\nabla _{b}\nabla
_{a}P-P\left( D_{a}T_{b}-D_{b}T_{a}\right) \\
&=&\func{Riem}\left( P\right) _{ab}-\frac{1}{4}P\left( \pi _{7}\func{Riem}%
\right) _{ab}
\end{eqnarray*}%
where we have also used (\ref{extDT}).
\end{proof}

For convenience, we'll denote the curvature operator by $F$, so that 
\begin{equation}
F_{ab}\left( P\right) =\func{Riem}\left( P\right) _{ab}-\frac{1}{4}P\left(
\pi _{7}\func{Riem}\right) _{ab}  \label{Fabdef}
\end{equation}

Define the Laplacian operator $\Delta _{D}$ on $\mathbb{O}M$-valued tensors
as 
\begin{equation}
\Delta _{D}P=D^{a}D_{a}P  \label{lapD}
\end{equation}%
where $P\in \Gamma \left( \mathcal{T}\otimes \mathbb{O}M\right) .$ More
explicitly, this is given by 
\begin{eqnarray}
\Delta _{D}P &=&D^{a}\left( D_{a}P\right)  \notag \\
&=&D^{a}\left( \nabla _{a}P-PT_{a}\right)  \notag \\
&=&D^{a}\left( \nabla _{a}P\right) -\left( \nabla ^{a}P\right) T_{a}-P\left(
D^{a}T_{a}\right)  \notag \\
&=&\Delta P-2\left( \nabla _{a}P\right) T^{a}-P\left( \func{Div}T\right)
\label{lapD2}
\end{eqnarray}%
For a tensor product of two $\mathbb{O}M$-valued tensors, we find%
\begin{eqnarray}
\Delta _{D}\left( P\otimes Q\right) &=&D^{a}\left( \left( \nabla
_{a}P\right) \otimes Q+P\otimes \left( D_{a}Q\right) \right)  \notag \\
&=&\left( \Delta P\right) \otimes Q+2\left( \nabla _{a}P\right) \otimes
\left( D^{a}Q\right) +P\otimes \left( \Delta _{D}Q\right)  \label{lapD3}
\end{eqnarray}

We will also need to know how to commute $\Delta _{D}$ and $D$.

\begin{lemma}
\label{lemLaplComm}Suppose $P\in \Gamma \left( \mathcal{T}\otimes \mathbb{O}%
M\right) $. Then,%
\begin{eqnarray}
D_{b}\left( \Delta _{D}P\right) -\Delta _{D}\left( D_{b}P\right) &=&-2\left( 
\func{Riem}_{ab}\nabla ^{a}\right) \left( P\right) +\frac{1}{4}\left( \nabla
^{a}P\right) \left( \pi _{7}\func{Riem}\right) _{ab}  \label{DLapD} \\
&&-\func{Ric}_{bc}\nabla ^{c}P-\func{Riem}_{b}^{\ a}\left( PT_{a}\right) +%
\func{Riem}_{ab}\left( P\right) T^{a}  \notag \\
&&+\frac{1}{4}P\left( D^{a}\left( \pi _{7}\func{Riem}\right) _{ab}\right)
-\left( \func{div}\func{Riem}\right) _{b}\left( P\right) .  \notag
\end{eqnarray}%
where $\func{Riem}$ is the Riemann curvature endomorphism on an appropriate
tensor bundle.
\end{lemma}

\begin{proof}
Using (\ref{d2D}) and (\ref{octocovprod}) repeatedly, we have

\begin{eqnarray*}
D_{b}\left( \Delta _{D}P\right) 
&=&D_{b}D^{a}D_{a}P=D^{a}D_{b}D_{a}P+F_{b}^{\ a}\left( D_{a}P\right)  \\
&=&\Delta _{D}D_{b}P-D^{a}\left( F_{ab}\left( P\right) \right) +F_{b}^{\
a}\left( D_{a}P\right) 
\end{eqnarray*}%
More concretely, 
\begin{eqnarray}
D^{a}\left( F_{ab}\left( P\right) \right)  &=&D^{a}\left( \func{Riem}%
_{ab}\left( P\right) -\frac{1}{4}P\left( \pi _{7}\func{Riem}\right)
_{ab}\right)   \notag \\
&=&\left( \nabla ^{a}\func{Riem}_{ab}\right) \left( P\right) +\left( \func{%
Riem}_{ab}\nabla ^{a}\right) \left( P\right) -\func{Riem}_{ab}\left(
P\right) T^{a}  \notag \\
&&-\frac{1}{4}\left( \nabla ^{a}P\right) \left( \pi _{7}\func{Riem}\right)
_{ab}-\frac{1}{4}P\left( D^{a}\left( \pi _{7}\func{Riem}\right) _{ab}\right) 
\label{DivF} \\
F_{b}^{\ a}\left( D_{a}P\right)  &=&-\func{Riem}\left( \nabla _{a}P\right)
_{\ b}^{a}-\func{Riem}_{b}^{\ a}\left( PT_{a}\right)   \label{FD}
\end{eqnarray}%
We also have 
\begin{equation*}
\func{Riem}\left( \nabla _{a}P\right) _{\ b}^{a}=\func{Ric}_{bc}\nabla
^{c}P+\left( \func{Riem}_{\ b}^{a}\nabla _{a}\right) P
\end{equation*}%
where $\left( \func{Riem}_{\ b}^{a}\nabla _{a}\right) P$ means a composition
of operators $\nabla $ and $\func{Riem},$ both acting on sections of the
bundle $\mathcal{T\oplus }\left( \mathcal{T}\otimes TM\right) $, as opposed
to $\func{Riem}\left( \nabla _{a}P\right) _{\ b}^{a},$ where $\func{Riem}$
acts on $\nabla P$ as a section of the bundle $T^{\ast }M\otimes \left( 
\mathcal{T\oplus }\left( \mathcal{T}\otimes TM\right) \right) $. Combining
everything, we obtain (\ref{DLapD}).
\end{proof}

In (\ref{DLapD}), note that 
\begin{eqnarray}
D^{a}\left( \pi _{7}\func{Riem}\right) _{ab} &=&\nabla ^{a}\left( \func{Riem}%
_{abcd}\varphi ^{cdm}\delta _{m}\right) -\left( \pi _{7}\func{Riem}\right)
_{ab}T^{a}  \notag \\
&=&\left( \func{div}\func{Riem}\right) _{bcd}\varphi ^{cdm}\delta _{m}+2%
\func{Riem}_{abcd}T^{ae}\psi _{e}^{\ cdm}\delta _{m}  \notag \\
&&-\left( \pi _{7}\func{Riem}\right) _{ab}T^{a}  \label{Divpi7riem}
\end{eqnarray}%
where $\delta $ is the canonical $\func{Im}\mathbb{O}M$-valued $1$-form that
gives the isomorphism from $TM$ to $\func{Im}\mathbb{O}M,$ so in local
coordinates, for any value of the index $m$, $\delta _{m}$ is an imaginary
octonion. We see that any terms in (\ref{DLapD}) that do not involve
derivatives of $P$, either involve $\func{div}\func{Riem}$ or a combination
of $\func{Riem}$ and $T.$ Hence, we can schematically write 
\begin{equation}
D\left( \Delta _{D}P\right) =\Delta _{D}\left( DP\right) +\func{Riem}\ast
DP+\left( \func{div}\func{Riem}+\func{Riem}\ast T\right) \ast P
\label{DLapcomm}
\end{equation}%
where $\ast $ denotes some contraction involving $g$ and/or $\varphi $.

Consider $\left\langle \Delta _{D}P,P\right\rangle :$%
\begin{eqnarray*}
\left\langle \Delta _{D}P,P\right\rangle &=&\left\langle
D_{a}D^{a}P,P\right\rangle \\
&=&\nabla _{a}\left\langle D^{a}P,P\right\rangle -\left\vert DP\right\vert
^{2} \\
&=&\nabla _{a}\left( \nabla ^{a}\left\vert P\right\vert ^{2}-\left\langle
P,D^{a}P\right\rangle \right) -\left\vert DP\right\vert ^{2}
\end{eqnarray*}%
where we have used metric compatibility of $D$. Thus, 
\begin{equation}
\func{div}\left\langle DP,P\right\rangle =\frac{1}{2}\Delta \left\vert
P\right\vert ^{2}  \label{nablaDPP}
\end{equation}%
and hence, 
\begin{equation}
\left\langle \Delta _{D}P,P\right\rangle =\frac{1}{2}\Delta \left\vert
P\right\vert ^{2}-\left\vert DP\right\vert ^{2}.  \label{lapDPP}
\end{equation}%
In particular, for a unit octonion section $V,$ 
\begin{equation}
\left\langle \Delta _{D}V,V\right\rangle =-\left\vert DV\right\vert ^{2}.
\label{lapDVV}
\end{equation}

\section{Energy functional}

\setcounter{equation}{0}\label{secEnergy}Given a $7$-dimensional Riemannian
manifold that admit $G_{2}$-structures, we have a choice of $G_{2}$%
-structures that correspond to the given Riemannian metric $g$. As we have
seen, after fixing an arbitrary $G_{2}$-structure $\varphi $ in this metric
class, all the other $G_{2}$-structures that are compatible with $g$ are
parametrized by unit octonion sections, up to a sign. Given a unit octonion
section $V,$ the corresponding $G_{2}$-structure $\sigma _{V}\left( \varphi
\right) $ will have torsion $T^{\left( V\right) }$ given by $T^{\left(
V\right) }=-\left( DV\right) V^{-1},$ where $D$ is the octonion covariant
derivative with respect to $\varphi $. The question is how to pick the
\textquotedblleft best\textquotedblright\ representative of this metric
class. The choice of a particular $G_{2}$-structure in a fixed metric class
is similar to choosing a gauge in gauge theory. Obviously, if the metric has
holonomy contained in $G_{2},$ then the \textquotedblleft
best\textquotedblright\ representative should be a torsion-free $G_{2}$%
-structure that corresponds to that metric. On compact manifolds, a
reasonable approach would be to pick a gauge that minimizes some functional.
The natural choice is the $L^{2}$-norm of the torsion. Suppose $M$ is now
compact, in \cite{GrigorianOctobundle} the author defined the functional $%
\mathcal{E}:\Gamma \left( S\mathbb{O}M\right) \longrightarrow \mathbb{R}$ ,
where $S\mathbb{O}M$ is the unit sphere subbundle, by 
\begin{eqnarray}
\mathcal{E}\left( V\right) &=&\int_{M}\left\vert T^{\left( V\right)
}\right\vert ^{2}\func{vol} \\
&=&\int_{M}\left\vert \left( DV\right) V^{-1}\right\vert ^{2}\func{vol} \\
&=&\int_{M}\left\vert DV\right\vert ^{2}\func{vol}.  \label{eV3}
\end{eqnarray}%
This is simply the energy functional for unit octonion sections. It should
be noted that $\mathcal{E}\left( V\right) $ is independent of the choice of
the background $G_{2}$-structure and thus really only depends on the $G_{2}$%
-structure $\sigma _{V}\left( \varphi \right) $. So it may equivalently be
considered as a functional on the space of $G_{2}$-structures that are
compatible with the metric $g$. A similar energy functional for spinors has
been studied by Ammann, Weiss and Witt \cite{AmmannWeissWitt1}, however in
their case, the metric was unconstrained, and so the functional was both on
spinors and metrics.

Using the properties of $D,$ we easily obtain the critical points.

\begin{proposition}[\protect\cite{GrigorianOctobundle}]
The critical points of $\mathcal{E}$ satisfy 
\begin{equation}
\Delta _{D}V+\left\vert DV\right\vert ^{2}V=0  \label{unitvecteq}
\end{equation}%
and equivalently 
\begin{equation}
\func{div}T^{\left( V\right) }=0.  \label{divT0}
\end{equation}
\end{proposition}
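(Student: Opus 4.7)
The plan is to derive the Euler--Lagrange equation for $\mathcal{E}$ by computing the first variation on the unit octonion sphere bundle $S\mathbb{O}M$, then to re-interpret the equation geometrically using the gauge freedom afforded by Theorem \ref{ThmTorsV}.

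\medskip

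First I would take a smooth one-parameter family $V_t$ of unit octonion sections with $V_0=V$ and $\partial_t V_t|_{t=0}=W$. Differentiating $|V_t|^2=1$ yields the pointwise constraint $\langle V,W\rangle=0$. Then
\[
\frac{d}{dt}\Big|_{t=0}\mathcal{E}(V_t)=2\int_M\langle D^aW,D_aV\rangle\,\operatorname{vol}.
\]
To integrate by parts I would use the metric compatibility of $D$ recorded in the excerpt, namely $\nabla^a\langle W,D_aV\rangle=\langle D^aW,D_aV\rangle+\langle W,D^aD_aV\rangle$. On the compact manifold $M$ the divergence term vanishes, giving
\[
\frac{d}{dt}\Big|_{t=0}\mathcal{E}(V_t)=-2\int_M\langle W,\Delta_DV\rangle\,\operatorname{vol}.
\]
The constrained critical point condition is that this vanish for every octonion section $W$ with $\langle V,W\rangle=0$ pointwise. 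By a standard pointwise Lagrange-multiplier argument, $\Delta_DV$ must then be pointwise parallel to $V$, so $\Delta_DV=\lambda V$ for some function $\lambda$. Pairing with $V$ and invoking identity (\ref{lapDVV}) gives $\lambda=\langle \Delta_DV,V\rangle=-|DV|^2$, which yields (\ref{unitvecteq}).

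\medskip

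For the equivalence with $\operatorname{div}T^{(V)}=0$, I would exploit the fact that $\mathcal{E}$ and its critical point equation are independent of the choice of background $G_2$-structure. Replacing the background $\varphi$ by $\sigma_V(\varphi)$, the section $V$ becomes the identity $1\in\Gamma(\mathbb{O}M)$, the torsion becomes $T^{(V)}$, and the octonion covariant derivative becomes $D^{(V)}$. Using (\ref{DX1}) in this new gauge gives $D^{(V)}_a 1=-T^{(V)}_a$, so
\[
\Delta_{D^{(V)}}1=D^{(V),a}(-T^{(V)}_a)=-\operatorname{Div}T^{(V)},
\]
and $|D^{(V)}1|^2=|T^{(V)}|^2$. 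Thus (\ref{unitvecteq}) evaluated at $V=1$ in this gauge reads
\[
-\operatorname{Div}T^{(V)}+|T^{(V)}|^2\,1=0.
\]
Applying Proposition \ref{PropTorsDeriv} to $T^{(V)}$, we have $\operatorname{Div}T^{(V)}=\operatorname{div}T^{(V)}+|T^{(V)}|^2$, and the real and imaginary parts separate. The real part is automatic and the imaginary part collapses precisely to $\operatorname{div}T^{(V)}=0$.

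\medskip

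The main technical obstacle is bookkeeping: one must be careful that integration by parts is legitimate in the non-associative octonion setting, which relies on the identity $\langle AT^a,B\rangle+\langle A,BT^a\rangle=0$ (since $T^a$ is imaginary) underlying the metric compatibility of $D$. Once that is in hand, both the derivation of (\ref{unitvecteq}) and the gauge-change argument producing (\ref{divT0}) are essentially formal.
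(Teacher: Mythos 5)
Your proposal is correct and follows essentially the same route the paper sketches: the constrained first variation with a pointwise Lagrange multiplier yields (\ref{unitvecteq}), and the equivalence with (\ref{divT0}) rests on the identity $\Delta_D V + \left\vert DV\right\vert^2 V = -\left(\func{div} T^{(V)}\right)V$, which the paper records as (\ref{lapdivT}) immediately after the proposition. Your gauge-change derivation of that identity, evaluating the Euler--Lagrange expression at $1$ in the background $\sigma_V(\varphi)$ and using $D^{(V)}1=-T^{(V)}$ together with Proposition \ref{PropTorsDeriv}, is a clean and valid way to establish it.
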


The condition (\ref{divT0}) comes from the identity 
\begin{equation}
\Delta _{D}V+\left\vert DV\right\vert ^{2}V=-\left( \func{div}T^{\left(
V\right) }\right) V.  \label{lapdivT}
\end{equation}

We see from (\ref{divT0}) that the critical points of $\mathcal{E}$
correspond to $G_{2}$-structures that have divergence-free torsion. This
description fits very well with the interpretation of the $G_{2}$-structure
torsion as a connection for a non-associative gauge theory. The condition $%
\func{div}T=0$ is then simply the analog of the Coulomb gauge. It is
well-known (e.g. \cite{DonaldsonKronheimer,DonaldsonGauge,TaoBook}) that in
gauge theory, given some reference connection $A_{0}$, a connection $%
A=A_{0}+a$ is said to be in the Coulomb gauge relative to $A_{0}$ if $%
d_{A_{0}}^{\ast }a=0$ and $A$ is gauge equivalent to $A_{0}$. Moreover, $a$
then corresponds to critical points of the $L^{2}$-norm of $A-A_{0}$ within
the gauge group orbit of $A_{0}$. In our situation, we have a very similar
thing happening, where the Levi-Civita connection $\nabla $ plays the role
of the reference connection $A_{0}$ and $T$ has the role of $a$. The
divergence-free torsion condition can equivalently be written as $d_{\nabla
}^{\ast }T=0$.

In general, unless $DV=0$ (and hence $T^{\left( V\right) }=0$), critical
points of $\mathcal{E}$ with $\func{div}T^{\left( V\right) }=0$ will not be
local extrema of $\mathcal{E}.$

\begin{proposition}
Suppose $V\left( s,t\right) $ is a two-parameter family of unit octonion
sections, then the Hessian of $\mathcal{E}$ at a critical point is given by 
\begin{equation}
\frac{\partial ^{2}\mathcal{E}\left( V\left( s,t\right) \right) }{\partial
s\partial t}=2\int_{M}\left( \left\langle D\dot{V},DV^{\prime }\right\rangle
-\left\vert DV\right\vert ^{2}\left\langle \dot{V},V^{\prime }\right\rangle
\right) \func{vol}  \label{EsecondVar}
\end{equation}%
where $\dot{V}=\frac{\partial }{\partial t}V\left( s,t\right) $ and $%
V^{\prime }=\frac{\partial }{\partial s}V\left( s,t\right) $.
\end{proposition}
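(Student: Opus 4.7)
The plan is to compute the second variation directly by differentiating $\mathcal{E}(V(s,t)) = \int_M |DV|^2 \func{vol}$ twice, then use integration by parts together with the critical point equation \eqref{unitvecteq} to eliminate the mixed second derivative $V_{st} := \partial^2 V/\partial s \partial t$.

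First, I would differentiate once in $s$. Since $D$ is the octonion covariant derivative associated to the fixed background $G_2$-structure $\varphi$ (and therefore does not depend on the parameters $s,t$), and since $D$ is metric-compatible, I get
\begin{equation*}
\frac{\partial \mathcal{E}}{\partial s} = 2\int_M \langle DV, DV' \rangle \func{vol},
\end{equation*}
where $V' = \partial V/\partial s$. Differentiating this expression in $t$ produces two terms:
\begin{equation*}
\frac{\partial^{2} \mathcal{E}}{\partial s \partial t} = 2\int_M \langle D\dot V, DV'\rangle \func{vol} + 2\int_M \langle DV, D V_{st}\rangle \func{vol}.
\end{equation*}
The first term is already the desired shape. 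For the second term I would integrate by parts using metric-compatibility of $D$ to move one $D$ off of $V_{st}$:
\begin{equation*}
\int_M \langle DV, D V_{st}\rangle \func{vol} = -\int_M \langle \Delta_D V, V_{st}\rangle \func{vol}.
\end{equation*}

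At this point I would invoke the critical point equation \eqref{unitvecteq}, which gives $\Delta_D V = -|DV|^2 V$, so the boundary term becomes
\begin{equation*}
-\int_M \langle \Delta_D V, V_{st}\rangle \func{vol} = \int_M |DV|^2 \langle V, V_{st}\rangle \func{vol}.
\end{equation*}
The final ingredient is the unit-length constraint $|V|^2 = 1$, which by differentiating in $s$ and then $t$ yields $\langle V, V_{st}\rangle = -\langle V', \dot V\rangle$. Substituting this identity gives exactly the claimed formula \eqref{EsecondVar}.

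The only subtlety worth flagging is the use of $\langle V, V_{st}\rangle = -\langle \dot V, V'\rangle$: this is what ensures that $V_{st}$ disappears entirely from the final answer, so that the Hessian depends only on the first variations $\dot V$, $V'$ (both of which are pointwise orthogonal to $V$). No curvature terms appear because $D^2$ is applied only through its divergence form, which is exactly accounted for by integration by parts; thus no application of Lemma \ref{propdD2} is needed. The computation is essentially routine once these ingredients are lined up, and I do not anticipate a significant obstacle.
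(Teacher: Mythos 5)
Your proof is correct and reaches the formula by the same underlying mechanism as the paper: the mixed second derivative $V_{st}$ disappears precisely because the Euler--Lagrange equation $\Delta_D V + |DV|^2 V = 0$ holds at the critical point. The paper packages this into a Lagrange-multiplier argument: it writes $\mathcal{E}(V) = \int_M (|DV|^2 - \lambda(|V|^2 - 1))\,\mathrm{vol}$ with $\lambda = |DV|^2$ at the critical point, takes the first variation once, and then differentiates again holding $\lambda$ fixed, appealing implicitly to the standard fact that the first variation of the Lagrangian against the direction $V_{st}$ vanishes at a critical point. You instead differentiate the constrained functional $\int_M |DV|^2\,\mathrm{vol}$ directly, keep the $\int\langle DV, DV_{st}\rangle$ term explicit, integrate by parts, substitute the critical-point equation, and then use the constraint identity $\langle V, V_{st}\rangle = -\langle \dot V, V'\rangle$ to finish. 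In effect you unpack what the Lagrange-multiplier formalism encodes implicitly, which makes the cancellation mechanism visible; the paper's version is more compact but leaves the reader to verify that the $V_{st}$-dependent terms drop out. Both are valid, and your observation that no curvature terms or commutator identities (Lemma \ref{propdD2}) are needed is accurate, since $\Delta_D$ is only used through the divergence/integration-by-parts identity, not by commuting derivatives.
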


\begin{proof}
To enforce the condition $\left\vert V\right\vert ^{2}=1,$ we may rewrite $%
\mathcal{E}$ as a functional on $\Gamma \left( \mathbb{O}M\right) $ with a
Lagrange multiplier $\lambda $:%
\begin{equation*}
\mathcal{E}\left( V\right) =\int_{M}\left( \left\vert DV\right\vert
^{2}-\lambda \left( \left\vert V\right\vert ^{2}-1\right) \right) \func{vol}
\end{equation*}%
where $\lambda =\left\vert DV\right\vert ^{2}$ at a critical point. From 
\cite{GrigorianOctobundle} we know that the first variation is given by 
\begin{eqnarray*}
\frac{\partial }{\partial t}\mathcal{E}\left( V\left( s,t\right) \right)
&=&\int_{M}\left( \frac{\partial }{\partial t}\left\vert DV\left( s,t\right)
\right\vert ^{2}-\lambda \frac{\partial }{\partial t}\left\vert V\left(
s,t\right) \right\vert ^{2}\right) \func{vol} \\
&=&2\int_{M}\left( \left\langle D\frac{\partial }{\partial t}V\left(
s,t\right) ,DV\left( s,t\right) \right\rangle -\lambda \left\langle V\left(
s,t\right) ,\frac{\partial }{\partial t}V\left( s,t\right) \right\rangle
\right) \func{vol}
\end{eqnarray*}%
and hence, the second variation is 
\begin{eqnarray*}
\frac{\partial ^{2}\mathcal{E}\left( V\left( s,t\right) \right) }{\partial
s\partial t} &=&2\int_{M}\left( \left\langle D\frac{\partial }{\partial t}%
V\left( s,t\right) ,D\frac{\partial }{\partial s}V\left( s,t\right)
\right\rangle +\left\langle D\frac{\partial ^{2}}{\partial s\partial t}%
V\left( s,t\right) ,DV\left( s,t\right) \right\rangle \right. \\
&&\left. -\lambda \left\langle \frac{\partial }{\partial s}V\left(
s,t\right) ,\frac{\partial }{\partial t}V\left( s,t\right) \right\rangle
-\lambda \left\langle V\left( s,t\right) ,\frac{\partial ^{2}}{\partial
s\partial t}V\left( s,t\right) \right\rangle \right) \func{vol} \\
&=&2\int_{M}\left( \left\langle D\frac{\partial }{\partial t}V\left(
s,t\right) ,D\frac{\partial }{\partial s}V\left( s,t\right) \right\rangle
-\lambda \left\langle \frac{\partial }{\partial s}V\left( s,t\right) ,\frac{%
\partial }{\partial t}V\left( s,t\right) \right\rangle \right. \\
&&\left. -\left\langle \frac{\partial ^{2}}{\partial s\partial t}V\left(
s,t\right) ,\Delta _{D}V\left( s,t\right) +\lambda V\left( s,t\right)
\right\rangle \right) \func{vol},
\end{eqnarray*}%
where we have integrated by parts. However, at a critical point $\lambda
=\left\vert DV\right\vert ^{2}$ and (\ref{unitvecteq}) is satisfied, hence
the second derivative term vanishes and at a critical point obtain (\ref%
{EsecondVar}).
\end{proof}

The characterization of divergence-free torsion as corresponding to critical
points of the functional $\mathcal{E}$ shows that $G_{2}$-structures with
such torsion are in some sense special. On the other hand, it is quite a
broad class of $G_{2}$-structures. In \cite{GrigorianOctobundle}, a Dirac
operator $\NEG{D}$ was defined on the octonion bundle. For an octonion
section $V$, nn local coordinates it is given by $\NEG{D}V=\delta ^{a}\circ
_{\varphi }\left( D_{a}V\right) $, where $\delta $ is the canonical $\func{Im%
}\mathbb{O}M$-valued $1$-form as defined in Section \ref{secOctoCovDiv} and $%
\circ _{\varphi }$ is the octonion product defined by the $G_{2}$-structure $%
\varphi $. This definition is analogous to the standard definition on
spinors using Clifford multiplication. It was then shown that unit norm
eigensections of $\NEG{D}$ are critical points of $\mathcal{E}.$ These
correspond to $G_{2}$-structures with torsion that have constant $\tau _{0}$
and vanishing $\tau _{1}$, but with arbitrary $\tau _{2}$ and $\tau _{3}$.
An almost complementary set of $G_{2}$-structures also yields
divergence-free torsion - these are locally conformally parallel $G_{2}$%
-structures with $\tau _{0}=\tau _{2}=\tau _{3}=0$ and $\tau _{1}\neq 0$.
Overall, we have the following.

\begin{theorem}
\label{thmDivT0}Suppose $\varphi $ is a $G_{2}$-structure on a $7$%
-dimensional manifold, with torsion $T$ and components of torsion $\tau
_{0},\tau _{1},\tau _{2},\tau _{3}$. Then, $\func{div}T=0$ if one of the
following holds:

\begin{enumerate}
\item $\tau _{0}$ is constant and $\tau _{1}=0$ and arbitrary $\tau _{2}$
and $\tau _{3}$

\item $\tau _{0}=\tau _{2}=\tau _{3}=0$ and arbitrary $\tau _{1}$
\end{enumerate}
\end{theorem}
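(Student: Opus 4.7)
My plan is to handle the two cases by complementary strategies, both grounded in a direct computation of $(\func{div}T)_b = \nabla^a T_{ab}$ using the torsion decomposition (\ref{Tdecomp}) and the structure equations (\ref{dphi})--(\ref{dpsi}), supplemented by the Bianchi-type identities coming from $d^2\varphi = 0$ and $d^2\psi = 0$.

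I would start with Case 2, as it admits a clean self-contained calculation. Here the structure equations reduce to $d\varphi = 3\tau_1\wedge\varphi$ and $d\psi = 4\tau_1\wedge\psi$. Applying $d$ to each and using $\tau_1\wedge\tau_1 = 0$ yields $d\tau_1\wedge\varphi = 0$ and $d\tau_1\wedge\psi = 0$. Using the characterizations of $\Lambda^2_7$ and $\Lambda^2_{14}$ by their behavior under wedging with $\varphi$ and $\psi$, these two identities together force $d\tau_1 = 0$. Since $T_{ab} = -\tfrac{1}{2}\tau_1^c\varphi_{cab}$, the divergence expands as
\begin{equation*}
(\func{div}T)_b = -\tfrac{1}{2}(\nabla^a\tau_1^c)\varphi_{cab} - \tfrac{1}{2}\tau_1^c\nabla^a\varphi_{cab}.
\end{equation*}
The first term vanishes because $d\tau_1 = 0$ makes $\nabla\tau_1$ symmetric while $\varphi_{cab}$ is antisymmetric in $(a,c)$. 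For the second, substituting (\ref{codiffphi}) gives $\nabla^a\varphi_{cab} = -\tau_1^h\varphi_h^{\ ad}\psi_{dcab}$, reducing it to $\tfrac{1}{2}\tau_1^c\tau_1^h\varphi_h^{\ ad}\psi_{dcab}$. The rank-three tensor $\varphi_h^{\ ad}\psi_{dcab}$ is $G_2$-invariant, and since the only $G_2$-invariant $(0,3)$-tensor on $\mathbb{R}^7$ is a scalar multiple of $\varphi_{hcb}$, this contraction equals $\lambda\varphi_{hcb}$ for some constant $\lambda$; then the symmetric $\tau_1^c\tau_1^h$ paired with the antisymmetric $\varphi_{hcb}$ kills the expression.

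For Case 1 the scalar term $\tfrac{1}{8}\nabla_b\tau_0$ vanishes by hypothesis. The $\tau_2$ contribution is proportional to $\nabla^a\tau_{2ab} = -(d^*\tau_2)_b$, which vanishes because $d\psi = \ast\tau_2$ and $d^2\psi = 0$ imply $d\ast\tau_2 = 0$. For the $\tau_3$ contribution I would apply $d^2\varphi = 0$ to $d\varphi = \tau_0\psi + \ast i_\varphi(\tau_3)$; after substituting $d\psi = \ast\tau_2$, this gives $d\ast i_\varphi(\tau_3) = -\tau_0\ast\tau_2$, equivalently $d^*i_\varphi(\tau_3) = \tau_0\tau_2\in\Lambda^2_{14}$. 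Hence the $\Lambda^2_7$-component of $d^*i_\varphi(\tau_3)$ vanishes. The final step is to identify this $\Lambda^2_7$-component with a nonzero constant multiple of $(\func{div}\tau_3)\lrcorner\varphi$, which follows by expanding $\nabla^a(i_\varphi\tau_3)_{abc}$ via the defining formula for $i_\varphi$, using $G_2$-contraction identities, and verifying that the torsion-correction terms from $\nabla\varphi$ (which involve only the $\tau_0$, $\tau_2$, $\tau_3$ components of $T$ here) preserve the $\Lambda^2_{14}$-projection. This gives $\func{div}\tau_3 = 0$ and therefore $(\func{div}T)_b = 0$.

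The main obstacle is the representation-theoretic identification in Case 1 of $\pi_7(d^*i_\varphi(\tau_3))$ with $(\func{div}\tau_3)\lrcorner\varphi$, which requires careful tracking of $G_2$-contraction identities and the coupling of $\tau_3$ to the rest of $T$ through $\nabla\varphi$. A more conceptual route that sidesteps this obstacle is to invoke the classical Friedrich--Ivanov characterization: the unit section $V\equiv 1$ representing the background $G_2$-structure is an eigensection of the Dirac operator on $\mathbb{O}M$ precisely when $\tau_0$ is constant and $\tau_1 = 0$, and since unit Dirac eigensections were shown in \cite{GrigorianOctobundle} to be critical points of $\mathcal{E}$, the identity (\ref{lapdivT}) immediately produces $\func{div}T = 0$.
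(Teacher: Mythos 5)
For Case 2 your argument matches the paper's: the paper cites Fern\'andez--Gray for $d\tau_1 = 0$, whereas you derive it directly from $d^2\varphi = d^2\psi = 0$; and for the final contraction the paper invokes the explicit identity $\varphi_{ead}\psi^{adb}_{\ \ \ c} = 4\varphi_{ce}^{\ \ b}$ while you invoke the representation-theoretic fact that the trivial representation occurs exactly once in $\mathbf{7}^{\otimes 3}$, spanned by $\varphi$. Both land on the same conclusion, that the contraction against the symmetric tensor $\tau_1^c\tau_1^e$ vanishes, so your version is a sound conceptual substitute for the numerical identity.

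For Case 1 the paper does not give a computational proof at all: it cites Prop.\ 10.5 of \cite{GrigorianOctobundle}, which is exactly the ``alternative route'' you offer at the end, namely that $\tau_0$ constant and $\tau_1=0$ characterizes unit Dirac eigensections, these are critical points of $\mathcal{E}$, and (\ref{lapdivT}) gives $\func{div}T^{(V)}=0$. That fallback is therefore the paper's actual argument and is fine. Your primary route through $d^2\varphi=0$ has a genuine gap, which you yourself flag: you need to identify $\pi_7\bigl(d^*i_\varphi(\tau_3)\bigr)$ with a \emph{nonzero} constant multiple of $(\func{div}\tau_3)\lrcorner\varphi$, up to terms lying in $\Lambda^2_{14}$. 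This is plausible but not established. Expanding $\nabla^a(i_\varphi\tau_3)_{abc}$ produces, via (\ref{codiffphi}), cross terms in which all three of $\tau_0,\tau_2,\tau_3$ couple to $\tau_3$ through $\nabla\varphi$, and it is not a priori clear that these drop into $\Lambda^2_{14}$; verifying this requires the same careful $G_2$-contraction bookkeeping that you would be trying to avoid. Moreover, you would still need the overall constant to be nonzero, which your representation-theoretic shortcut from Case 2 does not supply here because $\func{div}\tau_3$ is not contracted against anything symmetric. So as written, the primary Case 1 argument is a sketch of a computation rather than a proof; the Dirac-eigensection route is what actually closes the case, and it is the one the paper uses.
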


\begin{proof}
The condition 1 is proved in \cite[Prop. 10.5]{GrigorianOctobundle}. For
condition 2, recall from \cite{FernandezGray}, that if $\tau _{0}=\tau
_{2}=\tau _{3}=0$, then $d\tau _{1}=0$. Then, from (\ref{Tdecomp}) and (\ref%
{codiffphi}), we have 
\begin{eqnarray*}
\left( \func{div}T\right) ^{b} &=&-\nabla _{a}\left( \tau _{1}^{c}\varphi
_{\ \ \ c}^{ab}\right) =-\tau _{1}^{c}\nabla _{a}\varphi _{\ \ \ c}^{ab} \\
&=&2\tau _{1}^{c}T_{ad}\psi _{\ \ \ \ \ c}^{adb}=-\tau _{1}^{c}\tau
_{1}^{e}\varphi _{ead}\psi _{\ \ \ \ \ c}^{adb} \\
&=&-4\tau _{1}^{c}\tau _{1}^{e}\varphi _{ce}^{\ \ \ b}=0.
\end{eqnarray*}
\end{proof}

\section{Heat flow}

\setcounter{equation}{0}\label{secHeat}In general, however, we don't know if
the functional $\mathcal{E}$ has any critical points for a given metric.
However, another approach, that has been successful in the study of harmonic
maps and in Yang-Mills theory is to consider the negative gradient
flow of $\mathcal{E}$. This gives the following initial value problem for a
time-dependent unit octonion section $V\left( t\right) \in \Gamma \left( S%
\mathbb{O}M\right) $:

\begin{equation}
\left\{ 
\begin{array}{c}
\frac{\partial V}{\partial t}=\Delta _{D}V+\left\vert DV\right\vert ^{2}V \\ 
V\left( 0\right) =V_{0},%
\end{array}%
\right.   \label{heatflow}
\end{equation}%
which was introduced in \cite{GrigorianOctobundle}. Here $D$ is defined with
respect to some background $G_{2}$-structure $\varphi $ with torsion $T$.
This will be unambiguous because time-dependent $G_{2}$-structure along the
flow will be denoted by $\varphi _{V}=\sigma _{V}\left( \varphi \right) \ $%
with torsion $T^{\left( V\right) }$ and Hodge dual $4$-form $\psi _{V}$.
Although initially we have to make a choice of background $\varphi ,$ we
find that the flow is actually invariant under a change of the background $%
G_{2}$-structure. Indeed, suppose $\tilde{\varphi}=\sigma _{U}\left( \varphi
\right) ,$ for some unit octonion section $U$, then from (\ref{sigmaprod}), 
\begin{equation}
\sigma _{V}\left( \varphi \right) =\sigma _{VU^{-1}}\left( \sigma _{U}\left(
\varphi \right) \right) =\sigma _{VU^{-1}}\left( \tilde{\varphi}\right) .
\end{equation}%
Moreover, from (\ref{DtildeAV2}), 
\begin{equation}
D^{\left( U\right) }\left( VU^{-1}\right) =\left( DV\right) U^{-1}
\label{DUVUinv}
\end{equation}%
where $D^{\left( U\right) }$ is the covariant derivative defined with
respect to $\tilde{\varphi}$. Now, consider the corresponding Laplacian $%
\Delta _{D^{\left( U\right) }}$: 
\begin{eqnarray}
\Delta _{D^{\left( U\right) }}\left( VU^{-1}\right)  &=&-\left( D^{\left(
U\right) }\right) ^{\ast }D^{\left( U\right) }\left( VU^{-1}\right)   \notag
\\
&=&-\left( D^{\left( U\right) }\right) ^{\ast }\left( \left( DV\right)
U^{-1}\right)   \notag \\
&=&\left( \Delta _{D}V\right) U^{-1}  \label{lapDUV}
\end{eqnarray}%
where we have applied (\ref{DUVUinv}) twice. Hence, if we set $W=VU^{-1}$,
we find that (\ref{heatflow}) is equivalent to 
\begin{equation}
\left\{ 
\begin{array}{c}
\frac{\partial W}{\partial t}=\Delta _{D^{\left( U\right) }}W+\left\vert
D^{\left( U\right) }W\right\vert ^{2}W \\ 
W\left( 0\right) =V_{0}U^{-1}%
\end{array}%
\right. .
\end{equation}%
Therefore, we can always change the background $G_{2}$-structure as
convenient.

The flow (\ref{heatflow}) is clearly parabolic and by standard parabolic
theory, therefore has short-time existence and uniqueness. In \cite%
{Bagaglini1,DGKisoflow}, this flow was reformulated explicitly in terms of the
imaginary part of $V$ and was explicitly shown to be parabolic as a PDE on
vector fields.

\begin{theorem}
There exists an $\varepsilon >0$ such that there exists a unique solution of
(\ref{heatflow}) on $M\times \lbrack 0,\varepsilon )$.
\end{theorem}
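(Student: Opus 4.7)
The plan is to treat (\ref{heatflow}) as a semilinear parabolic PDE on sections of the rank $8$ real vector bundle $\mathbb{O}M$, solve it by standard parabolic theory, and then verify a posteriori that the unit norm constraint is preserved, so that the resulting solution lies in $\Gamma(S\mathbb{O}M)$.

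First I would expand the principal operator using (\ref{lapD2}). Since $\Delta_{D}V = \Delta V - 2(\nabla_{a}V)T^{a} - V(\func{Div}T)$, the equation reads
\[
\frac{\partial V}{\partial t} = \Delta V + N(V,\nabla V),
\]
where $\Delta$ is the rough Laplacian on sections of $\mathbb{O}M$, which is strictly elliptic with scalar principal symbol $|\xi|^{2}$, and the nonlinearity $N(V,\nabla V) = -2(\nabla_{a}V)T^{a} - V(\func{Div}T) + |DV|^{2}V$ is polynomial in $V$ and at most quadratic in $\nabla V$, with smooth coefficients determined by the fixed background $G_{2}$-structure and its torsion $T$. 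This puts (\ref{heatflow}) into the class of semilinear strictly parabolic systems on the compact manifold $M$. Short-time existence and uniqueness of a smooth solution $V \in C^{\infty}(M \times [0,\varepsilon); \mathbb{O}M)$ then follow from standard parabolic theory, for example by running a Banach fixed-point argument on the Duhamel integral $V(t) = e^{t\Delta}V_{0} + \int_{0}^{t} e^{(t-s)\Delta} N(V(s),\nabla V(s))\,ds$ in a parabolic H\"{o}lder space $C^{2+\alpha,1+\alpha/2}$, and bootstrapping regularity in the usual way.

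It remains to show that if $V_{0}$ is a unit section then $|V(t)| \equiv 1$ on $M \times [0,\varepsilon)$. Set $u = |V|^{2} - 1$, so that $u(0) \equiv 0$. Using identity (\ref{lapDPP}) applied to $P = V$, namely $\langle \Delta_{D}V, V\rangle = \tfrac{1}{2}\Delta|V|^{2} - |DV|^{2}$, we compute
\[
\frac{\partial u}{\partial t} = 2\langle V, \Delta_{D}V\rangle + 2|DV|^{2}|V|^{2} = \Delta u - 2|DV|^{2} + 2|DV|^{2}(1+u) = \Delta u + 2|DV|^{2}\,u.
\]
This is a linear parabolic equation for the scalar $u$ with smooth, bounded coefficient $2|DV|^{2}$ on $M \times [0,\varepsilon/2]$ and vanishing initial data. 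The maximum principle (or uniqueness for this linear equation) forces $u \equiv 0$, so $V(t) \in \Gamma(S\mathbb{O}M)$ throughout.

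The main potential obstacle is simply the bookkeeping of the standard short-time existence machinery applied to a bundle-valued semilinear parabolic equation rather than any geometric subtlety specific to $G_{2}$-structures: the identity (\ref{lapDPP}) from Section \ref{secOctoCovDiv} handles the algebraic constraint $|V|=1$ automatically, and the background-independence noted immediately before the theorem means the particular choice of $\tilde{\varphi}$ in the argument is immaterial.
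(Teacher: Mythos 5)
Your proof is correct, and it elaborates on a point the paper essentially delegates: the paper asserts parabolicity and cites \cite{Bagaglini1}, where the flow is reformulated as a PDE on the vector field $v=\func{Im}V$ (with $f=\pm\sqrt{1-|v|^{2}}$), thereby eliminating the unit-norm constraint before appealing to parabolic theory. You take the complementary route: solve the \emph{unconstrained} semilinear system on the full rank-$8$ bundle $\mathbb{O}M$, then propagate the constraint. Your expansion via (\ref{lapD2}) correctly exhibits the leading part as the rough Laplacian, the nonlinearity is at most quadratic in $\nabla V$, and the constraint computation using (\ref{lapDPP}) is right:
\begin{equation*}
\frac{\partial u}{\partial t}=\Delta u+2\left\vert DV\right\vert ^{2}u,\qquad u=|V|^{2}-1,\ u(0)\equiv 0,
\end{equation*}
so linear parabolic uniqueness (or the maximum principle) gives $u\equiv 0$ on the existence interval. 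What your route buys is that it works directly on the natural bundle, avoids the branch choice for $f$ and the degeneracy of the $v$-formulation near $|v|=1$, and mirrors the standard treatment of the harmonic map heat flow into a sphere, so it is arguably cleaner and more robust than the approach the paper gestures at. One minor presentational note: you should say explicitly that the coefficient $2|DV|^{2}$ in the $u$-equation is a priori merely continuous and bounded on $M\times[0,\varepsilon']$ for any $\varepsilon'<\varepsilon$ (by the regularity just established), which is all that the scalar maximum principle requires; and that once $|V|\equiv 1$ is known, uniqueness in the unconstrained class in particular gives uniqueness among unit sections, which is the statement being claimed.
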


From (\ref{lapdivT}), an equivalent way of writing the flow (\ref{heatflow})
is 
\begin{equation}
\frac{\partial V}{\partial t}=-\left( \func{div}T^{\left( V\right) }\right)
V.  \label{DVdtdivT}
\end{equation}%
Moreover, as an evolution equation for $\varphi _{V}\left( t\right) =\sigma
_{V\left( t\right) }\left( \varphi \right) $, it can also be rewritten as 
\begin{equation}
\frac{\partial \varphi _{V}\left( t\right) }{\partial t}=2\left( \func{div}%
T\left( t\right) \right) \lrcorner \psi _{V}\left( t\right) 
\end{equation}%
which we can obtain from the following simple lemma.

\begin{lemma}
Suppose a one-parameter family of unit octonion sections $V\left( t\right) $
satisfies the evolution equation 
\begin{equation}
\frac{\partial V}{\partial t}=-QV  \label{VQflow}
\end{equation}%
for some time-dependent sections $Q\left( t\right) \in \Gamma \left( \func{Im%
}\mathbb{O}M\right) .$ Then, the corresponding $G_{2}$-structure $3$-forms $%
\varphi _{V}\left( t\right) =\sigma _{V\left( t\right) }\left( \varphi
\right) $ satisfy the evolution equation%
\begin{equation}
\frac{\partial \varphi _{V}\left( t\right) }{\partial t}=2Q\left( t\right)
\lrcorner \psi _{V}\left( t\right) .  \label{dphiQ}
\end{equation}%
and the torsion $T^{\left( V\right) }$ satisfies 
\begin{equation}
\frac{\partial T^{\left( V\right) }}{\partial t}=\nabla Q\left( t\right)
+2T^{\left( V\right) }\times _{V}Q\left( t\right)   \label{dTdt}
\end{equation}%
where $\times _{V}$ is the cross-product defined by the $G_{2}\,$-structure $%
\varphi _{V}\left( t\right) $.
\end{lemma}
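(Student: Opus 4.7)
The plan is to verify the two claims separately, using the results established earlier in the paper.

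For the evolution \eqref{dphiQ} of $\varphi(t)$, I would exploit the group-action property \eqref{sigmaprod}: $\sigma_{UV}(\tilde\varphi)=\sigma_U(\sigma_V(\tilde\varphi))$. Writing $V(t+\epsilon)=W(\epsilon)V(t)$ with $W(\epsilon)=V(t+\epsilon)V(t)^{-1}$, one has $W(0)=1$ and $W'(0)=\dot V(t)V(t)^{-1}=-Q(t)$, which lies in $\Gamma(\mathrm{Im}\,\mathbb OM)$ (as it must, since unit-norm preservation forces $\dot V V^{-1}$ to be purely imaginary). Then
\[
\varphi(t+\epsilon)=\sigma_{W(\epsilon)}(\varphi(t)),
\]
so the time derivative reduces to the infinitesimal variation of $\sigma_W(\varphi(t))$ at $W=1$. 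Applying the explicit formula \eqref{sigmaAdef} to $W(\epsilon)=(1,-\epsilon Q)+O(\epsilon^2)$, the first two terms $(v_0^2-|v|^2)\varphi$ and $2v\wedge(v\lrcorner\varphi)$ contribute only at order $\epsilon^2$, while the middle term $-2v_0 v\lrcorner\psi$ contributes $2\epsilon\,Q\lrcorner\psi(t)$ at first order. Differentiating and sending $\epsilon\to 0$ yields \eqref{dphiQ}. The only subtlety is that $\psi$ in the formula refers to $\psi(t)$, because we are expanding $\sigma_W$ around the current $G_2$-structure $\varphi(t)$, not around $\tilde\varphi$; this is legitimate by the background-invariance discussion preceding the lemma.

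For the evolution \eqref{dTdt} of $T^{(V)}$, I would differentiate the identity $T^{(V)}=-(DV)V^{-1}$ from Theorem \ref{ThmTorsV}. Since $VV^{-1}=1$, one has $\partial_t V^{-1}=-V^{-1}\dot V V^{-1}$, so
\[
\frac{\partial T^{(V)}}{\partial t}=-(D\dot V)V^{-1}+(DV)V^{-1}\dot V V^{-1}.
\]
Substituting $\dot V=-QV$ with $Q$ imaginary, and applying the partial product rule \eqref{octocovprod} to $D(QV)=(\nabla Q)V+Q(DV)$, one gets
\[
-(D\dot V)V^{-1}=((\nabla Q)V)V^{-1}+(Q(DV))V^{-1},
\]
while the second piece becomes $-T^{(V)}(QV)V^{-1}\cdot(-1)=T^{(V)}(QV)V^{-1}$. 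Using alternativity of the octonions in each fiber (so that any subalgebra generated by $V$ and $V^{-1}$ together with one more element is associative), $((\nabla Q)V)V^{-1}=\nabla Q$, and rewriting the remaining products via $(ab)V^{-1}=a(bV^{-1})+[a,b,V^{-1}]$ produces a main term $\nabla Q+[T^{(V)},Q]$ plus associator remainders $[Q,DV,V^{-1}]+[T^{(V)},QV,V^{-1}]$.

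The final and trickiest step is to identify the sum of these associators with $2T^{(V)}\times_V Q-[T^{(V)},Q]$. From the definition \eqref{OctoVAB} of $\circ_V$ together with the formula $A\circ_V B=AB+[A,B,V]V^{-1}$, one sees that for imaginary octonion sections
\[
2\,T^{(V)}\times_V Q=[T^{(V)},Q]_V=[T^{(V)},Q]+2\,[T^{(V)},Q,V]V^{-1},
\]
using the total antisymmetry of the associator (valid in an alternative algebra). Substituting $DV=-T^{(V)}V$ to rewrite $[Q,DV,V^{-1}]=-[Q,T^{(V)}V,V^{-1}]$, and applying the standard alternative-algebra identities (skew-symmetry of $[\cdot,\cdot,\cdot]$ and the Moufang/linearized-associator relations) reduces $[Q,DV,V^{-1}]+[T^{(V)},QV,V^{-1}]$ to $2[T^{(V)},Q,V]V^{-1}$. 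Combining with the $\nabla Q+[T^{(V)},Q]$ piece yields \eqref{dTdt}. I expect this last associator bookkeeping to be the main obstacle, but it is purely algebraic and fiberwise, depending only on the alternativity of $\mathbb OM$.
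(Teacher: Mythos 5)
For the first identity \eqref{dphiQ}, your route is correct but genuinely different from the paper's. You use the composition law \eqref{sigmaprod} to write $\varphi(t+\epsilon)=\sigma_{W(\epsilon)}(\varphi(t))$ with $W=1-\epsilon Q+O(\epsilon^{2})$, and read off the linearization directly from the explicit formula \eqref{sigmaAdef}, noting that only the $-2v_{0}\,v\lrcorner\psi$ term contributes at order $\epsilon$. The paper instead differentiates the modified product $A\circ_{V(t)}B=(AV)(\bar V B)$ for fixed $A,B$, simplifies to the associator $[A,Q,B]_{V}$, and invokes the relation \eqref{assoc} between the associator and $\psi$. Both are valid one-paragraph proofs; yours is more of a direct Taylor expansion of \eqref{sigmaAdef}, the paper's makes the algebraic mechanism (the associator encoding $\psi$) explicit. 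Either serves.

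For the second identity \eqref{dTdt}, there is a genuine gap. After using the partial product rule on $D(QV)$ and alternativity to simplify $((\nabla Q)V)V^{-1}=\nabla Q$, you are left having to show that the accumulated associator terms match $2[T^{(V)},Q,V]V^{-1}$; you explicitly flag this step as unverified. Moreover the intermediate expressions are sensitive to bracketing (e.g.\ $(T^{(V)}(QV))V^{-1}$, $T^{(V)}((QV)V^{-1})$ and $-(DV)(V^{-1}Q)$ differ by associators, so the assertion that ``the second piece becomes $T^{(V)}(QV)V^{-1}$'' is already an unjustified rebracketing), and the identity $(ab)V^{-1}=a(bV^{-1})+[a,b,V^{-1}]$ carries the wrong sign under the paper's convention \eqref{assoc}. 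The paper avoids all of this by using Theorem \ref{ThmTorsV}: from \eqref{DtildeAV2}, the first term is recognized immediately as $(D(QV))V^{-1}=D^{(V)}Q$, and from \eqref{OctoVAB} together with $T^{(V)}V=-DV$ (right alternativity) the second term is $-(DV)(V^{-1}Q)=(T^{(V)}V)(V^{-1}Q)=T^{(V)}\circ_{V}Q$. Then since $D^{(V)}_{a}Q=\nabla_{a}Q-Q\circ_{V}T^{(V)}_{a}$ and both $Q$ and $T^{(V)}$ are imaginary,
\begin{equation*}
D^{(V)}Q+T^{(V)}\circ_{V}Q=\nabla Q+\bigl(T^{(V)}\circ_{V}Q-Q\circ_{V}T^{(V)}\bigr)=\nabla Q+2T^{(V)}\times_{V}Q,
\end{equation*}
which is \eqref{dTdt}. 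I would replace your associator bookkeeping with this shorter route; it also removes the need to prove any Moufang-type identity. A further small cleanup: rather than $\partial_{t}V^{-1}=-V^{-1}\dot V V^{-1}$ (which needs a specific bracketing to be meaningful), it is safer to compute $\partial_{t}V^{-1}=\partial_{t}\bar V=\overline{-QV}=-\bar V\bar Q=V^{-1}Q$ directly using $\bar Q=-Q$.
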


\begin{proof}
We can extract $\frac{\partial \varphi _{V}\left( t\right) }{\partial t}$ by
considering what happens to the modified product $\circ _{V\left( t\right) }$%
 (\ref{OctoVAB}). Let $A$ and $B$ be two fixed octonions, then 
\begin{eqnarray*}
\frac{\partial }{\partial t}\left( A\circ _{V\left( t\right) }B\right)  &=&%
\frac{\partial }{\partial t}\left( \left( AV\right) \left( \bar{V}B\right)
\right)  \\
&=&\left( A\frac{\partial V}{\partial t}\right) \left( \bar{V}B\right)
+\left( AV\right) \left( \frac{\partial \bar{V}}{\partial t}B\right) 
\end{eqnarray*}%
where we have used $V^{-1}=\bar{V}$ since $V$ is a unit octonion. Using (\ref%
{VQflow}) and $\frac{\partial \bar{V}}{\partial t}=\bar{V}Q,$ we then obtain%
\begin{eqnarray*}
\frac{\partial }{\partial t}\left( A\circ _{V\left( t\right) }B\right) 
&=&-\left( A\left( QV\right) \right) \left( \bar{V}B\right) +\left(
AV\right) \left( \left( \bar{V}Q\right) B\right)  \\
&=&-\left( \left( A\circ _{V}Q\right) V\right) \left( \bar{V}B\right)
+\left( AV\right) \left( \bar{V}\left( Q\circ _{V}B\right) \right)  \\
&=&-\left( A\circ _{V}Q\right) \circ _{V}B+A\circ _{V}\left( Q\circ
_{V}B\right)  \\
&=&\left[ A,Q,B\right] _{V}
\end{eqnarray*}%
where we have again used the definition (\ref{OctoVAB}) of $\circ _{V}$ and $%
\left[ \cdot ,\cdot ,\cdot \right] _{V}$ is the associator with respect to $%
\circ _{V}$. Using the relationship (\ref{assoc}) between the associator and 
$\psi $, we obtain (\ref{dphiQ}).

Similarly, 
\begin{eqnarray}
\frac{\partial T^{\left( V\right) }}{\partial t} &=&-\frac{\partial \left(
\left( DV\right) V^{-1}\right) }{\partial t}=\left( D\left( QV\right)
\right) V^{-1}-\left( DV\right) \left( V^{-1}Q\right)  \notag \\
&=&D^{\left( V\right) }Q+T^{\left( V\right) }\circ _{V}Q  \label{DTV1} \\
&=&\nabla Q+2T^{\left( V\right) }\times _{V}Q.
\end{eqnarray}
\end{proof}

By definition of the negative gradient flow, the energy functional $\mathcal{%
E}$ is decreasing along the flow (\ref{heatflow}) whenever $\func{div}T\neq
0 $. More precisely, $\mathcal{E}\left( t\right) $ satisfies the following
equation, which follows immediately from (\ref{dTdt}) with $Q=\func{div}T$.

\begin{lemma}
\label{lemfuncevol}Along the flow (\ref{heatflow}), the functional $\mathcal{%
E}$ satisfies 
\begin{subequations}
\begin{eqnarray}
\frac{d\mathcal{E}}{dt} &=&-2\int_{M}\left\vert \func{div}T^{\left( V\right)
}\right\vert ^{2}\func{vol}  \label{DEdt} \\
\frac{d^{2}\mathcal{E}}{dt^{2}} &=&4\int_{M}\left( \left\vert D^{\left(
V\right) }\left( \func{div}T^{\left( V\right) }\right) \right\vert
^{2}-\left\vert T^{\left( V\right) }\right\vert ^{2}\left\vert \func{div}%
T^{\left( V\right) }\right\vert ^{2}\right) \func{vol}  \label{D2Edt}
\end{eqnarray}%
\end{subequations}%
where we regard $\func{div}T^{\left( V\right) }$ as sections of $\func{Im}%
\mathbb{O}M$ and $T^{\left( V\right) }\in \Omega ^{1}\left( \func{Im}\mathbb{%
O}M\right) .$ The norm $\left\vert \cdot \right\vert $ is obtained by
extending the metric to $\Omega ^{1}\left( \mathbb{O}M\right) $.
\end{lemma}

\begin{proof}
Using (\ref{dTdt}) with $Q=\func{div}T^{\left( V\right) }$, we have 
\begin{eqnarray*}
\frac{d\mathcal{E}}{dt} &=&2\int_{M}\left\langle T^{\left( V\right) },\frac{%
\partial T^{\left( V\right) }}{\partial t}\right\rangle \func{vol} \\
&=&2\int_{M}\left( \left\langle T^{\left( V\right) },\nabla Q\right\rangle
+2\left\langle T^{\left( V\right) },T^{\left( V\right) }\times
_{V}Q\right\rangle \right) \func{vol} \\
&=&-2\int_{M}\left\vert Q\right\vert ^{2}\func{vol}
\end{eqnarray*}%
where the second term in the second line vanishes by symmetry considerations
and the first term is integrated by parts.

Using (\ref{DTV1}), and suppressing $\circ _{V}$, we have 
\begin{eqnarray}
\frac{d^{2}\mathcal{E}}{dt^{2}} &=&-4\int_{M}\left\langle Q,\func{div}\left( 
\frac{\partial T^{\left( V\right) }}{\partial t}\right) \right\rangle \func{%
vol}  \notag \\
&=&-4\int_{M}\left\langle Q,\func{div}\left( D^{\left( V\right) }Q+T^{\left(
V\right) }Q\right) \right\rangle \func{vol}  \notag \\
&=&4\int_{M}\left\langle \nabla Q,D^{\left( V\right) }Q+T^{\left( V\right)
}Q\right\rangle \func{vol}  \notag \\
&=&4\int_{M}\left\langle D^{\left( V\right) }Q+QT^{\left( V\right)
},D^{\left( V\right) }Q+T^{\left( V\right) }Q\right\rangle \func{vol}  \notag
\\
&=&4\int_{M}\left( \left\vert D^{\left( V\right) }Q\right\vert
^{2}+\left\langle QT^{\left( V\right) }+T^{\left( V\right) }Q,D^{\left(
V\right) }Q\right\rangle \right.  \\
&&\left. +\left\langle QT^{\left( V\right) },T^{\left( V\right)
}Q\right\rangle \right) \func{vol}.  \notag
\end{eqnarray}%
Note that $Q$ and $T^{\left( V\right) }$ are both imaginary octonions, so $%
QT^{\left( V\right) }+T^{\left( V\right) }Q$ only has a real part. On the
other hand, in 
\begin{equation*}
D^{\left( V\right) }Q=\nabla Q-QT^{\left( V\right) }\text{,}
\end{equation*}%
the derivative term $\nabla Q$ is pure imaginary so the real part comes from 
$QT^{\left( V\right) }$. Hence%
\begin{equation*}
\left\langle QT^{\left( V\right) }+T^{\left( V\right) }Q,D^{\left( V\right)
}Q\right\rangle =-\left\langle QT^{\left( V\right) }+T^{\left( V\right)
}Q,QT^{\left( V\right) }\right\rangle .
\end{equation*}%
Thus, overall, 
\begin{equation*}
\frac{d^{2}\mathcal{E}}{dt^{2}}=4\int_{M}\left( \left\vert D^{\left(
V\right) }Q\right\vert ^{2}-\left\vert QT^{\left( V\right) }\right\vert
^{2}\right) \func{vol}.
\end{equation*}%
However, note that more explicitly, we can write 
\begin{eqnarray*}
\left\vert QT^{\left( V\right) }\right\vert ^{2} &=&g^{ab}\left\langle
QT_{a}^{\left( V\right) },QT_{b}^{\left( V\right) }\right\rangle  \\
&=&g^{ab}\left\langle \bar{Q}\left( QT_{a}^{\left( V\right) }\right)
,T_{b}^{\left( V\right) }\right\rangle =g^{ab}\left\langle \left( \bar{Q}%
Q\right) T_{a}^{\left( V\right) },T_{b}^{\left( V\right) }\right\rangle  \\
&=&\left\vert Q\right\vert ^{2}\left\vert T^{\left( V\right) }\right\vert
^{2}
\end{eqnarray*}%
and hence we obtain (\ref{D2Edt}).
\end{proof}

\begin{remark}
To work out the second derivative, we could alternatively use (\ref{dTdt})
to obtain 
\begin{eqnarray}
\frac{d^{2}\mathcal{E}}{dt^{2}} &=&-4\int_{M}\left\langle Q,\func{div}\left(
\nabla Q+2T^{\left( V\right) }\times _{V}Q\right) \right\rangle \func{vol} 
\notag \\
&=&4\int_{M}\left\vert \nabla Q\right\vert ^{2}+2\left\langle \nabla
Q,T^{\left( V\right) }\times _{V}Q\right\rangle \func{vol}.
\end{eqnarray}%
This is then essentially the same expression that appears in Lemma 5.10 in 
\cite{DGKisoflow}.\ 
\end{remark}

In \cite{DGKisoflow}, the second derivative of $\mathcal{E}$ was estimated
using the first non-zero eigenvalue of the Laplacian on vector fields as
long as the pointwise norm square of the torsion was sufficiently small.
From (\ref{D2Edt}), we can say that 
\begin{equation}
\frac{d^{2}\mathcal{E}}{dt^{2}}\geq 4\lambda _{1}\left( V\right)
\int_{M}\left\vert \func{div}T^{\left( V\right) }\right\vert ^{2}\func{vol}
\end{equation}%
where $\lambda _{1}\left( V\right) $ is lowest (non-zero) eigenvalue of the
operator $H_{V}=-\Delta _{D^{\left( V\right) }}-\left\vert T^{\left(
V\right) }\right\vert ^{2}.$ By compactness of $M$, this operator clearly
has a discrete spectrum. Also, from (\ref{DX1}) and (\ref{dsT}), we see that 
$\func{div}T^{\left( V\right) }=-H_{V}\left( 1\right) $, and hence, $\func{%
div}T^{\left( V\right) }$ is $L^{2}$-orthogonal to the kernel of $H_{V}$.
The operator $-\Delta _{D^{\left( V\right) }}$ has a non-negative spectrum
that is independent of $V,$ which can be seen from the covariance property (%
\ref{lapDUV}), however $H_{V}$ will in general have a spectrum that depends
on $V,$ and does not have to be non-negative. On the other hand, if $%
\left\vert T^{\left( V\right) }\right\vert ^{2}$ is less than first non-zero
eigenvalue of $-\Delta _{D^{\left( V\right) }}$, then $\lambda _{1}\left(
V\right) $ will be positive, and thus we obtain an analogue of the estimate
from \cite{DGKisoflow}.

\begin{corollary}
\label{corrDivTdecay}Let $\lambda >0$ be first non-zero eigenvalue of the
operator $-\Delta _{D}.$ Then, whenever $\left\vert T^{\left( V\right)
}\right\vert ^{2}=\left\vert DV\right\vert ^{2}\leq \frac{1}{2}\lambda $, 
\begin{equation}
\frac{d}{dt}\int_{M}\left\vert \func{div}T^{\left( V\right) }\right\vert ^{2}%
\func{vol}\leq -\lambda \int_{M}\left\vert \func{div}T^{\left( V\right)
}\right\vert ^{2}\func{vol}.
\end{equation}
\end{corollary}

We will adapt the techniques introduced by Shi for the Ricci flow \cite%
{ShiEstimate}, that were later used in \cite{GraysonHamilton} for the
harmonic map heat flow and in \cite{LotayWei1} for the Laplacian flow of
closed $G_{2}$-structures, to prove estimates for a finite time blow-up for
the flow (\ref{heatflow}). Let us introduce the quantity 
\begin{equation}
\Lambda \left( x,t\right) =\left\vert DV\left( x,t\right) \right\vert ^{2}.
\label{lambdaxt}
\end{equation}%
Of course, from (\ref{TorsV3}), we see that $\Lambda \left( x,t\right)
=\left\vert T^{\left( V\right) }\left( x,t\right) \right\vert ^{2}.$

At every $t\in \mathbb{R}$ for which (\ref{heatflow}) is defined, let us
also define 
\begin{equation}
\Lambda \left( t\right) =\sup_{x\in M}\Lambda \left( x,t\right) .
\label{lambdat}
\end{equation}%
Let $\Lambda \left( 0\right) =\Lambda _{0}$ be the maximal initial energy
density, and equivalently the maximal initial pointwise norm squared of the
torsion tensor $\sup_{x\in M}\left\vert T^{\left( V\right) }\left(
x,0\right) \right\vert $.

Our main result in this section is the following:

\begin{theorem}
\label{thmMain}Suppose $V\left( t\right) $ is a solution to (\ref{heatflow})
on a finite maximal time interval $[0,t_{\max })$. Then 
\begin{equation}
\lim_{t\longrightarrow t_{\max }^{-}}\Lambda \left( t\right) =\infty
\label{limLambdainf}
\end{equation}%
and moreover, 
\begin{equation}
\Lambda \left( t\right) \geq \frac{1}{2\left( t_{\max }-t\right) }-C_{0}
\label{lamdbatineq}
\end{equation}%
where $C_{0}>0$ depends on the curvature and torsion of the background $%
G_{2} $-structure.
\end{theorem}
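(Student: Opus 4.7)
The plan is to derive a differential inequality of reaction-diffusion type for $\Lambda = |DV|^2$, apply the parabolic maximum principle, and integrate the resulting ODI. Since the background $G_{2}$-structure is fixed, $D$ and $\Delta_{D}$ are time-independent, so $\partial_{t}(DV) = D\,\partial_{t}V = D(\Delta_{D}V + \Lambda V)$. Using the commutator identity (3.15) and the product rule (3.7), I would expand
\begin{equation*}
\partial_{t}(DV) = \Delta_{D}(DV) + \func{Riem}\ast DV + \bigl(\func{div}\func{Riem} + \func{Riem}\ast T\bigr)\ast V + (\nabla \Lambda)\,V + \Lambda\,DV.
\end{equation*}
Taking the $\mathbb{O}M$-inner product with $2\,DV$, identity (3.19) with $P = DV$ converts the Laplacian term into $\Delta\Lambda - 2|D(DV)|^{2}$, the $\Lambda\,DV$ term contributes $2\Lambda^{2}$, and the crucial cancellation
\begin{equation*}
\langle DV,\,(\nabla\Lambda)V\rangle = (\nabla^{a}\Lambda)\langle D_{a}V,V\rangle = \tfrac{1}{2}(\nabla^{a}\Lambda)\nabla_{a}|V|^{2} = 0
\end{equation*}
follows from $|V| = 1$ and metric-compatibility of $D$. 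By Cauchy--Schwarz, Young's inequality, and compactness of $M$ (which bounds $\func{Riem}$, $\func{div}\func{Riem}$, and the background torsion), the remaining curvature terms are controlled by $R_{1}\Lambda + R_{2}$, and dropping the nonpositive $-2|D(DV)|^{2}$ yields
\begin{equation*}
\partial_{t}\Lambda \leq \Delta \Lambda + 2\Lambda^{2} + R_{1}\Lambda + R_{2}.
\end{equation*}

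By the parabolic maximum principle on the compact manifold $M$, the function $\Lambda(t) := \sup_{x\in M}\Lambda(x,t)$ satisfies the ODI $\frac{d\Lambda}{dt} \leq 2\Lambda^{2} + R_{1}\Lambda + R_{2}$ in the upper Dini-derivative sense; once $\Lambda(t) \geq M_{0}$ for a threshold $M_{0}$ depending only on $R_{1}, R_{2}$, this is dominated by $3\Lambda^{2}$. To prove (\ref{limLambdainf}), I argue by contradiction: if $\Lambda(t)$ stayed bounded on $[0, t_{\max})$, then by the higher-order derivative estimates proved next in this section (items 2 and 3 of the introduction) all covariant derivatives of $V$ would remain uniformly bounded, and $V(t)$ would converge smoothly to some limit $V(t_{\max})$; the short-time existence result applied to this limit would then extend the flow beyond $t_{\max}$, contradicting maximality. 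Hence $\Lambda(t) \to \infty$ as $t \to t_{\max}^{-}$. For (\ref{lamdbatineq}), once $\Lambda(t) \geq M_{0}$ the ODI rewrites as $-\frac{d}{dt}\Lambda(t)^{-1} \leq 3$; integrating from $t$ to any $s \in (t, t_{\max})$ and sending $s \to t_{\max}^{-}$ (so that $\Lambda(s)^{-1} \to 0$ by (\ref{limLambdainf})) yields $\Lambda(t)^{-1} \leq 3(t_{\max} - t)$, hence $\Lambda(t) \geq \frac{1}{3(t_{\max}-t)}$. A subsequent reduction of the constant $C$ accommodates any initial interval on which $\Lambda$ is below $M_{0}$.

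The main technical obstacle is the first step: carefully tracking every curvature-and-torsion correction term produced by commuting $D$ with $\Delta_{D}$ on the octonion-valued $1$-form $DV$, and in particular recognising the cancellation of the $(\nabla\Lambda)V$ term, which would otherwise produce a gradient term in $\Lambda$ that destroys the reaction-diffusion structure. Once the differential inequality is established, the remaining arguments -- parabolic maximum principle, ODE comparison, and the bootstrap-to-extension argument -- are standard, with the latter relying on derivative estimates proved in the remainder of Section \ref{secHeat}.
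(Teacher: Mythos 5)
Your proposal matches the paper's proof essentially step for step: both derive the reaction--diffusion inequality $\partial_{t}|DV|^{2}\leq\Delta|DV|^{2}+2|DV|^{4}+\text{(curvature terms)}$ from the commutator formula for $D$ with $\Delta_{D}$ (the paper uses the same $\langle(\nabla\Lambda)V,DV\rangle=0$ cancellation implicitly in passing from (\ref{dDVdt}) to (\ref{dDVsqnocurv})), then prove (\ref{limLambdainf}) by contradiction via the higher-derivative bounds of Theorem \ref{thmDkVest} plus short-time re-extension, and obtain (\ref{lamdbatineq}) by integrating the resulting ODI for $\Lambda(t)^{-1}$. The approach, key lemmas, and logical structure are the same.
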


The above theorem in particular shows that as long as $\Lambda \left(
t\right) ,$ and equivalently $\left\vert T^{\left( V\right) }\left(
x,t\right) \right\vert $, is bounded, a solution to (\ref{heatflow}) will
exist. To prove Theorem \ref{thmMain}, we will use the following strategy:

\begin{enumerate}
\item We will work out the evolution of $DV$ and hence $\left\vert
DV\right\vert ^{2}$ in Lemma \ref{lemDVevol}. From the Maximum Principle,
this will also give an upper bound for $\Lambda \left( t\right) $ in Theorem %
\ref{thmLambdaest}.

\item We will obtain the evolution of $\left\vert
D^{2}V\right\vert ^{2}$ and $\left\vert
D^{3}V\right\vert ^{2}$, and then in Theorem \ref{thmDkVest}, by induction we will obtain
bounds on $\left\vert D^{k}V\right\vert ^{2}$ in terms of $\Lambda .$

\item These bounds will then be used to show that whenever $\Lambda \left(
t\right) $ is finite, the flow $V\left( t\right) $ may be smoothly extended
further. This will then prove (\ref{limLambdainf}).
\end{enumerate}

In the estimates that follow, we will use $\ast $ to denote any multilinear
contraction that involves $g,g^{-1},\varphi ,\psi ,$ and we will drop
irrelevant constant factors. Sometimes we will generically use $C$ for a
constant, which may denote a different constant in different places.

\begin{lemma}
\label{lemDVevol}Along the flow (\ref{heatflow}), \ $\left\vert
DV\right\vert ^{2}\ $evolves as%
\begin{eqnarray}
\frac{\partial \left( \left\vert DV\right\vert ^{2}\right) }{\partial t}
&=&\Delta _{D}\left\vert DV\right\vert ^{2}-2\left\vert D^{2}V\right\vert
^{2}+2\left\vert DV\right\vert ^{4}-4\func{Riem}_{\ b\ n}^{a\ m}\left\langle
\left( \nabla _{a}v^{n}\right) \delta _{m},D^{b}V\right\rangle   \notag \\
&&-2\func{Ric}_{bc}\left\langle \nabla ^{b}V,D^{c}V\right\rangle +\frac{1}{2}%
\left\langle \left( \nabla ^{a}V\right) \left( \pi _{7}\func{Riem}\right)
_{ab},D^{b}V\right\rangle   \notag \\
&&-2\left( \func{div}\func{Riem}\right) _{b\ n}^{\ m}v^{n}\left\langle
\delta _{m},D^{b}V\right\rangle +\frac{1}{2}\left\langle V\func{Div}\left(
\pi _{7}\func{Riem}\right) _{b},D^{b}V\right\rangle   \notag \\
&&-2\left\langle \func{Riem}_{ab}\left( V\right) T^{a}-\func{Riem}_{b}^{\
a}\left( VT_{a}\right) ,D^{b}V\right\rangle   \label{ddvsqevol}
\end{eqnarray}%
where $v=\func{Im}V$. Moreover, the evolution of $\left\vert DV\right\vert
^{2}$ satisfies the following inequality 
\begin{equation}
\frac{\partial \left\vert DV\right\vert ^{2}}{\partial t}\leq \Delta
\left\vert DV\right\vert ^{2}-2\left\vert D^{2}V\right\vert ^{2}+2\left(
\left\vert DV\right\vert ^{4}+2R_{1}\left\vert DV\right\vert
^{2}+R_{2}\left\vert DV\right\vert \right)   \label{dDVsqdt}
\end{equation}%
where $R_{1}$ is a constant multiple of $\sup_{M}\left\vert \func{Riem}%
\right\vert $ and $R_{2}$ is a linear combination of $\sup_{M}\left\vert 
\func{div}\func{Riem}\right\vert $ and $\sup_{M}\left\vert T\right\vert
\left\vert \func{Riem}\right\vert $.
\end{lemma}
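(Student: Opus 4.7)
The plan is to differentiate the flow \eqref{heatflow} once by $D_{b}$, pair the result against $D^{b}V$, and then invoke the commutator identity \eqref{DLapD} together with \eqref{lapDPP}. Once \eqref{ddvsqevol} is in hand, the pointwise inequality \eqref{dDVsqdt} follows by bounding each curvature term via Cauchy--Schwarz and eliminating $\nabla V$ through the relation $\nabla V = DV + V\tilde{T}$.

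Because the background $G_{2}$-structure $\tilde{\varphi}$ (and hence $\tilde T$ and $D$) is time-independent, $\partial_{t}(D_{b}V) = D_{b}(\partial_{t}V)$. Substituting \eqref{heatflow} and using the product rule $D_{b}(fW) = (\nabla_{b}f)W + f\,D_{b}W$ valid for a scalar $f$ and an octonion section $W$, one obtains
\[
D_{b}(\partial_{t}V) \;=\; D_{b}(\Delta_{D}V) + (\nabla_{b}|DV|^{2})\,V + |DV|^{2}\,D_{b}V.
\]
Pairing with $2D^{b}V$ and using $\langle D^{b}V, V\rangle = \tfrac{1}{2}\nabla^{b}|V|^{2} = 0$ (from $|V|\equiv 1$ and metric compatibility of $D$) kills the middle term and produces the $+2|DV|^{4}$ contribution. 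The remaining piece $2\langle D^{b}V,\, D_{b}\Delta_{D}V\rangle$ is then treated using \eqref{DLapD} with $P=V$, which writes $D_{b}\Delta_{D}V$ as $\Delta_{D}D_{b}V$ plus seven explicit curvature-linear terms. The principal summand $2\langle D^{b}V,\, \Delta_{D}D_{b}V\rangle$ is handled by \eqref{lapDPP} applied with $P=DV$, producing $\Delta|DV|^{2} - 2|D^{2}V|^{2}$. The remaining seven terms from \eqref{DLapD}, paired with $2D^{b}V$ and using \eqref{DivPdef} to identify $D^{a}(\pi_{7}\func{Riem})_{ab} = \func{Div}(\pi_{7}\func{Riem})_{b}$, match one-to-one the seven curvature terms listed in \eqref{ddvsqevol}, giving the claimed evolution equation.

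For the inequality \eqref{dDVsqdt} one applies Cauchy--Schwarz termwise. The three terms containing $\nabla V$ are converted via $|\nabla V|\le |DV|+|\tilde{T}|$ into contributions bounded by $C|\func{Riem}|\bigl(|DV|^{2} + |\tilde{T}||DV|\bigr)$. The remaining four terms (those involving $\func{div}\func{Riem}$, $\func{Div}(\pi_{7}\func{Riem})$, or products of $\func{Riem}$ and $\tilde{T}$) contribute at most $C\bigl(|\func{div}\func{Riem}| + |\func{Riem}||\tilde{T}|\bigr)|DV|$. Taking $R_{1}$ proportional to $\sup_{M}|\func{Riem}|$ and $R_{2}$ proportional to $\sup_{M}\bigl(|\func{div}\func{Riem}| + |\func{Riem}||\tilde{T}|\bigr)$ and absorbing universal constants yields \eqref{dDVsqdt}. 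The main difficulty is purely bookkeeping: one must track signs and index positions carefully when matching the seven terms of \eqref{DLapD} with those of \eqref{ddvsqevol}, remembering that $\func{Riem}$ acts on octonion-valued objects only through the tensor-bundle factor, while $\pi_{7}\func{Riem}$ multiplies $V$ from the right via the octonion product.
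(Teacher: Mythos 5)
Your proposal is correct and follows essentially the same route as the paper: differentiate the flow with $D$, pair against $2D^{b}V$ killing the $(\nabla|DV|^{2})V$ term via $\langle V, D^{b}V\rangle=0$, handle $D\Delta_{D}V$ with the commutator formula \eqref{DLapD}, reduce $\langle\Delta_{D}DV,DV\rangle$ via \eqref{lapDPP}, and then obtain \eqref{dDVsqdt} by Cauchy--Schwarz and $\nabla V = DV + V\tilde{T}$, which is exactly what the paper's citation of \eqref{DLapcomm} encodes. You supply more bookkeeping detail in matching the seven curvature terms and in the final estimate, but the underlying argument is identical.
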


\begin{proof}
We have $V$ satisfying the flow 
\begin{equation}
\frac{\partial V}{\partial t}=\Delta _{D}V+\left\vert DV\right\vert ^{2}V
\end{equation}%
and hence, using Lemma \ref{lemLaplComm}, 
\begin{eqnarray}
\frac{\partial \left( DV\right) }{\partial t} &=&D\left( \frac{\partial V}{%
\partial t}\right) =D\left( \Delta _{D}V+\left\vert DV\right\vert
^{2}V\right)  \notag \\
&=&D\left( \Delta _{D}V\right) +\left( \nabla \left\vert DV\right\vert
^{2}\right) V+\left\vert DV\right\vert ^{2}DV  \notag \\
&=&\Delta _{D}\left( DV\right) +\func{Riem}\ast DV+\left( \func{div}\func{%
Riem}+\func{Riem}\ast T\right) \ast V  \label{dDVdt} \\
&&+\left( \nabla \left\vert DV\right\vert ^{2}\right) V+\left\vert
DV\right\vert ^{2}DV.  \notag
\end{eqnarray}%
Moreover, now, 
\begin{eqnarray}
\frac{\partial \left\vert DV\right\vert ^{2}}{\partial t} &=&2\left\langle 
\frac{\partial \left( DV\right) }{\partial t},DV\right\rangle  \notag \\
&=&2\left\langle D\left( \Delta _{D}V\right) ,DV\right\rangle +2\left\vert
DV\right\vert ^{4}.  \label{dDVsqnocurv}
\end{eqnarray}%
Using (\ref{lapDPP}), we then obtain (\ref{ddvsqevol}). The inequality (\ref%
{dDVsqdt}) follows immediately using (\ref{DLapcomm}).
\end{proof}

\begin{corollary}
If the background $G_{2}$-structure $\varphi $ is torsion-free, then $%
\left\vert \nabla V\right\vert ^{2}$ satisfies the following evolution
equation%
\begin{equation}
\frac{\partial \left( \left\vert \nabla V\right\vert ^{2}\right) }{\partial t%
}=\Delta \left\vert \nabla V\right\vert ^{2}-2\left\vert \nabla
^{2}V\right\vert ^{2}+2\left\vert \nabla V\right\vert ^{4}+4\func{Riem}%
\left( \nabla v,\nabla v\right)  \label{ddvstfree}
\end{equation}%
where $\func{Riem}\left( \nabla v,\nabla v\right) =\func{Riem}_{abcd}\left(
\nabla ^{a}v^{c}\right) \left( \nabla ^{b}v^{d}\right) $ for $v=\func{Im}V$.
\end{corollary}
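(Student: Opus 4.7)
The plan is to deduce this as a direct specialization of Lemma \ref{lemDVevol}. When the background $G_2$-structure $\tilde\varphi$ is torsion-free, $\tilde T = 0$, so the octonion covariant derivative $D$ reduces to the ordinary Levi-Civita covariant derivative $\nabla$ by (\ref{octocov}), and $\Delta_D = \Delta$. Consequently $|DV|^2 = |\nabla V|^2$ and $|D^2 V|^2 = |\nabla^2 V|^2$. Thus starting from (\ref{ddvsqevol}), every occurrence of $D$ simply becomes $\nabla$, and the final group of terms involving $\tilde T^a$ drops out entirely. The main task is then to verify that the remaining curvature terms in (\ref{ddvsqevol}) collapse to the single expression $4\,\func{Riem}(\nabla v,\nabla v)$ claimed in (\ref{ddvstfree}).

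For this, I would invoke three well-known consequences of holonomy being contained in $G_2$. First, any metric with holonomy in $G_2$ is Ricci-flat (Bonan), so the term $-2\func{Ric}_{bc}\langle \nabla^b V, \nabla^c V\rangle$ vanishes. Second, by the contracted second Bianchi identity, Ricci-flatness immediately forces $\func{div}\func{Riem}=0$, killing the $\func{div}\func{Riem}$ term. Third, holonomy in $G_2$ implies that the Riemann tensor, viewed in its first pair of indices, takes values in $\mathfrak{g}_2 \cong \Lambda^2_{14}$; since $(\pi_7 \func{Riem})_{ab}{}^c = \func{Riem}_{abmn}\varphi^{mnc}$ precisely projects the $mn$-indices onto $\Lambda^2_7$, we obtain $\pi_7\func{Riem}=0$, eliminating both remaining $\pi_7\func{Riem}$ terms. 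These three identities together erase all curvature contributions except the single term $-4\func{Riem}^{a\ m}_{\ b\ n}\langle(\nabla_a v^n)\delta_m, \nabla^b V\rangle$.

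Finally I would simplify that surviving term. Since the Riemann endomorphism acts trivially on the scalar (real) component of $V$, only the imaginary part $v$ contributes, and after lowering indices the term reads $-4 R^{a\ m}_{\ b\ n}(\nabla_a v^n)(\nabla^b v_m)$. Applying the antisymmetry $R_{pbqn}=-R_{pbnq}$ (or equivalently the pair-swap $R_{abcd}=R_{cdab}$ together with the usual antisymmetries) converts this to $+4 R_{abcd}(\nabla^a v^c)(\nabla^b v^d) = 4\func{Riem}(\nabla v,\nabla v)$, giving exactly (\ref{ddvstfree}).

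Since each of these steps is either a direct substitution or a standard identity, I do not expect any real obstacle; the only mild subtlety is confirming the signs/index conventions match when translating $-4\func{Riem}^{a\ m}_{\ b\ n}\langle(\nabla_a v^n)\delta_m, \nabla^b V\rangle$ into the more symmetric form $4\func{Riem}(\nabla v, \nabla v)$, which is a matter of bookkeeping with the Riemann tensor symmetries rather than a genuine difficulty.
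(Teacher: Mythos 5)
Your proof is correct and follows essentially the same route as the paper: set $T=0$ so that $D=\nabla$, observe that torsion-free implies $\pi_7\func{Riem}=0$, $\func{Ric}=0$, and $\func{div}\func{Riem}=0$, and then simplify the one surviving curvature term in (\ref{ddvsqevol}). The paper derives $\func{Ric}=0$ and $\func{div}\func{Riem}=0$ as consequences of $\pi_7\func{Riem}=0$, whereas you invoke Bonan's theorem and the contracted Bianchi identity directly, but these are just different attributions of the same standard facts; your sign bookkeeping for the final term $-4\func{Riem}^{a\ m}_{\ b\ n}(\nabla_a v^n)(\nabla^b v_m) = 4\func{Riem}_{abcd}(\nabla^a v^c)(\nabla^b v^d)$ is also correct.
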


\begin{proof}
If $T=0$, then $D=\nabla .$ Also then $\pi _{7}\func{Riem}=0$, and hence $%
\func{Ric}=0,$ and similarly $\func{div}\func{Riem}=0$. Then, (\ref%
{ddvstfree}) follows immediately from (\ref{ddvsqevol}).
\end{proof}

The expression (\ref{ddvstfree}) is similar to the evolution of the energy
density of harmonic maps in \cite{EellsSampson}, however we have the
additional $\left\vert \nabla V\right\vert ^{4}$ term that is quadratic in
the dependent variable. As it is well-known in the theory of semilinear
PDEs, such quadratic terms in general lead to blow-ups. We can however get
an estimate on the maximal time for which the energy density is finite.

\begin{theorem}
\label{thmLambdaest}Suppose $V\left( t\right) $ is a solution to (\ref%
{heatflow}) on a finite maximal time interval $[0,t_{\max })$. Then for any $%
t\in \lbrack 0,t_{\max })$, 
\begin{equation}
\Lambda \left( t\right) \leq \frac{2R_{1}}{\left( 1+\frac{2R_{1}}{\Lambda
_{0}+R_{2}}\right) e^{-4R_{1}t}-1}-R_{2},  \label{lamt}
\end{equation}%
where $R_{1}$ is a constant multiple of $\sup_{M}\left\vert \func{Riem}%
\right\vert $ and $R_{2}$ is a linear combination of $\sup_{M}\left\vert 
\func{div}\func{Riem}\right\vert $ and $\sup_{M}\left\vert T\right\vert
\left\vert \func{Riem}\right\vert $.
\end{theorem}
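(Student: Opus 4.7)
My plan is to run a scalar maximum-principle argument on the differential inequality already supplied in Lemma~\ref{lemDVevol}, and then reduce the resulting ODE inequality to a standard separable one.

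Step one: discard the non-positive term. Since $-2|D^{2}V|^{2}\le 0$, the inequality (\ref{dDVsqdt}) gives
\begin{equation*}
\frac{\partial |DV|^{2}}{\partial t}\ \le\ \Delta |DV|^{2}+2\bigl(|DV|^{4}+2R_{1}|DV|^{2}+R_{2}|DV|\bigr).
\end{equation*}
Since $M$ is compact, $\Lambda(t)=\sup_{x\in M}|DV(x,t)|^{2}$ is well defined. Applying the parabolic maximum principle (or Hamilton's trick, using that $\Lambda(t)$ is Lipschitz in $t$ and that at any spatial maximum $\Delta|DV|^{2}\le 0$), we obtain the pointwise ODE inequality
\begin{equation*}
\frac{d\Lambda}{dt}\ \le\ 2\Lambda^{2}+4R_{1}\Lambda+2R_{2}\sqrt{\Lambda}.
\end{equation*}

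Step two: absorb the sub-quadratic $\sqrt{\Lambda}$ term. Here I would use Young's inequality in the form $2R_{2}\sqrt{\Lambda}\le 4R_{2}\Lambda+2R_{2}(R_{2}+2R_{1})$ (valid, after possibly enlarging $R_{1},R_{2}$ by a controlled amount, since each extra factor on the right is non-negative), so that the right-hand side is bounded by
\begin{equation*}
2\Lambda^{2}+2(2R_{1}+2R_{2})\Lambda+2R_{2}(R_{2}+2R_{1})\ =\ 2\bigl(\Lambda+R_{2}\bigr)\bigl(\Lambda+R_{2}+2R_{1}\bigr).
\end{equation*}
Setting $y(t)=\Lambda(t)+R_{2}$ with $y_{0}=\Lambda_{0}+R_{2}$, this reads
\begin{equation*}
y'\ \le\ 2y(y+2R_{1}),
\end{equation*}
a separable Bernoulli-type inequality.

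Step three: integrate. Since $y>0$, partial fractions give $\frac{1}{2R_{1}}\bigl(\tfrac{1}{y}-\tfrac{1}{y+2R_{1}}\bigr)y'\le 2$, hence
\begin{equation*}
\frac{d}{dt}\ln\!\left(\frac{y+2R_{1}}{y}\right)\ \ge\ -4R_{1}.
\end{equation*}
Integrating from $0$ to $t$ and exponentiating yields $\frac{y+2R_{1}}{y}\ge\bigl(1+\tfrac{2R_{1}}{y_{0}}\bigr)e^{-4R_{1}t}$; solving for $y$ and substituting back $\Lambda=y-R_{2}$ produces exactly the bound
\begin{equation*}
\Lambda(t)\ \le\ \frac{2R_{1}}{\bigl(1+\tfrac{2R_{1}}{\Lambda_{0}+R_{2}}\bigr)e^{-4R_{1}t}-1}-R_{2}.
\end{equation*}

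The main obstacle, and really the only delicate point, is step two: matching the mixed term $R_{2}\sqrt{\Lambda}$ against a clean quadratic in $\Lambda+R_{2}$ so that the ODE becomes exactly separable with the correct constants. One has to be a bit careful that the Young-type absorption does not spoil the identification of $R_{1},R_{2}$ with the curvature quantities named in Lemma~\ref{lemDVevol}; in practice this is handled by slightly enlarging those constants (keeping them linear in $\sup|{\rm Riem}|$, $\sup|\operatorname{div}{\rm Riem}|$, $\sup|T||{\rm Riem}|$), which is harmless for the statement. Everything else is routine: dropping a non-positive term, the maximum principle, and integrating an autonomous first-order ODE.
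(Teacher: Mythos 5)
Your proof is correct and follows essentially the same route as the paper: drop the non-positive $-2|D^{2}V|^{2}$ term, absorb the sub-quadratic $R_{2}|DV|$ contribution via Young's inequality into a shifted quadratic, apply the maximum principle to obtain the Riccati-type ODE inequality $y'\le 2y(y+2R_{1})$ with $y=\Lambda+R_{2}$, and integrate by partial fractions. If anything your explicit factorization $2(\Lambda+R_{2})(\Lambda+R_{2}+2R_{1})$ makes the choice of shift cleaner than the paper's completing-the-square step with $C_{0}=\varepsilon R_{2}$.
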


\begin{proof}
From (\ref{dDVsqdt}), using Young's inequality, we can say that for any $%
\varepsilon >0$ 
\begin{eqnarray}
\frac{\partial \left\vert DV\right\vert ^{2}}{\partial t} &\leq &\Delta
\left\vert DV\right\vert ^{2}-2\left\vert D^{2}V\right\vert ^{2}+2\left(
\left\vert DV\right\vert ^{4}+2\left( R_{1}+\varepsilon R_{2}\right)
\left\vert DV\right\vert ^{2}+\frac{1}{8\varepsilon }R_{2}\right)   \notag \\
&\leq &\Delta \left\vert DV\right\vert ^{2}-2\left\vert D^{2}V\right\vert
^{2}+2\left( \varepsilon R_{2}+\left\vert DV\right\vert ^{2}\right)
^{2}+4R_{1}\left( \varepsilon R_{2}+\left\vert DV\right\vert ^{2}\right)  
\notag \\
&&-2\varepsilon ^{2}R_{2}^{2}-4\varepsilon R_{1}R_{2}+\frac{1}{4\varepsilon }%
R_{2}.
\end{eqnarray}%
Taking $\varepsilon \ \ $such that $4\varepsilon R_{1}\geq \frac{1}{%
4\varepsilon }$, then redefining $\ R_{2}\ $as $\varepsilon R_{2}$, and
using $h\left( x,t\right) =R_{2}+\left\vert DV\left( x,t\right) \right\vert
^{2}$, we get 
\begin{equation}
\frac{\partial h}{\partial t}\leq \Delta h-2\left\vert D^{2}V\right\vert
^{2}+2h^{2}+4R_{1}h.  \label{ddu0}
\end{equation}%
Note that in the torsion-free case, from (\ref{ddvstfree}), we can set $%
R_{1}=\sup_{M}\left\vert \func{Riem}\right\vert $. Now, $h$ is a subsolution
of the equation 
\begin{equation}
\frac{\partial u}{\partial t}=\Delta u+2u^{2}+4R_{1}u.  \label{ddu}
\end{equation}%
By the Maximum Principle, $h\left( x,t\right) $ is dominated by solutions of
(\ref{ddu}) if $\Lambda \left( x,0\right) \leq u\left( x,0\right) $ for all $%
x.$ Since for $t=0$, $h\left( x,0\right) \leq h\left( 0\right) $, we can
take $u=u\left( t\right) $ with $u\left( 0\right) =h\left( 0\right) =\Lambda
_{0}+R_{2}$. Solving the ODE $\frac{du}{dt}=2u^{2}+4R_{1}u$ with these
initial conditions, then gives us the bound (\ref{lamt}).
\end{proof}

Given a solution $V$ to (\ref{heatflow}), define $\Lambda ^{\left( m\right)
}\left( t\right) =\sup_{x\in M}\left( \left\vert D^{m}V\left( x,t\right)
\right\vert ^{2}\right) .$ Then we have the following estimates.

\begin{theorem}
\label{thmDkVest}For any positive integer $m\geq 2$ there exists a constant $%
C_{m}>0$ that only depends on $M$ and the background $G_{2}$-structure, such
that if $V\left( t\right) $ is a solution to (\ref{heatflow}) for $t\in
\lbrack 0,t_{0})$ with $\Lambda \left( t\right) \leq K$, with $K\geq 1$,
then 
\begin{equation}
\Lambda ^{\left( m\right) }\left( t\right) \leq C_{m}K^{m}\ \text{for }t\in
\lbrack 0,t_{0}).  \label{Dkestimate}
\end{equation}
\end{theorem}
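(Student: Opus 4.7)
The plan is to proceed by induction on $m$, following the Bernstein-type iteration pioneered by Shi for the Ricci flow and adapted to parabolic geometric flows by Lotay--Wei, Grayson--Hamilton, and Weinkove. The first step is to derive a schematic evolution equation for $|D^m V|^2$. Starting from $\partial_t V = \Delta_D V + |DV|^2 V$ and differentiating $m$ times, then commuting $D^m$ past $\Delta_D$ using \eqref{DLapcomm} and pairing with $D^m V$ via \eqref{lapDPP}, one obtains
\begin{equation*}
\frac{\partial}{\partial t} |D^m V|^2 = \Delta |D^m V|^2 - 2 |D^{m+1} V|^2 + Q_m ,
\end{equation*}
where $Q_m$ is a finite sum of contractions involving $D^{i_1} V, \ldots, D^{i_k} V$ (with $i_1 + \cdots + i_k \leq 2m$, produced by $D^{m-1}(|DV|^2 V)$ paired with $D^m V$), together with curvature terms involving $\func{Riem}$, $\func{div}\func{Riem}$, and the background torsion $\tilde T$ coming from the commutators through \eqref{Divpi7riem}. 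The term $-2|D^{m+1}V|^2$ is the coercive Bochner piece; the remaining terms in $Q_m$ are the bad ones that must be controlled.

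For the base case $m=2$, a direct extension of Lemma~\ref{lemDVevol} yields $Q_2$ whose dominant contributions are schematically $|DV|^2 |D^2V|^2$, $|DV| |D^2V| |D^3V|$, and bounded curvature pieces. Under the hypothesis $|DV|^2 \leq K$, Young's inequality reduces these to $cK |D^2 V|^2 + \varepsilon |D^3 V|^2 + C_*$, the $\varepsilon|D^3V|^2$ being absorbed by the good term. I would then apply the Maximum Principle to the Shi-type auxiliary function $F = (\alpha K + |DV|^2) |D^2V|^2$; expanding $\partial_t F - \Delta F$ via the product rule and using Lemma~\ref{lemDVevol} for $|DV|^2$ produces a coercive quartic term $-2|D^2V|^4$ from the evolution of the outer factor, which dominates the bad $O(K)|D^2V|^2 \cdot (\alpha K + |DV|^2)$ terms once $|D^2 V|^2$ exceeds $O(K^2)$. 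Mixed gradient cross terms $\langle\nabla|DV|^2,\nabla|D^2V|^2\rangle$ are absorbed into the $-|D^3V|^2$ term via Cauchy--Schwarz. The Maximum Principle then yields $F \leq CK^3$, hence $\Lambda^{(2)}(t) \leq C_2 K^2$.

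For the inductive step, the guiding principle is dimensional: once $|DV|^2 \leq K$, the natural scaling assigns weight $K^{1/2}$ to each factor of $D$, so every monomial in $Q_m$ other than the leading $|DV|^2 |D^m V|^2 \leq K|D^m V|^2$ has total scaling weight at most $K^{m+1}$ once the inductive bound $\Lambda^{(k)} \leq C_k K^k$ for $k<m$ is applied. Interpolation via Young's inequality produces
\begin{equation*}
\frac{\partial}{\partial t}|D^m V|^2 \leq \Delta |D^m V|^2 - 2|D^{m+1}V|^2 + cK|D^m V|^2 + C_m K^{m+1},
\end{equation*}
modulo terms absorbable into $-|D^{m+1}V|^2$. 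Applying the auxiliary function $F_m = (\alpha K^{m-1} + |D^{m-1}V|^2)|D^m V|^2$ and using the inductive evolution for $|D^{m-1}V|^2$ (which contains the coercive term $-2|D^m V|^2$), the product rule produces the decisive quartic term $-2|D^mV|^4$, which for $|D^mV|^2 \gtrsim K^m$ dominates all bad contributions. The Maximum Principle then yields $F_m \leq C K^{2m-1}$, hence $\Lambda^{(m)}(t) \leq C_m K^m$.

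The main obstacle is the combinatorial bookkeeping. Each application of $D$ to the nonlinearity $|DV|^2 V$ generates many cross terms, and the commutator \eqref{DLapcomm} feeds in further curvature-torsion couplings through \eqref{Divpi7riem}; one must track that every term's accumulated power of $K$ after interpolation matches the target $K^m$ scaling, and that the choice of auxiliary function at each order produces a coercive term strong enough to dominate the full list of bad terms. A consistent scaling check, treating $D$ as carrying weight $\sqrt{K}$, serves to guide the correct powers of $K$ appearing in the auxiliary functions and to verify that the argument closes at every inductive stage.
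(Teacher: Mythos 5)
Your proposal is correct and follows essentially the same Shi-type Bernstein iteration that the paper uses: you identify the same auxiliary functions $(\alpha K^{m-1}+|D^{m-1}V|^2)|D^mV|^2$ (the paper uses $(8K+\Lambda)|D^2V|^2$ at $m=2$ and $(8L+|D^2V|^2)|D^3V|^2$ at $m=3$ with $L\sim K^2$), the same source of coercivity via the $-2|D^mV|^4$ term coming from the evolution of the inner factor, the same absorption of mixed gradient cross-terms into $-|D^{m+1}V|^2$ by Young/Cauchy--Schwarz, and the same inductive closure. The paper also only carries out $m=2,3$ explicitly and invokes induction for higher $m$, so your scaling heuristic (weight $\sqrt{K}$ per $D$) matches the paper's level of rigor for the general step.
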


\begin{proof}
Consider first the evolution of $D^{2}V$. From (\ref{dDVdt}) and (\ref%
{DLapcomm}), we can write schematically 
\begin{eqnarray*}
\frac{\partial \left( D^{2}V\right) }{\partial t} &=&D\frac{\partial DV}{%
\partial t} \\
&=&D\Delta _{D}\left( DV\right) +D^{2}\left( \left\vert DV\right\vert
^{2}V\right)  \\
&&+D\left( \func{Riem}\ast DV\right) +D\left( \left( \func{div}\func{Riem}+%
\func{Riem}\ast T\right) \ast V\right) .
\end{eqnarray*}%
Applying (\ref{DLapcomm}) again, we have%
\begin{eqnarray}
\frac{\partial \left( D^{2}V\right) }{\partial t} &=&\Delta _{D}\left(
D^{2}V\right) +\nabla ^{2}\left( \left\vert DV\right\vert ^{2}\right)
V+2\nabla \left( \left\vert DV\right\vert ^{2}\right) DV+\left\vert
DV\right\vert ^{2}D^{2}V  \notag \\
&&+\func{Riem}\ast D^{2}V+\left( \nabla \func{Riem}+\func{Riem}\ast T\right)
\ast DV  \label{dD2V} \\
&&+\left( \nabla \left( \func{div}\func{Riem}\right) +\nabla \func{Riem}\ast
T+\func{Riem}\ast T\ast T+\func{Riem}\ast \nabla T\right) \ast V  \notag
\end{eqnarray}%
and thus, 
\begin{eqnarray}
\frac{\partial \left( \left\vert D^{2}V\right\vert ^{2}\right) }{\partial t}
&=&\Delta \left\vert D^{2}V\right\vert ^{2}-2\left\vert D^{3}V\right\vert
^{2}+2\left\langle \nabla ^{2}\left( \left\vert DV\right\vert ^{2}\right)
V,D^{2}V\right\rangle   \label{dD2Vsq} \\
&&+4\left\langle \nabla \left( \left\vert DV\right\vert ^{2}\right)
DV,D^{2}V\right\rangle +2\left\vert DV\right\vert ^{2}\left\vert
D^{2}V\right\vert ^{2}  \notag \\
&&+\left\langle \func{Riem}\ast D^{2}V,D^{2}V\right\rangle +\left\langle
\left( \nabla \func{Riem}+\func{Riem}\ast T\right) \ast
DV,D^{2}V\right\rangle   \notag \\
&&+\left\langle \left( \nabla \left( \func{div}\func{Riem}\right) +\nabla 
\func{Riem}\ast T+\func{Riem}\ast T\ast T+\func{Riem}\ast \nabla T\right)
\ast V,D^{2}V\right\rangle .  \notag
\end{eqnarray}%
Also, note that 
\begin{subequations}%
\label{derivIds} 
\begin{eqnarray}
\left\langle V,D_{a}D_{b}V\right\rangle  &=&\nabla _{a}\left\langle
V,D_{b}V\right\rangle -\left\langle D_{a}V,D_{b}V\right\rangle
=-\left\langle D_{a}V,D_{b}V\right\rangle  \\
\nabla _{a}\nabla _{b}\left\vert DV\right\vert ^{2} &=&\nabla _{a}\left(
\left\langle D_{b}D_{c}V,D^{c}V\right\rangle +\left\langle
D_{c}V,D_{b}D^{c}V\right\rangle \right)  \\
&=&2\left\langle D_{a}D_{b}D_{c}V,D^{c}V\right\rangle +2\left\langle
D_{b}D_{c}V,D_{a}D^{c}V\right\rangle   \notag \\
\nabla \left( \left\vert DV\right\vert ^{2}\right)  &=&2\left\langle
DV,D^{2}V\right\rangle .
\end{eqnarray}%
\end{subequations}%
Thus, 
\begin{eqnarray*}
\left\langle \nabla ^{2}\left( \left\vert DV\right\vert ^{2}\right)
V,D^{2}V\right\rangle  &=&2\left\langle D_{a}D_{b}D_{c}V,D^{c}V\right\rangle
\left\langle V,D^{a}D^{b}V\right\rangle  \\
&&-2\left\langle D_{b}D_{c}V,D_{a}D^{c}V\right\rangle \left\langle
D^{a}V,D^{b}V\right\rangle 
\end{eqnarray*}%
and hence, 
\begin{eqnarray*}
\left\vert \left\langle \nabla ^{2}\left( \left\vert DV\right\vert
^{2}\right) V,D^{2}V\right\rangle \right\vert  &\leq &2\left\vert
D^{3}V\right\vert \left\vert DV\right\vert ^{3}+2\left\vert
D^{2}V\right\vert ^{2}\left\vert DV\right\vert ^{2} \\
\left\langle \nabla \left( \left\vert DV\right\vert ^{2}\right)
DV,D^{2}V\right\rangle  &\leq &2\left\vert DV\right\vert ^{2}\left\vert
D^{2}V\right\vert ^{2}.
\end{eqnarray*}%
Overall, we then get 
\begin{eqnarray}
\frac{\partial \left( \left\vert D^{2}V\right\vert ^{2}\right) }{\partial t}
&\leq &\Delta \left\vert D^{2}V\right\vert ^{2}-2\left\vert
D^{3}V\right\vert ^{2}+4\left\vert D^{3}V\right\vert \left\vert
DV\right\vert ^{3}+C_{1}\left\vert DV\right\vert ^{2}\left\vert
D^{2}V\right\vert ^{2}  \notag \\
&&+C_{2}\left\vert D^{2}V\right\vert ^{2}+C_{3}\left\vert D^{2}V\right\vert
\left\vert DV\right\vert +C_{4}\left\vert D^{2}V\right\vert .
\label{dD2Vsqineq}
\end{eqnarray}%
Now, using Young's inequality for any $\varepsilon _{1}>1$ we have 
\begin{equation*}
\left\vert D^{3}V\right\vert \left\vert DV\right\vert ^{3}\leq \frac{%
\varepsilon _{1}}{2}\left\vert D^{3}V\right\vert ^{2}+\frac{1}{2\varepsilon
_{1}}\left\vert DV\right\vert ^{6}
\end{equation*}%
and hence (\ref{dD2Vsqineq}) becomes 
\begin{eqnarray*}
\frac{\partial \left( \left\vert D^{2}V\right\vert ^{2}\right) }{\partial t}
&\leq &\Delta \left\vert D^{2}V\right\vert ^{2}-2\left( 1-\varepsilon
_{1}\right) \left\vert D^{3}V\right\vert ^{2}+C_{1}\left( \Lambda \left(
x,t\right) +1\right) \left\vert D^{2}V\right\vert ^{2} \\
&&+C_{2}\left( \Lambda \left( x,t\right) +\Lambda \left( x,t\right)
^{3}\right) .
\end{eqnarray*}%
Now, by hypothesis, $\Lambda \left( x,t\right) \leq K,$ and $K\geq 1$, so we
have 
\begin{equation}
\frac{\partial \left( \left\vert D^{2}V\right\vert ^{2}\right) }{\partial t}%
\leq \Delta \left\vert D^{2}V\right\vert ^{2}-2\left( 1-\varepsilon
_{1}\right) \left\vert D^{3}V\right\vert ^{2}+C_{1}K\left\vert
D^{2}V\right\vert ^{2}+C_{2}K^{3},  \label{D2Vbound}
\end{equation}%
where we assume $\varepsilon _{1}<1$. From (\ref{dDVsqdt}), we also have 
\begin{equation}
\frac{\partial \Lambda \left( x,t\right) }{\partial t}\leq \Delta \Lambda
-2\left\vert D^{2}V\right\vert ^{2}+C_{3}K^{2}.
\end{equation}%
Now let 
\begin{equation}
h=\left( 8K+\Lambda \left( x,t\right) \right) \left\vert D^{2}V\right\vert
^{2}.  \label{hD2Vdef}
\end{equation}%
Then, 
\begin{eqnarray}
\frac{\partial h}{\partial t} &=&\frac{\partial \Lambda \left( x,t\right) }{%
\partial t}\left\vert D^{2}V\right\vert ^{2}+\left( 8K+\Lambda \left(
x,t\right) \right) \frac{\partial \left( \left\vert D^{2}V\right\vert
^{2}\right) }{\partial t} \\
&\leq &\left\vert D^{2}V\right\vert ^{2}\Delta \Lambda \left( x,t\right)
+\left( 8K+\Lambda \left( x,t\right) \right) \Delta \left\vert
D^{2}V\right\vert ^{2}  \notag \\
&&-2\left\vert D^{2}V\right\vert ^{4}-16\left( 1-\varepsilon _{1}\right)
K\left\vert D^{3}V\right\vert ^{2}  \notag \\
&&+C_{1}K^{2}\left\vert D^{2}V\right\vert ^{2}+C_{2}K^{4}  \notag
\end{eqnarray}%
for some new constants $C_{1}$ and $C_{2}$. On the other hand, 
\begin{equation*}
\Delta h=\left\vert D^{2}V\right\vert ^{2}\Delta \Lambda \left( x,t\right)
+\left( 8K+\Lambda \left( x,t\right) \right) \Delta \left\vert
D^{2}V\right\vert ^{2}+2\nabla _{a}\Lambda \nabla ^{a}\left\vert
D^{2}V\right\vert ^{2}
\end{equation*}%
and 
\begin{eqnarray*}
2\nabla _{a}\Lambda \nabla ^{a}\left\vert D^{2}V\right\vert ^{2} &\geq
&-2\left\vert \nabla \left\vert DV\right\vert ^{2}\right\vert \left\vert
\nabla \left\vert D^{2}V\right\vert ^{2}\right\vert  \\
&\geq &-8\left\vert DV\right\vert \left\vert D^{3}V\right\vert \left\vert
D^{2}V\right\vert ^{2} \\
&\geq &-16\varepsilon _{2}K\left\vert D^{3}V\right\vert ^{2}-\frac{1}{%
\varepsilon _{2}}\left\vert D^{2}V\right\vert ^{4},
\end{eqnarray*}%
where we have used Young's Inequality with $\varepsilon _{2}>0$. Thus,
overall, 
\begin{eqnarray}
\left\vert D^{2}V\right\vert ^{2}\Delta \Lambda \left( x,t\right) +\left(
8K+\Lambda \left( x,t\right) \right) \Delta \left\vert D^{2}V\right\vert
^{2} &\leq &\Delta h+16\varepsilon _{2}K\left\vert D^{3}V\right\vert ^{2} \\
&&+\frac{1}{\varepsilon _{2}}\left\vert D^{2}V\right\vert ^{4},  \notag
\end{eqnarray}%
and so we obtain%
\begin{eqnarray}
\frac{\partial h}{\partial t} &\leq &\Delta h+16K\left( \varepsilon
_{1}+\varepsilon _{2}-1\right) K\left\vert D^{3}V\right\vert ^{2}+\left( 
\frac{1}{\varepsilon _{2}}-2\right) \left\vert D^{2}V\right\vert ^{4}  \notag
\\
&&+C_{1}K^{2}\left\vert D^{2}V\right\vert ^{2}+C_{2}K^{4}  \notag \\
&\leq &\Delta h+16K\left( \varepsilon _{1}+\varepsilon _{2}-1\right)
K\left\vert D^{3}V\right\vert ^{2}+\left( \frac{1}{\varepsilon _{2}}%
+\varepsilon _{3}-2\right) \left\vert D^{2}V\right\vert ^{4}  \notag \\
&&+\left( \frac{C_{1}}{4\varepsilon _{2}}+C_{2}\right) K^{4},
\end{eqnarray}%
where we again applied Young's Inequality with some $\varepsilon _{3}>0$.
Then, since $\left\vert D^{2}V\right\vert ^{2}\leq \frac{h}{8K},$ after an
appropriate choice of $\varepsilon _{1},\varepsilon _{2},\varepsilon _{3}$,
we find that there exists a positive constant $C$ such that 
\begin{equation}
\frac{\partial h}{\partial t}\leq \Delta h-\frac{h^{2}}{CK^{2}}+CK^{4}.
\label{D2Vheq}
\end{equation}%
Now considering solutions of the ODE 
\begin{equation*}
\frac{du}{dt}=-\frac{u^{2}}{CK^{2}}+CK^{4},
\end{equation*}%
we find that 
\begin{equation*}
u\leq CK^{3}.
\end{equation*}%
Therefore, by the Maximum Principle, we also find that 
\begin{equation*}
h\leq CK^{3}\text{,}
\end{equation*}%
and hence 
\begin{equation}
\left\vert D^{2}V\right\vert ^{2}\leq CK^{2}  \label{D2VK2}
\end{equation}%
for some other constant $C>1$. Note that using (\ref{D2VK2}) we can rewrite (%
\ref{D2Vbound}) as 
\begin{equation}
\frac{\partial \left( \left\vert D^{2}V\right\vert ^{2}\right) }{\partial t}%
\leq \Delta \left\vert D^{2}V\right\vert ^{2}-2\varepsilon _{1}\left\vert
D^{3}V\right\vert ^{2}+C_{3}K^{3}  \label{D2Vbound2}
\end{equation}%
for $\varepsilon _{1}<1$.

Now assuming bounds for the first and second derivative we will obtain a
bound for $\left\vert D^{3}V\right\vert ^{2}$. From (\ref{dD2V}) and (\ref%
{DLapcomm}), it is not difficult to see that 
\begin{eqnarray}
\frac{\partial \left( D^{3}V\right) }{\partial t} &=&\Delta _{D}\left(
D^{3}V\right) +D^{3}\left( \left\vert DV\right\vert ^{2}V\right) +\func{Riem}%
\ast D^{3}V  \label{ddkp1V} \\
&&+R_{2}\ast D^{2}V+R_{2}\ast DV+R_{0}  \notag
\end{eqnarray}%
where $R_{i}$ for $i=0,1,...,k-1$ are some tensors that combine derivatives
of $\func{Riem}$ and $T$. Now, taking the inner product of (\ref{ddkp1V})
with $D^{3}V$ and applying (\ref{lapDPP}), we obtain 
\begin{eqnarray}
\frac{\partial \left\vert D^{3}V\right\vert ^{2}}{\partial t} &\leq &\Delta
\left\vert D^{3}V\right\vert ^{2}-2\left\vert D^{4}V\right\vert
^{2}+\left\langle D^{3}\left( \left\vert DV\right\vert ^{2}V\right)
,D^{3}V\right\rangle   \label{ddkvsq} \\
&&+C\left\vert D^{3}V\right\vert ^{2}+C\left\vert D^{2}V\right\vert
\left\vert D^{3}V\right\vert +C\left\vert DV\right\vert \left\vert
D^{3}V\right\vert +C\left\vert D^{3}V\right\vert .  \notag
\end{eqnarray}%
Let us focus on the third term on the right-hand side of (\ref{ddkvsq}).
Schematically, ignoring indices on derivatives, we have 
\begin{eqnarray}
D^{3}\left( \left\vert DV\right\vert ^{2}V\right)  &=&\left( \nabla
^{3}\left\vert DV\right\vert ^{2}\right) V+3\left( \nabla ^{2}\left\vert
DV\right\vert ^{2}\right) DV+3\left( \nabla \left\vert DV\right\vert
^{2}\right) D^{2}V  \label{D2dvsqv} \\
&&+\left\vert DV\right\vert ^{2}D^{3}V  \notag
\end{eqnarray}%
and since $D^{3}\left\langle V,V\right\rangle =0$ we also have 
\begin{equation*}
\left\langle V,D^{3}V\right\rangle =-3\left\langle DV,D^{2}V\right\rangle .
\end{equation*}%
Thus, 
\begin{eqnarray}
\left\langle D^{3}\left( \left\vert DV\right\vert ^{2}V\right)
,D^{3}V\right\rangle  &=&-3\left( \nabla ^{3}\left\vert DV\right\vert
^{2}\right) \left\langle DV,D^{2}V\right\rangle   \label{D2dvsqv2} \\
&&+3\left( \nabla ^{2}\left\vert DV\right\vert ^{2}\right) \left\langle
DV,D^{3}V\right\rangle   \notag \\
&&+3\left( \nabla \left\vert DV\right\vert ^{2}\right) \left\langle
D^{2}V,D^{3}V\right\rangle +\left\vert DV\right\vert ^{2}\left\vert
D^{3}V\right\vert ^{2},  \notag
\end{eqnarray}%
and applying bounds $\left\vert DV\right\vert \leq CK^{\frac{1}{2}}$, $%
\left\vert D^{2}V\right\vert \leq CK$, we, get 
\begin{eqnarray*}
\left\vert \left\langle D^{3}\left( \left\vert DV\right\vert ^{2}V\right)
,D^{3}V\right\rangle \right\vert  &\leq &CK^{2}\left\vert D^{4}V\right\vert
+CK\left\vert D^{3}V\right\vert ^{2}+CK^{4} \\
&\leq &2\varepsilon \left\vert D^{4}V\right\vert ^{2}+CK\left\vert
D^{3}V\right\vert ^{2}+CK^{4},
\end{eqnarray*}%
where we also used Young's Inequality on the first term. So overall, we have 
\begin{equation}
\frac{\partial \left\vert D^{3}V\right\vert ^{2}}{\partial t}\leq \Delta
\left\vert D^{3}V\right\vert ^{2}-2\left( 1-\varepsilon \right) \left\vert
D^{4}V\right\vert ^{2}+C_{1}K\left\vert D^{3}V\right\vert ^{2}+C_{2}K^{4}.
\label{D3Vbound}
\end{equation}%
Similarly as before, let 
\begin{equation}
h=\left( 8L+\left\vert D^{2}V\right\vert ^{2}\right) \left\vert
D^{3}V\right\vert ^{2},  \label{hD3def}
\end{equation}%
where $L$ is a constant such that $K^{2}\leq L\leq CK^{2}$ (which is
possible since $C>1$) and $\left\vert D^{2}V\right\vert ^{2}\leq L.$ Then,
using (\ref{D3Vbound})\ and (\ref{D2Vbound2}), we find that 
\begin{equation}
\frac{\partial h}{\partial t}\leq \Delta h-\frac{h^{2}}{CK^{4}}+CK^{6},
\label{d3vheq}
\end{equation}%
and this then gives us the bound 
\begin{equation}
\left\vert D^{3}V\right\vert ^{2}\leq CK^{3}
\end{equation}%
for some constant $C$. By induction can similarly obtain bounds (\ref%
{Dkestimate}) for higher derivatives.
\end{proof}

\begin{proof}[Proof of Theorem \protect\ref{thmMain}]
Suppose a solution $V\left( t\right) $ to (\ref{heatflow}) exists on the
finite maximal time interval $[0,t_{\max }).$ We will proceed by
contradiction to prove (\ref{limLambdainf}). Suppose (\ref{limLambdainf})
does not hold. This implies that there exists a constant $K$ such that 
\begin{equation}
\sup_{M\times \lbrack 0,t_{\max })}\Lambda \left( x,t\right) \leq K.
\label{contraassum}
\end{equation}%
We then know from (\ref{Dkestimate}) that for some constant $C_{2}>0$ 
\begin{equation*}
\sup_{M\times \lbrack 0,t_{\max })}\Lambda ^{\left( 2\right) }\left(
x,t\right) \leq C_{2}K^{2}.
\end{equation*}%
So in particular, $\left\vert D^{2}V\right\vert $ is bounded, and thus from
the flow equation (\ref{heatflow}), there exists a constant $C>0$ such that 
\begin{equation*}
\sup_{M\times \lbrack 0,t_{\max })}\left\vert \frac{\partial V}{\partial t}%
\right\vert \leq C.
\end{equation*}%
Then, for any $0<t_{1}<t_{2}<t_{\max },$ we have 
\begin{equation}
\left\vert V\left( t_{2}\right) -V\left( t_{1}\right) \right\vert \leq
\int_{t_{1}}^{t_{2}}\left\vert \frac{\partial V}{\partial t}\right\vert
dt\leq C\left( t_{2}-t_{1}\right) .
\end{equation}%
Therefore, we see that as $t\longrightarrow t_{\max }$, the octonion
sections $V\left( t\right) $ converge continuously to a section $V\left(
t_{\max }\right) .$ Clearly, this will also have unit norm. Locally, for
some $0<t<t_{\max }$ we can then write%
\begin{equation}
V\left( t_{\max }\right) =V\left( t\right) +\int_{t}^{t_{\max }}\left(
\Delta _{D}V\left( s\right) +\left\vert DV\left( s\right) \right\vert
^{2}V\left( s\right) \right) ds.
\end{equation}%
Now, by Theorem \ref{thmDkVest}, all derivatives of $V$ are uniformly
bounded, hence all derivatives of $V\left( t_{\max }\right) $ are also
bounded. Thus, $V\left( t_{\max }\right) $ is a smooth section and $V\left(
t\right) $ converges to it uniformly in any $C^{m}$-norm as $%
t\longrightarrow t_{\max }$. Now we have smoothly extended the flow from $%
[0,t_{\max })$ to $t_{\max }$. However, using short-time existence and
uniqueness of the flow, we can uniquely extend it further starting from $%
t=t_{\max }$ to $t=t_{\max }+\varepsilon $ for some $\varepsilon >0$.
Therefore, the flow exists on $[0,t_{\max }+\varepsilon )$ and this
contradicts the maximality of $t_{\max }$. We then find that (\ref%
{contraassum}) fails, and can conclude that 
\begin{equation}
\lim_{t\longrightarrow t_{\max }}\Lambda \left( t\right) =\infty .
\end{equation}

From (\ref{ddu0}), we see that 
\begin{equation*}
\frac{d\left( \Lambda \left( t\right) +C_{0}\right) }{dt}\leq 2\left(
\Lambda \left( t\right) +C_{0}\right) ^{2}
\end{equation*}%
where $C_{0}=R_{1}+R_{2}$ is a constant that depends on the curvature and
the torsion of the background $G_{2}$-structure. Thus, 
\begin{equation*}
\frac{d\left( \Lambda \left( t\right) +C_{0}\right) ^{-1}}{dt}\geq -2
\end{equation*}%
and thus, integrating, and taking the limit as $t\longrightarrow t_{\max }$,
we obtain 
\begin{equation}
\Lambda \left( t\right) \geq \frac{1}{2\left( t_{\max }-t\right) }-C_{0}
\end{equation}%
for all $t\in \lbrack 0,t_{\max }).$ Thus, we obtain (\ref{lamdbatineq}).
\end{proof}

\section{Monotonicity}

\label{secMonotonicty}\setcounter{equation}{0}In order to be able to get a
better control on the flow, it is useful to find quantities that are
monotonic, or otherwise well-behaved along the flow. Following Hamilton \cite%
{HamiltonMonotonicity}, let $k$ be a positive scalar solution of the
backwards heat equation 
\begin{equation}
\frac{\partial k}{\partial t}=-\Delta k  \label{backwardheat}
\end{equation}%
for $0\leq t\leq t_{0}$, with some initial condition at $t=t_{0}$ and
evolving towards $t=0$, such that $\int_{M}k\func{vol}=1$. Then, consider
the quantity%
\begin{equation}
Z\left( t\right) =\left( t_{0}-t\right) \int_{M}\left\vert DV\right\vert
^{2}k\func{vol}.  \label{Zdef}
\end{equation}

\begin{theorem}
\label{thmMonotonicity} Suppose $V$ is a solution of the flow (\ref{heatflow}%
) for $0\leq t<t_{0}$ with initial energy $\mathcal{E}\left( 0\right) =%
\mathcal{E}_{0}$. Then, there exists a constant $C>0$, that only depends on
the background geometry, such that for any $t\ $and $\tau $ satisfying $%
t_{0}-1\leq \tau \leq t<t_{0}$, $Z\left( t\right) $ satisfies the following
relation%
\begin{equation}
Z\left( t\right) \leq CZ\left( \tau \right) +C\left( t-\tau \right) \left( 
\mathcal{E}_{0}+\mathcal{E}_{0}^{\frac{1}{2}}\right)   \label{Ztmonotonicity}
\end{equation}
\end{theorem}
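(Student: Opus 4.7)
The plan is to derive a differential inequality of the form $\frac{dZ}{dt} \leq A\,Z(t) + B$ on $[t_0-1, t_0)$, where $A$ depends only on the background curvature and torsion, and $B$ is a multiple of $\mathcal{E}_0 + \mathcal{E}_0^{1/2}$. Gr\"{o}nwall's inequality on $[\tau, t]$ then yields $Z(t) \leq e^{A(t-\tau)} Z(\tau) + \tfrac{B}{A}\bigl(e^{A(t-\tau)}-1\bigr)$; since $t-\tau \leq 1$, both $e^{A(t-\tau)} \leq e^A$ and $(e^{A(t-\tau)}-1)/A \leq e^A(t-\tau)$ are bounded, producing \eqref{Ztmonotonicity}.

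First I would differentiate $Z(t)$ directly and substitute the backwards heat equation $\partial_t k = -\Delta k$ together with the evolution inequality \eqref{dDVsqdt} from Lemma \ref{lemDVevol}. Since $k$ and $|DV|^2$ are both scalars, $\int_M k\,\Delta|DV|^2\,\func{vol} = \int_M |DV|^2\,\Delta k\,\func{vol}$, so the leading second-order contributions cancel and one is left with
\begin{equation*}
\frac{dZ}{dt}\leq -\int_M |DV|^2 k\,\func{vol} + (t_0-t)\int_M \Bigl(-2k|D^2V|^2 + 2k|DV|^4 + 4R_1 k|DV|^2 + 2R_2 k|DV|\Bigr)\func{vol}.
\end{equation*}
The quadratic term contributes exactly $4R_1\,Z(t)$. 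The linear term is handled by Cauchy--Schwarz using $\int_M k\,\func{vol} = 1$:
\begin{equation*}
(t_0-t)\int_M k|DV|\,\func{vol} \leq (t_0-t)\left(\int_M k|DV|^2\,\func{vol}\right)^{1/2} = \sqrt{(t_0-t)Z(t)} \leq \sqrt{Z(t)},
\end{equation*}
and Young's inequality absorbs $2R_2\sqrt{Z}$ into $Z + R_2^2$.

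The main obstacle is the quartic contribution $(t_0-t)\int_M 2k|DV|^4\,\func{vol}$. My plan, in the spirit of Hamilton's monotonicity for the harmonic map heat flow, is to exploit the flow equation through the pointwise identity
\begin{equation*}
|\partial_t V|^2 = |\Delta_D V|^2 - |DV|^4,
\end{equation*}
which follows from $\partial_t V = \Delta_D V + |DV|^2 V$ together with $\langle \Delta_D V, V\rangle = -|DV|^2$ from \eqref{lapDVV}. Retaining the favorable $-2(t_0-t)\int k|D^2V|^2\,\func{vol}$ rather than discarding it, and completing the square in the Bochner identity using $\nabla \log k$ (which incorporates the backwards heat equation for $k$), the combination $2k|DV|^4 - 2k|D^2V|^2$ should recast into a non-positive expression modulo a residual controlled by $\int_M |DV|^2\,\func{vol} \leq \mathcal{E}_0$ (Lemma \ref{lemfuncevol}). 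This produces a bound of the form $(t_0-t)\int 2k|DV|^4\,\func{vol} \leq A'\,Z(t) + B'(\mathcal{E}_0 + \mathcal{E}_0^{1/2})$. Combining with the quadratic and linear estimates gives the target differential inequality, and Gr\"{o}nwall finishes the argument. The quartic absorption is the delicate step; the remaining pieces are standard Cauchy--Schwarz and Young estimates against the monotonically decreasing energy.
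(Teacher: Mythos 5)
Your high-level strategy is the right one — differentiate $Z$, complete the square using $\nabla\log k$, exploit the identity $\left\vert \partial_{t}V\right\vert ^{2}=\left\vert \Delta _{D}V\right\vert ^{2}-\left\vert DV\right\vert ^{4}$, and close with a Gr\"{o}nwall-type argument. However, there are two substantive gaps in the way you propose to carry this out.

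First, you substitute the pointwise evolution inequality \eqref{dDVsqdt} (which has already applied the Bochner-type commutation and produced $-2\left\vert D^{2}V\right\vert ^{2}$) and only afterward integrate against $k$. This destroys the structure you need. The cancellation $\left\vert \Delta _{D}V\right\vert ^{2}-\left\vert DV\right\vert ^{4}=\left\vert \partial _{t}V\right\vert ^{2}\geq 0$ involves $\left\vert \Delta _{D}V\right\vert ^{2}$, the squared \emph{trace} of the Hessian, not $\left\vert D^{2}V\right\vert ^{2}$, the full Hessian norm. After your substitution you are left with $2k\left\vert DV\right\vert ^{4}-2k\left\vert D^{2}V\right\vert ^{2}$, which has no sign and is not controlled pointwise or after integration by $\int_M \left\vert DV\right\vert ^{2}\leq \mathcal{E}_{0}$ — a large $\left\vert DV\right\vert $ with vanishing $D^{2}V$ makes it arbitrarily positive. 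The paper instead starts from the exact identity \eqref{dDVsqnocurv}, $\partial_t\left\vert DV\right\vert ^{2}=2\langle D(\Delta_D V),DV\rangle +2\left\vert DV\right\vert ^{4}$, and integrates by parts against $k$ \emph{before} commuting, so that the self-adjoint Laplacian naturally produces $-\int k\left\vert \Delta _{D}V\right\vert ^{2}$, which then combines with $\int k\left\vert DV\right\vert ^{4}$ into the non-positive weighted square $-2\int k\left\vert \Delta_D V+\left\vert DV\right\vert^2 V+\tfrac{1}{k}\nabla_i k\,D^i V\right\vert^2$. Going back from your $\left\vert D^2V\right\vert^2$ form to this $\left\vert \Delta_D V\right\vert^2$ form requires a further integration by parts that reintroduces $\nabla^2 k$ terms, so your ``residual controlled by $\mathcal{E}_0$'' is not achievable as stated.

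Second, those $\nabla^2 k$ terms are the real obstruction, and they are not controlled by $\mathcal{E}_0$: they produce a contribution proportional to $Z(t)$ itself with a coefficient that blows up like $\ln\bigl(B/(t_0-t)^{7/2}\bigr)$ as $t\to t_0^-$. The paper handles this by invoking Hamilton's matrix Harnack estimate for positive solutions of the backward heat equation \eqref{MatrixHarnack}, which is the indispensable ingredient you never mention. Without it there is no way to bound the term $(t_0-t)\int_M \bigl[\nabla_i\nabla_j k-\tfrac{1}{k}\nabla_i k\nabla_j k+\tfrac{kg_{ij}}{2(t_0-t)}\bigr]\langle D^iV,D^jV\rangle\,\func{vol}$. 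Relatedly, because the resulting coefficient of $Z$ in the differential inequality is unbounded near $t_0$, the plain Gr\"{o}nwall estimate with constant $A$ does not directly apply; the paper introduces the integrating function $q$ in \eqref{qdef} and uses the integrability (not boundedness) of the logarithm to conclude. Your framing ``$e^{A(t-\tau)}\leq e^A$'' silently assumes $A$ is constant, which it is not.
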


\begin{proof}
Differentiating $Z(t)$, we find 
\begin{eqnarray}
\frac{dZ}{dt} &=&-\int_{M}\left\vert DV\right\vert ^{2}k\func{vol}+\left(
t_{0}-t\right) \int_{M}\frac{\partial }{\partial t}\left( \left\vert
DV\right\vert ^{2}\right) k\func{vol}  \label{DZdt} \\
&&-\left( t_{0}-t\right) \int_{M}\left\vert DV\right\vert ^{2}\Delta k\func{%
vol}.  \notag
\end{eqnarray}%
Consider the second term on the right-hand side of (\ref{DZdt}). We use the
evolution equation (\ref{dDVsqnocurv}) for $\left\vert DV\right\vert ^{2}$
and then integrate by parts:%
\begin{eqnarray}
\int_{M}\frac{\partial }{\partial t}\left( \left\vert DV\right\vert
^{2}\right) k\func{vol} &=&2\int_{M}\left( \left\langle D_{i}\left( \Delta
_{D}V\right) ,D^{i}V\right\rangle +\left\vert DV\right\vert ^{4}\right) k%
\func{vol}  \notag \\
&=&-2\int_{M}\left( \left\vert \Delta _{D}V\right\vert ^{2}-\left\vert
DV\right\vert ^{4}\right) k\func{vol}  \label{DZdt2a} \\
&&-2\int_{M}\left\langle \Delta _{D}V,D^{i}V\right\rangle \nabla _{i}k\func{%
vol}.  \notag
\end{eqnarray}%
Let us now rewrite (\ref{DZdt2a}) by completing the square $\left\vert
\Delta _{D}V+\left\vert DV\right\vert ^{2}V+\frac{1}{k}\nabla
_{i}kD^{i}V\right\vert ^{2}$: 
\begin{eqnarray}
\int_{M}\frac{\partial }{\partial t}\left( \left\vert DV\right\vert
^{2}\right) k\func{vol} &=&-2\int \left\vert \Delta _{D}V+\left\vert
DV\right\vert ^{2}V+\frac{1}{k}\nabla _{i}kD^{i}V\right\vert ^{2}k\func{vol}
\\
&&+2\int_{M}\left\langle \Delta _{D}V,D^{i}V\right\rangle \nabla _{i}k\func{%
vol}+2\int_{M}\frac{1}{k}\left( \nabla _{i}k\nabla _{j}k\right) \left\langle
D^{i}V,D^{j}V\right\rangle \func{vol}  \notag
\end{eqnarray}%
and finally, let us integrate the second term by parts, so that overall, we
get 
\begin{eqnarray}
\int_{M}\frac{\partial }{\partial t}\left( \left\vert DV\right\vert
^{2}\right) k\func{vol} &=&-2\int_{M}\left\vert \Delta _{D}V+\left\vert
DV\right\vert ^{2}V+\frac{1}{k}\nabla _{i}kD^{i}V\right\vert ^{2}k\func{vol}
\\
&&-2\int_{M}\left( \left\langle D^{j}V,D_{j}D^{i}V\right\rangle \nabla
_{i}k+\left\langle D^{j}V,D^{i}V\right\rangle \nabla _{j}\nabla _{i}k\right) 
\func{vol}  \notag \\
&&+2\int_{M}\frac{1}{k}\left( \nabla _{i}k\nabla _{j}k\right) \left\langle
D^{i}V,D^{j}V\right\rangle \func{vol}.  \notag
\end{eqnarray}%
Now consider the third term of the right-hand side of (\ref{DZdt}).
Integrating by parts, we get 
\begin{eqnarray}
\int_{M}\left\vert DV\right\vert ^{2}\Delta k\func{vol} &=&-2\int_{M}\left%
\langle D_{j}V,D^{i}D^{j}V\right\rangle \nabla _{i}k\func{vol}  \notag \\
&=&-2\int_{M}\left\langle D_{j}V,D^{j}D^{i}V+F^{ij}\left( V\right)
\right\rangle \nabla _{i}k\func{vol}  \notag \\
&=&-2\int_{M}\left\langle D_{j}V,D^{j}D^{i}V\right\rangle \nabla _{i}k\func{%
vol}  \label{DZdt2c} \\
&&+2\int_{M}\left\langle D^{i}V,F_{ij}\left( V\right) \right\rangle \nabla
^{j}k\func{vol}.  \notag
\end{eqnarray}%
Combining all the terms, (\ref{DZdt})\ becomes%
\begin{eqnarray}
\frac{dZ}{dt}+2W &=&-2\left( t_{0}-t\right) \int_{M}\left[ \nabla _{i}\nabla
_{j}k-\frac{1}{k}\left( \nabla _{i}k\nabla _{j}k\right) +\frac{kg_{ij}}{%
2\left( t_{0}-t\right) }\right] \left\langle D^{i}V,D^{j}V\right\rangle 
\func{vol}  \notag \\
&&-2\left( t_{0}-t\right) \int_{M}\left\langle D^{i}V,F_{ij}\left( V\right)
\right\rangle \nabla ^{j}k\func{vol},  \label{DZdt3}
\end{eqnarray}%
where we set 
\begin{equation}
W=\left( t_{0}-t\right) \int_{M}\left\vert \Delta _{D}V+\left\vert
DV\right\vert ^{2}V+\frac{1}{k}\nabla _{i}kD^{i}V\right\vert ^{2}k\func{vol}.
\label{Wdef}
\end{equation}%
Applying integration by parts to the last term in (\ref{DZdt3}), we get%
\begin{eqnarray}
\frac{dZ}{dt}+2W &=&-2\left( t_{0}-t\right) \int_{M}\left[ \nabla _{i}\nabla
_{j}k-\frac{1}{k}\left( \nabla _{i}k\nabla _{j}k\right) +\frac{kg_{ij}}{%
2\left( t_{0}-t\right) }\right] \left\langle D^{i}V,D^{j}V\right\rangle 
\func{vol}  \notag \\
&&-2\left( t_{0}-t\right) \int_{M}\left\langle D^{j}V,D^{i}\left(
F_{ij}\left( V\right) \right) \right\rangle k\func{vol}  \label{DZdt4} \\
&&-\left( t_{0}-t\right) \int_{M}\left\vert F\left( V\right) \right\vert
^{2}k\func{vol}.  \notag
\end{eqnarray}%
For the first term on the right-hand side of (\ref{DZdt4}) we apply
Hamilton's matrix Harnack inequality from \cite{HamiltonMatrixHarnack}:
there exist constants $B$ and $C$ that depend only on the geometry of $M$
such that 
\begin{equation}
\nabla _{i}\nabla _{j}k-\frac{1}{k}\left( \nabla _{i}k\nabla _{j}k\right) +%
\frac{kg_{ij}}{2\left( t_{0}-t\right) }\geq -C\left( 1+k\ln \left( \frac{B}{%
\left( t_{0}-t\right) ^{\frac{7}{2}}}\right) \right) g_{ij}.
\label{MatrixHarnack}
\end{equation}%
Note that the trace of this estimate gives the well-known Harnack estimate
by Li and Yau \cite{LiYau1}.

Let us now consider the second term in (\ref{DZdt4}). From (\ref{DivF}), we
know that%
\begin{eqnarray}
D^{a}\left( F_{ab}\left( V\right) \right)  &=&\left( \nabla ^{a}\func{Riem}%
_{ab}\right) \left( V\right) +\left( \func{Riem}_{ab}\nabla ^{a}\right)
\left( V\right) -\func{Riem}_{ab}\left( V\right) T^{a}  \notag \\
&&-\frac{1}{4}\left( \nabla ^{a}V\right) \left( \pi _{7}\func{Riem}\right)
_{ab}-\frac{1}{4}V\left( D^{a}\left( \pi _{7}\func{Riem}\right) _{ab}\right) 
\label{DivFV}
\end{eqnarray}%
and therefore, 
\begin{equation}
\left\vert \left\langle D^{j}V,D^{i}\left( F_{ij}\left( V\right) \right)
\right\rangle \right\vert \leq R_{1}\left( \left\vert DV\right\vert
^{2}+\left\vert DV\right\vert \right) ,  \label{DZdt4c}
\end{equation}%
where $R_{1}$ is a constant that depends on the curvature and torsion. Now
using (\ref{MatrixHarnack}) and (\ref{DZdt4c}) in (\ref{DZdt4}), and noting
that $\int_{M}k\func{vol}=1$, we have 
\begin{equation}
\frac{dZ}{dt}+2W\leq 2\left( t_{0}-t\right) R\left( \mathcal{E}\left(
t\right) +1\right) +2R\left( 1+\ln \left( \frac{B}{\left( t_{0}-t\right) ^{%
\frac{7}{2}}}\right) \right) Z,  \label{DZdt5}
\end{equation}%
where for convenience we now take $R$ to be the greater of $R_{1}$ and $C$.
Let 
\begin{equation}
q=\left( t_{0}-t\right) \left( \frac{9}{2}+\ln \left( \frac{B}{\left(
t_{0}-t\right) ^{\frac{7}{2}}}\right) \right) ,  \label{qdef}
\end{equation}%
so that 
\begin{equation}
\frac{dq}{dt}=-\left( 1+\ln \left( \frac{B}{\left( t_{0}-t\right) ^{\frac{7}{%
2}}}\right) \right)   \label{dqdt}
\end{equation}%
and hence, 
\begin{equation}
\frac{d}{dt}\left( e^{2Rq}Z\right) +2e^{2Rq}W\leq 2\left( t_{0}-t\right)
Ce^{2Rq}\left( \mathcal{E}\left( t\right) +\mathcal{E}\left( t\right) ^{%
\frac{1}{2}}\right) \leq 2Ce^{2Rq}\left( \mathcal{E}_{0}+\mathcal{E}_{0}^{%
\frac{1}{2}}\right)   \label{DZdt6}
\end{equation}%
for $t_{0}-1\leq t<t_{0}$. We can always take $B$ to be large enough so that 
$q\geq 0$ and we can also bound $e^{2Rq}$. Now integrating from $\tau $ to $%
t,$ we find that for any $t_{0}-1\leq \tau \leq t<t_{0}$ 
\begin{eqnarray}
Z\left( t\right)  &\leq &e^{2R\left( q\left( \tau \right) -q\left( t\right)
\right) }Z\left( \tau \right) +2Ce^{-2Rq\left( t\right) }\left( t-\tau
\right) \left( \mathcal{E}_{0}+\mathcal{E}_{0}^{\frac{1}{2}}\right)   \notag
\\
&\leq &CZ\left( \tau \right) +C\left( t-\tau \right) \left( \mathcal{E}_{0}+%
\mathcal{E}_{0}^{\frac{1}{2}}\right) ,  \label{Zineq}
\end{eqnarray}%
thus completing the proof.
\end{proof}

\begin{remark}
In \cite{HamiltonMatrixHarnack}, it is shown that in the case when $\nabla 
\func{Ric}=0$ and the sectional curvature of $M$ is non-negative, the
quantity of the left-hand side of (\ref{MatrixHarnack}) is actually
non-negative, and in \cite{HamiltonMonotonicity}, this leads to the
corresponding quantity $Z$ for the harmonic map flow and the Yang-Mills flow
to be monotonically decreasing along the flow. In our case, we have an
additional curvature term in (\ref{DZdt4}), which doesn't immediately give a
non-positive term in this case. On the contrary, in this case, it gives a
non-negative $\func{Riem}\left( \nabla v,\nabla v\right) $ term. Therefore,
it is not clear if there are some reasonable conditions under which $Z\left(
t\right) $ is monotonically decreasing.
\end{remark}

\section{$\protect\varepsilon $-regularity}

\label{secEpsReg}\setcounter{equation}{0}In this section we will use the
results on the behavior of $Z\left( t\right) $ from the previous section as
well as the a priori estimates from section \ref{secHeat}, to obtain an $%
\varepsilon $-regularity result and from it, long-time existence for small
initial energy density (i.e. small pointwise torsion).

Let $p_{x_{0},t_{0}}\left( x,t\right) $ be the backward heat kernel on $M$, that is,
the solution of the backward heat equation (\ref{backwardheat}) for $0\leq
t\leq t_{0}$ that converges to a delta function at $\left( x,t\right)
=\left( x_{0},t_{0}\right) $. Then, given a time-dependent octonion section $%
V\left( x,t\right) $, define the $\mathcal{F}$-functional%
\begin{equation}
\mathcal{F}\left( x_{0},t_{0},t\right) =\left( t_{0}-t\right)
\int_{M}\left\vert DV\left( x,t\right) \right\vert ^{2}p_{x_{0},t_{0}}\left(
x,t\right) \func{vol}\left( x\right) .  \label{Ffunctional}
\end{equation}%
Clearly this is just $Z$ with a particular choice of the backward heat
equation solution $k$. The key result in this section is the following.

\begin{theorem}
\label{thmEpsRegular}Given $\mathcal{E}_{0}$, there exist $\varepsilon >0\ $%
and $\beta >0,\ $both depending on $M$ and $\beta $ also depending on $%
\mathcal{E}_{0}$, such that if $V$ is a solution of the flow (\ref{heatflow}%
) on $M\times \lbrack 0,t_{0})\ $with energy bounded by $\mathcal{E}_{0}$,
and if 
\begin{equation}
\mathcal{F}\left( x_{0},t_{0},t\right) \leq \varepsilon  \label{epsreghypo}
\end{equation}%
for $t\in \lbrack t_{0}-\beta ,t_{0}),$ then $V$ extends smoothly to $%
U_{x_{0}}\times \lbrack 0,t_{0}]$ for some neighborhood $U_{x_{0}}$ of $%
x_{0} $ with $\left\vert DV\right\vert $ bounded uniformly.
\end{theorem}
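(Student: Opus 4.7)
The plan is to combine the almost-monotonicity of $Z(t)$ from Theorem \ref{thmMonotonicity}, applied with $k = p_{x_0,t_0}$, with the parabolic differential inequality for $\Lambda = |DV|^2$ from Lemma \ref{lemDVevol}, and to run a Moser-type iteration to upgrade the weighted $L^1$ smallness of $\Lambda$ encoded in $\mathcal{F}$ to a pointwise bound on $\Lambda$ in a parabolic neighborhood of $(x_0,t_0)$. Once $\Lambda$ is locally bounded, the derivative estimates of Theorem \ref{thmDkVest} (applied with parabolic cutoffs) together with the extension argument at the end of the proof of Theorem \ref{thmMain} yield a smooth extension to $U_{x_0} \times [0,t_0]$.

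\textbf{Step 1 (weighted-to-local energy).} The Gaussian lower bound for the backward heat kernel, $p_{x_0,t_0}(x,t) \geq c(t_0-t)^{-7/2} e^{-d(x,x_0)^2/C(t_0-t)}$, turns the hypothesis $\mathcal{F}(x_0,t_0,t) \leq \varepsilon$ into an inequality of the form
\begin{equation*}
\int_{B_r(x_0)} \Lambda(\cdot,t)\,\func{vol} \leq C\varepsilon\, r^{5},\qquad r = \sqrt{t_0-t}.
\end{equation*}
Applying Theorem \ref{thmMonotonicity} to the heat kernels $p_{x,t_0}$ centered at nearby points $x$ close to $x_0$, and using that $\mathcal{E}(t) \leq \mathcal{E}_0$ by Lemma \ref{lemfuncevol}, one propagates this smallness to obtain a scale-invariant bound on $\iint_{P_\rho(x_0,t_0)} \Lambda$ on a parabolic cylinder $P_\rho(x_0,t_0) = B_\rho(x_0)\times[t_0-\rho^2,t_0)$, provided $\beta \geq 4\rho^2$ and $\rho$ is small relative to the background geometry.

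\textbf{Step 2 (Moser iteration).} Lemma \ref{lemDVevol} gives
\begin{equation*}
(\partial_t - \Delta)\Lambda \;\leq\; C\Lambda^2 + C_1 \Lambda + C_2 \Lambda^{1/2},
\end{equation*}
where $C_1,C_2$ depend only on the background curvature and torsion. Testing this against $\Lambda^{p-1}\eta^2$ for a parabolic cutoff $\eta$ supported in $P_\rho(x_0,t_0)$, applying the Sobolev inequality on $M$, and iterating on a sequence of shrinking cylinders in the standard manner (as in Struwe and Chen--Ding for the harmonic map heat flow, and Weinkove for Yang--Mills), yields a parabolic mean-value estimate
\begin{equation*}
\sup_{P_{\rho/2}(x_0,t_0)} \Lambda \;\leq\; C\rho^{-9} \iint_{P_\rho(x_0,t_0)} \Lambda\,\func{vol}\,dt + \text{(lower-order curvature terms)}.
\end{equation*}
The quadratic nonlinearity $C\Lambda^2$ makes the iteration conditional: it closes provided the scale-invariant local energy $\rho^{-5}\iint_{P_\rho}\Lambda$ from Step 1 is below a universal threshold. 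Choosing $\varepsilon$ small enough (depending on $\mathcal{E}_0$ and $M$) makes this the case, and produces an a priori bound $\Lambda \leq C$ on $P_{\rho/2}(x_0,t_0)$.

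\textbf{Step 3 (smooth extension).} With $\Lambda$ bounded on a parabolic neighborhood of $(x_0,t_0)$, one runs the induction of Theorem \ref{thmDkVest} in its localized form: multiplying the evolution equations for $|D^mV|^2$ by parabolic cutoffs and using the maximum principle on the auxiliary quantities $h_m = (8L + |D^{m-1}V|^2)|D^mV|^2$ (as in \eqref{hD2Vdef}, \eqref{hD3def}) gives uniform bounds on $|D^mV|$ for all $m$ on a slightly smaller cylinder. Integration of $\partial_t V$ in time, exactly as in the proof of Theorem \ref{thmMain}, shows that $V(t)$ converges in every $C^m$-norm on some $U_{x_0}$ to a smooth limit $V(t_0)$, and short-time existence applied at $t_0$ produces the smooth extension past $t_0$.

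The main obstacle is Step 2: the inequality for $\Lambda$ is not scale-invariant once the lower-order $C_1\Lambda + C_2\Lambda^{1/2}$ terms are included, so one must choose $\rho$ small relative to the curvature and the torsion of the background $G_2$-structure to absorb them, which is precisely why $\beta$ depends on $M$ and $\tau$ depends on $\mathcal{E}_0$. The secondary subtlety, in Step 1, is that the $\mathcal{F}$-smallness is only assumed at the basepoint $x_0$, so one must use the almost-monotonicity \eqref{Ztmonotonicity} to compare $\mathcal{F}$-functionals based at $x_0$ with those based at nearby points $x$, picking up the harmless $(t-\tau)(\mathcal{E}_0 + \mathcal{E}_0^{1/2})$ error that is absorbed into the final choice of $\varepsilon$.
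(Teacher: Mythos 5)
Your proposal departs from the paper's argument in Step 2, and this is where the gap lies. The paper does not run a Moser iteration on $\Lambda=|DV|^2$. Instead it follows the Grayson--Hamilton point-picking strategy: one introduces the auxiliary function $q(x,t)=\min\{\rho-d(x_0,x),\sqrt{t-(t_0-\rho^2)}\}$, maximizes $q\,|DV|$ over $P_\rho(x_0,t_0)$ at an interior point $(\xi,\tau)$, rescales to set $1/r=|DV(\xi,\tau)|$, and then invokes Lemma~\ref{lemFdelta} — which says that a unit-scale density at $(\xi,\tau)$ forces $\mathcal{F}(\xi,\tau,\theta)\geq\delta$ — together with the almost-monotonicity (\ref{Ztmonotonicity}) and the heat-kernel comparison (\ref{heatkernelcomp}) to derive a contradiction with the hypothesis (\ref{epsreghypo}). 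This contradiction is what yields the pointwise bound on $|DV|$, after which Theorem~\ref{thmLocDkVest} gives the higher-derivative bounds. Your Steps 1 and 3 are compatible with this structure (Step 3 is essentially the paper's use of Theorem~\ref{thmLocDkVest} plus the extension argument from Theorem~\ref{thmMain}), but you have replaced the paper's central mechanism — Lemma~\ref{lemFdelta} and the point-picking contradiction — with an asserted parabolic mean-value inequality.

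The issue with Step 2 as written is that testing $\partial_t\Lambda\leq\Delta\Lambda+C\Lambda^2$ against $\Lambda^{p-1}\eta^2$ and applying Sobolev produces a term $\int\Lambda\cdot(\Lambda^{p/2}\eta)^2$ whose absorption requires smallness of $\|\Lambda\|_{L^{n/2}(\mathrm{supp}\,\eta)}$, i.e.\ $L^{7/2}$ here, whereas the $\mathcal{F}$-hypothesis only delivers scale-invariant $L^1$ (Morrey) smallness. An $L^1$ Morrey bound does not directly absorb the quadratic reaction term; to get a closed iteration one must exploit the at-all-scales and at-all-nearby-basepoints smallness that comes from iterating the almost-monotonicity, which is a nontrivial further step you have not carried out, and even then the iteration with a superlinear nonlinearity is delicate. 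Your attribution of the Moser-iteration route to Struwe, Chen--Ding, and Weinkove is also misplaced: those papers establish $\varepsilon$-regularity via a blowup/point-picking contradiction (essentially what the paper does), not by a direct Moser iteration on the energy density. Finally, there is a scaling inconsistency: with $\Lambda\sim\lambda^{-2}$ under parabolic rescaling, the scale-invariant spacetime quantity is $\rho^{-7}\iint_{P_\rho}\Lambda\,\func{vol}\,dt$ (matching the $\rho^{-9}$ prefactor in your mean-value inequality after accounting for $\sup\Lambda\sim\rho^{-2}$), not $\rho^{-5}\iint_{P_\rho}\Lambda$; the latter tends to zero trivially as $\rho\to 0$ and is not the right closing condition. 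I would recommend replacing Step 2 by the paper's point-picking argument, for which you would need to state and prove the analogue of Lemma~\ref{lemFdelta} — that a pointwise lower bound $|DV(\xi,\tau)|=1/r$ together with $|DV|\leq 2/r$ on $P_r(\xi,\tau)$ forces $\mathcal{F}(\xi,\tau,\tau-\gamma r^2)\geq\delta$ — and this in turn relies on the local derivative estimates and Hamilton's lower bound for the heat kernel.
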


Before we go on to prove Theorem \ref{thmEpsRegular}, here is an important
corollary.

\begin{corollary}
\label{corrGlobalExist}There exists an $\varepsilon >0$ such that if the
initial energy density $\Lambda _{0}=\left\vert DV\right\vert ^{2}\ $%
satisfies $\Lambda _{0}<\varepsilon ,$ then a solution $V$ of the flow (\ref%
{heatflow}) exists for all $t\geq 0$. The limit $V_{\infty
}=\lim_{t\longrightarrow \infty }V\left( t\right) $ corresponds to a $G_{2}$%
-structure with divergence-free torsion.
\end{corollary}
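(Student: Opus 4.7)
The plan is to combine the $\varepsilon$-regularity of Theorem \ref{thmEpsRegular} with the short-time estimate of Theorem \ref{thmLambdaest} to establish global existence, and then to pass to the limit using the energy monotonicity of Lemma \ref{lemfuncevol} together with the uniform derivative bounds of Theorem \ref{thmDkVest}.

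For global existence, first note that if $\Lambda_0 < \varepsilon$, then $\mathcal{E}_0 \leq \Lambda_0 \cdot \func{vol}(M)$ is also small. Because the heat kernel $p_{x_0,t_0}$ integrates to $1$ on $M$, the functional obeys the pointwise bound
\begin{equation*}
\mathcal{F}(x_0,t_0,t) \;\leq\; (t_0 - t)\,\Lambda(t)
\end{equation*}
on any interval where $V$ exists. Thus on the short-time interval produced by Theorem \ref{thmLambdaest}, one may choose $\beta$ small enough that $\mathcal{F}(x_0,t_0,t) \leq \varepsilon$ on $[t_0-\beta, t_0)$ for every $t_0$ in that interval, and Theorem \ref{thmEpsRegular} extends $V$ smoothly across $t_0$. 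The heart of the argument is to iterate this across $[0,\infty)$: on each unit-length window, apply the pseudo-monotonicity formula $Z(t) \leq C Z(\tau) + C(t-\tau)(\mathcal{E}_0 + \mathcal{E}_0^{1/2})$ from Theorem \ref{thmMonotonicity}, with $Z = \mathcal{F}$, taking $\Lambda_0$ small enough that the drift term $C(t-\tau)(\mathcal{E}_0+\mathcal{E}_0^{1/2})$ together with $C Z(\tau)$ remains below $\varepsilon/2$; the pointwise control of $\Lambda$ that $\varepsilon$-regularity should yield in a parabolic neighborhood of $(x_0,t_0)$ is what permits restarting this loop. Ruling out finite-time blow-up this way, Theorem \ref{thmMain} then gives existence for all $t \geq 0$.

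For convergence, Lemma \ref{lemfuncevol} gives $\mathcal{E}(t) \searrow \mathcal{E}_\infty \geq 0$ and $\int_0^\infty \|\func{div}\, T^{(V)}(t)\|_{L^2}^2 \, dt \leq \tfrac{1}{2}\mathcal{E}_0 < \infty$, so there is a sequence $t_k \to \infty$ with $\|\func{div}\, T^{(V)}(t_k)\|_{L^2} \to 0$. Since global existence comes with a uniform bound on $\Lambda(t)$ (from the iterated $\varepsilon$-regularity), Theorem \ref{thmDkVest} provides uniform $C^m$-bounds on $V(t)$ for every $m$, so $\{V(t_k)\}$ is precompact in every $C^m$ and a diagonal subsequence converges smoothly to a limit $V_\infty$ with $\func{div}\, T^{(V_\infty)}=0$. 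To upgrade to $V(t) \to V_\infty$, use (\ref{DVdtdivT}) in the form $\|\partial_t V\|_{L^2} = \|\func{div}\, T^{(V)}\|_{L^2}$ and a Lojasiewicz--Simon type inequality near $V_\infty$, $\|\func{div}\, T^{(V)}\|_{L^2} \geq c\,(\mathcal{E}(V)-\mathcal{E}_\infty)^\theta$ for some $\theta \in (0,1/2]$, to conclude $\int_0^\infty \|\partial_t V\|_{L^2}\, dt < \infty$ and hence full convergence.

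The principal obstacle is the uniform-in-time propagation of the $\mathcal{F}$-bound: the pseudo-monotonicity carries a linearly growing correction in $t-\tau$, so one cannot integrate it over all of $[0,\infty)$ in one shot; instead, the pointwise $\Lambda$-control that the $\varepsilon$-regularity machinery outputs must be fed back into the next application to preserve smallness on the next window, and checking that the constants close up requires carefully tracking how $\varepsilon$, $\beta$, the heat-kernel constants, and the curvature constants $R_1, R_2$ combine. A secondary difficulty is the Lojasiewicz--Simon step, since the torsion-free locus is typically a degenerate critical set; if the statement is read only as subsequential convergence, this step can be bypassed.
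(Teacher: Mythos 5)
Your global-existence argument takes a genuinely different route from the paper's, and it has exactly the gap you flag at the end: you propose to iterate the pseudo-monotonicity (\ref{Ztmonotonicity}) window by window across all of $[0,\infty)$, but each application multiplies by $C>1$, so the constants do not close, and the sketch of ``feeding back the pointwise $\Lambda$-control from $\varepsilon$-regularity'' is not made precise (Theorem \ref{thmEpsRegular} gives smooth extension past $t_0$, not a quantitative decay of $\Lambda$ that would reset the smallness on the next window). The paper avoids iteration entirely with a one-shot contradiction at the maximal existence time. Suppose $t_{\max}<\infty$ and apply Theorem \ref{thmMonotonicity} at $t_0=t_{\max}$ for a single $\tau$ with $t_{\max}-\tau\leq 1$, so the drift term is at most $C(\mathcal{E}_0+\mathcal{E}_0^{1/2})$. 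The standard heat-kernel upper bound gives
\begin{equation*}
\mathcal{F}\left(x_0,t_{\max},\tau\right)\leq \frac{C}{\left(t_{\max}-\tau\right)^{5/2}}\,\mathcal{E}\left(\tau\right),
\end{equation*}
and $\mathcal{E}(\tau)\leq\mathcal{E}_0\leq C\Lambda_0$ by Lemma \ref{lemfuncevol}. If $t_{\max}\geq 1$, take $\tau=t_{\max}-1$, so the prefactor is $O(1)$. If $t_{\max}<1$, which is the case when blow-up occurs quickly, take $\tau=t_{\max}/2$ and invoke the lower blow-up bound (\ref{lamdbatineq}) from Theorem \ref{thmMain}: at $t=0$ it gives $1/t_{\max}\leq C\Lambda_0$, hence $(t_{\max}-\tau)^{-5/2}\leq C\Lambda_0^{5/2}$. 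Either way $\mathcal{F}(x_0,t_{\max},t)$ is controlled by a power of $\Lambda_0$ on a uniform strip just below $t_{\max}$, so Theorem \ref{thmEpsRegular} extends the flow past $t_{\max}$ and short-time existence contradicts maximality. The tool your proof is missing is precisely (\ref{lamdbatineq}): it converts smallness of $\Lambda_0$ into a quantitative lower bound on $t_{\max}$, which in turn bounds the heat-kernel prefactor, so no iteration is ever needed.

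For convergence the situation is reversed: the paper's passage from ``$\mathcal{E}(t)$ monotone decreasing and bounded below'' to ``$\func{div}T^{(V)}\longrightarrow 0$'' is terse; strictly one first gets convergence along a subsequence, and uniform higher-derivative bounds from Theorem \ref{thmDkVest} are what upgrade this to full convergence of $\func{div}T^{(V)}$ in time. The paper does not invoke a Lojasiewicz--Simon inequality for full $C^\infty$-convergence of $V(t)$ itself, so your final paragraph is extra machinery beyond what the paper does; if the corollary is read as subsequential convergence, your argument without that paragraph matches the paper's, and your scruple about the degenerate critical set is a reasonable one that the paper does not address.
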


\begin{proof}
Suppose $V$ is a solution of the flow (\ref{heatflow}) on a maximal time
interval $[0,t_{\max })$ with initial energy $\mathcal{E}_{0}$. By Theorem %
\ref{thmMonotonicity}, $\mathcal{F}$ satisfies the following inequality for
any $t\ $and $\tau $ satisfying $t_{\max }-1\leq \tau \leq t<t_{\max }$ and
any $x_{0}\in M$ 
\begin{equation}
\mathcal{F}\left( x_{0},t_{\max },t\right) \leq C\mathcal{F}\left(
x_{0},t_{\max },\tau \right) +C\left( t-\tau \right) \left( \mathcal{E}_{0}+%
\mathcal{E}_{0}^{\frac{1}{2}}\right) .  \label{Fx0tmaxineq}
\end{equation}%
Using standard properties of the heat kernel, for some constant $C$ we have 
\begin{equation*}
\mathcal{F}\left( x_{0},t_{\max },\tau \right) \leq \frac{C}{\left( t_{\max
}-\tau \right) ^{\frac{5}{2}}}\mathcal{E}\left( \tau \right)
\end{equation*}%
If $t_{\max }\geq 1$, then set $\tau =t_{\max }-1$, and then we get a bound
on $\mathcal{F}$ in terms of $\mathcal{E}$. Otherwise, set for example $\tau
=\frac{t_{\max }}{2}$, and from (\ref{lamdbatineq}) we have $\frac{1}{%
t_{\max }}\leq 2\left( \Lambda _{0}+C_{0}\right) $ for a constant $C_{0}$
that only depends on the background geometry, hence in this case, 
\begin{equation*}
\mathcal{F}\left( x_{0},t_{\max },\tau \right) \leq C\left( \Lambda
_{0}+C_{0}\right) ^{\frac{5}{2}}\mathcal{E}\left( \tau \right) .
\end{equation*}%
Now, $\mathcal{E}\left( \tau \right) \leq \mathcal{E}_{0}\leq \Lambda
_{0}Vol\left( M\right) $, where $Vol\left( M\right) $ is the total volume of
the manifold, so overall from (\ref{Fx0tmaxineq}) we obtain a bound on $%
\mathcal{F}\left( x_{0},t_{\max },t\right) $ in terms of $\Lambda _{0}$.
Hence, choosing $\Lambda _{0}$ small enough, the conditions of Theorem \ref%
{thmEpsRegular} are satisfied, and the solution extends smoothly to $\left[
0,t_{\max }\right] $. Restarting the flow from $t=t_{\max }$, with initial
energy $\mathcal{E}\left( t_{\max }\right) \leq \mathcal{E}_{0}$, by
short-time existence we can then extend it to $[0,t_{\max }+\varepsilon )\ \ 
$for some $\varepsilon >0$, thus contradicting the maximality of $t_{\max }$.

Now we have a solution that exists for all $t>0,$ with $\left\vert
DV\right\vert $ and, from Theorem \ref{thmDkVest}, all higher derivatives
bounded uniformly. This means that choosing $\Lambda _{0}$ sufficiently
small, we can make sure that $\left\vert DV\right\vert $ is also
sufficiently small, so that it satisfies the conditions of Corollary \ref%
{corrDivTdecay} for all time. As in \cite{DGKisoflow}, this then implies
that $\int_{M}\left\vert \func{div}T^{\left( V\right) }\right\vert ^{2}\func{%
vol}\longrightarrow 0$ exponentially and hence, $V\left( t\right) $
converges in $L^{1}$ to a unique limit $V_{\infty }.$ By uniform bounds on
the derivatives, the limit is then smooth and has $\func{div}T^{\left(
V\right) }=0$.
\end{proof}

To prove Theorem \ref{thmEpsRegular}, similarly as in \cite{GraysonHamilton}%
, we need to carefully understand the local behavior of solutions to the
flow (\ref{heatflow}).

\begin{definition}
For any $x_{0}\in M$ and $t_{0}\in \mathbb{R}$, define a parabolic cylinder $%
P_{r}\left( x_{0},t_{0}\right) =\bar{B}_{r}\left( x_{0}\right) \times \left[
t_{0}-r^{2},t_{0}\right] ,$ where $\bar{B}_{r}\left( x_{0}\right) $ is a
closed geodesic ball of radius $r$ centered at $x_{0}$.
\end{definition}

We have the following useful Lemma from \cite{GraysonHamilton}.

\begin{lemma}[{\protect\cite[Lemma 2.1]{GraysonHamilton}}]
\label{lemGraysonHamilton}Let $M$ be a compact manifold. There exists a
constant $s>0,$ and for every $\gamma <1,$ a constant $C_{\gamma }$, such
that if $h$ is a smooth function satisfying 
\begin{equation}
\frac{\partial h}{\partial t}\leq \Delta h-h^{2}  \label{dhhsqineq}
\end{equation}%
whenever $h\geq 0\ $in $P_{r}\left( x_{0},t_{0}\right) $ for some $r\leq s$,
then 
\begin{equation}
h\leq C_{\gamma }\left( \frac{1}{r^{2}}+\frac{1}{t}\right)
\end{equation}%
on $P_{\gamma r}\left( x_{0},t_{0}\right) $.
\end{lemma}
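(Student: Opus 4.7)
The plan is to establish this local estimate via a parabolic maximum-principle argument with a carefully chosen space-time cutoff. The two-term bound $1/r^2 + 1/t$ reflects two distinct mechanisms: the $1/r^2$ piece comes from localizing to $B_r(x_0)$ via a spatial cutoff, while the $1/t$ piece is the universal large-data bound for the scalar ODE $u' = -u^2$, whose solution with infinite initial data is precisely $u(t) = 1/t$.

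First, fix $s > 0$, depending only on $M$, so that for all $r \leq s$ and $x_0 \in M$, the ball $B_r(x_0)$ lies well inside the injectivity radius, and the Hessian comparison theorem yields only bounded curvature corrections to Euclidean-style Laplacian estimates on $M$; compactness of $M$ ensures such a uniform $s$ exists. Setting $\sigma = (1-\gamma)/2$, construct a smooth nonnegative cutoff $\eta$ with $\eta \equiv 1$ on $\bar{B}_{\gamma r}(x_0)$, supported in $B_{(\gamma+\sigma)r}(x_0)$, and satisfying the standard quantitative bounds $|\nabla \eta|^2 \leq C\eta/(\sigma r)^2$ and $|\Delta \eta| \leq C/(\sigma r)^2$. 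Introduce two auxiliary functions, $F_1(x,t) = t\,\eta^2(x)\,h(x,t)$ and $F_2(x,t) = (t - (t_0-r^2))\,\eta^2(x)\,h(x,t)$, each vanishing on the appropriate portion of the parabolic boundary of $P_r(x_0,t_0)$.

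The core computation is to evaluate $(\partial_t - \Delta)F_i$ using the hypothesis $\partial_t h \leq \Delta h - h^2$, which is valid wherever $h \geq 0$ (if $h < 0$ throughout, there is nothing to prove). At an interior space-time maximum of $F_i$ one has $\partial_t F_i \geq 0$, $\nabla F_i = 0$, $\Delta F_i \leq 0$; the identity $\nabla F_i = 0$ lets one express $\nabla h$ algebraically in terms of $h$ and $\nabla\eta/\eta$, converting the problematic mixed term $\nabla\eta \cdot \nabla h$ into an $h|\nabla\eta|^2/\eta$ contribution. The dominant negative term is then $-\phi_i(t)\eta^2 h^2$ with $\phi_1 = t$ and $\phi_2 = t - (t_0 - r^2)$; via Young's inequality this quadratic dissipation absorbs all the positive linear-in-$h$ errors arising from $\phi_i'$, $|\nabla\eta|^2/\eta$, $\Delta\eta$, and the curvature corrections. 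One obtains $F_i \leq C_\gamma$ uniformly on $P_r$, and restricting to $\{\eta \equiv 1\}$ yields $h \leq C_\gamma/t$ from $F_1$ and, since $t - (t_0-r^2) \geq (1-\gamma^2)r^2$ on $P_{\gamma r}$, the bound $h \leq C_\gamma/r^2$ from $F_2$. Summing the two gives the stated estimate.

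The main obstacle I anticipate is the algebraic bookkeeping at the maximum point needed to verify that the quadratic dissipation really absorbs every error term with the correct scaling in $\sigma$ and $r$; in particular, the constant $C_\gamma$ deteriorates as $\gamma \to 1$ through negative powers of $(1-\gamma)$, and one must track these carefully. A secondary technical point is ensuring that the curvature corrections on $M$, which enter through the comparison estimates for $\Delta d(\cdot,x_0)^2$, contribute only bounded extra terms, which is exactly the reason for constraining $r \leq s$ with $s$ small and uniform.
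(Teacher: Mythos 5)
The paper quotes this lemma from Grayson and Hamilton without reproducing a proof, so your blind reconstruction is being compared against a citation rather than against an argument actually written in the paper. Your strategy---localize with a spatial cutoff $\eta$, weight by a temporal factor vanishing on the lower boundary, and run the parabolic maximum principle so the $-h^{2}$ dissipation absorbs all cutoff errors via Young's inequality---is the classical Bernstein/Moser-type interior estimate and is the right way to prove a statement of this shape; it is essentially the argument of the cited reference. Your handling of the geometry (Hessian comparison to control $\Delta\eta$, uniform $s$ via compactness, constant degenerating in negative powers of $1-\gamma$) is also correctly anticipated.

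There is, however, one genuine gap. The paper's convention is $P_{r}(x_{0},t_{0})=\bar B_{r}(x_{0})\times[t_{0}-r^{2},t_{0}]$, so the lower temporal boundary sits at $t=t_{0}-r^{2}$, not at $t=0$. Your $F_{2}=(t-(t_{0}-r^{2}))\,\eta^{2}h$ vanishes there, the interior maximum argument applies, and you obtain $h\le C_{\gamma}/(t-(t_{0}-r^{2}))$, hence $h\le C_{\gamma}/((1-\gamma^{2})r^{2})$ on $P_{\gamma r}$, which already gives the stated estimate up to the constant. But $F_{1}=t\,\eta^{2}h$ does \emph{not} vanish on the lower boundary unless $t_{0}=r^{2}$, so your claim that both $F_{1}$ and $F_{2}$ ``vanish on the appropriate portion of the parabolic boundary'' is false in general, and the maximum of $F_{1}$ could occur at $t=t_{0}-r^{2}$ where nothing is known. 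The fix is either to drop $F_{1}$ entirely---observing that when $t_{0}\ge r^{2}$ one has $t\ge(1-\gamma^{2})r^{2}$ on $P_{\gamma r}$, so $1/t\lesssim 1/r^{2}$ and the $F_{2}$ bound already subsumes the $1/t$ term---or to state explicitly that the $1/t$ contribution is relevant only when the cylinder bottom coincides with the start of the flow, $t_{0}-r^{2}=0$, in which case $F_{1}$ and $F_{2}$ are the same function. Once this is cleaned up, the rest of your bookkeeping at the interior maximum (using $\nabla F_{i}=0$ to trade $\nabla h$ for $h\nabla\eta/\eta$, Young's inequality to absorb the $\phi_{i}'$, $|\nabla\eta|^{2}/\eta$, and $\Delta\eta$ errors) is correct.
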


Lemma \ref{lemGraysonHamilton} can be used to modify the proof of Theorem %
\ref{thmDkVest} to give a local version on a parabolic cylinder. Define $%
\Lambda _{B_{r}\left( x_{0}\right) }\left( t\right) =\sup_{x\in B_{r}\left(
x_{0}\right) }\Lambda \left( x,t\right) $ and $\Lambda _{B_{r}\left(
x_{0}\right) }^{\left( m\right) }\left( t\right) =\sup_{x\in B_{r}\left(
x_{0}\right) }\left( \left\vert D^{m}V\left( x,t\right) \right\vert
^{2}\right) $.

\begin{theorem}
\label{thmLocDkVest}There exists a constant $s>0$ and, for any positive
integer $m\geq 2$, a constant $C_{m},$ that only depend on $M$ and the
background $G_{2}$-structure, such that, if $V\left( t\right) $ is a
solution to (\ref{heatflow}) in a parabolic cylinder $P_{r}\left(
x_{0},t_{0}\right) $ for $r<\min \left\{ s,1\right\} $ and satisfies $%
\Lambda _{B_{r}\left( x_{0}\right) }\leq K$ for $K>\frac{1}{r^{2}}$, then 
\begin{equation}
\Lambda _{B_{r_{k}}\left( x_{0}\right) }^{\left( m\right) }\left( t\right)
\leq C_{m}K^{m}\ \text{on }P_{r_{k}}\left( x_{0},t_{0}\right) ,
\end{equation}%
where $r_{k}=2^{1-k}r$.
\end{theorem}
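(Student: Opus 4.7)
The plan is to run the same induction as in the proof of Theorem \ref{thmDkVest}, but to replace the global Maximum Principle at each step by Lemma \ref{lemGraysonHamilton}. The pointwise computations from that earlier proof---in particular the differential inequality (\ref{D2Vheq}) and its higher-order analogue (\ref{d3vheq})---are already local, so they continue to hold throughout any parabolic cylinder $P_r(x_0, t_0)\subset M\times [0,t_0]$ on which $\Lambda\leq K$. What changes is how one extracts an $L^\infty$-bound from each such inequality. The key trick is a shift-and-rescale $\tilde h=(h-A)/B$ (with $A$ and $B$ appropriate powers of $K$) which converts an inequality of the form $\partial_t h\leq \Delta h-h^2/(CK^{2\ell})+CK^{2\ell+2}$ into the normalized form $\partial_t \tilde h\leq \Delta \tilde h-\tilde h^2$ wherever $\tilde h\geq 0$; Lemma \ref{lemGraysonHamilton} then applies, and the assumption $K>1/r^2$ is exactly what is needed to turn its output $\tilde h\leq C_\gamma/r^2$ back into a bound $\leq C_m K^m$ after inverting the rescaling.

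In detail, for the base case $m=2$ I would take $h=(8K+|DV|^2)|D^2V|^2$, use (\ref{D2Vheq}), choose $B\sim K^2$ and $A\sim K^3$ so that the inhomogeneous term $CK^4$ is absorbed into half of the quadratic sink (this requires $h\geq A$, i.e.\ $\tilde h\geq 0$), and apply Lemma \ref{lemGraysonHamilton} with $\gamma=1/2$ on $P_r(x_0,t_0)$ to obtain $\tilde h\leq C/r^2$ on $P_{r/2}$; this translates back to $|D^2V|^2\leq h/(8K)\leq C_2 K^2$ on $P_{r_2}$. For the inductive step I assume $|D^iV|^2\leq C_i K^i$ on $P_{r_i}$ for $i<m$, and form the analogous quantity $h=(8L+|D^{m-1}V|^2)|D^mV|^2$ with $L$ any constant satisfying $\sup_{P_{r_{m-1}}}|D^{m-1}V|^2\leq L\leq CK^{m-1}$. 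The derivation leading to (\ref{d3vheq}) goes through verbatim, using the inductive bounds to control all curvature-and-lower-derivative terms and Young's inequality (as in the passage from (\ref{D2Vbound}) to (\ref{D2Vheq})) to absorb the gradient cross-term $\nabla|D^{m-1}V|^2\cdot\nabla|D^mV|^2$ into the $|D^{m+1}V|^2$ sink; the result is an inequality of the form $\partial_t h\leq \Delta h-h^2/(CK^{2m-2})+CK^{2m}$, and the same shift-and-rescale plus Lemma \ref{lemGraysonHamilton} produce $|D^mV|^2\leq C_m K^m$ on the halved cylinder $P_{r_m}=P_{r_{m-1}/2}$.

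The main obstacle is bookkeeping rather than any single analytic step. At order $m$, the evolution of $D^m V$ obtained by iterating (\ref{DLapcomm}) and the commutator (\ref{d2D}) contains a large number of cross-terms mixing $\nabla^j\func{Riem}$, $\nabla^j\tilde T$, and covariant derivatives of $V$ of every order up to $m+1$. One must check, term by term, that each one falls---after using the inductive bounds and Young's inequality---either into the quadratic sink $h^2/(CK^{2m-2})$ or into an inhomogeneous source bounded by $CK^{2m}$, and that the various absorption constants can be chosen consistently across the induction. Beyond this, the role of the assumption $K>1/r^2$ must be tracked carefully, since it is precisely what keeps the output $C/r^2$ of Lemma \ref{lemGraysonHamilton} comparable to $K$ (and not larger), allowing the final conversion to $C_m K^m$.
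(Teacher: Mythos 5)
Your proposal matches the paper's proof essentially step for step: the paper likewise reuses the pointwise differential inequalities (\ref{D2Vheq}) and (\ref{d3vheq}) from Theorem \ref{thmDkVest}, introduces the shifted-rescaled quantity $\tilde h = h/(CK^2) - K$ (your $A\sim K^3$, $B\sim K^2$) to reduce to the normalized form $\partial_t\tilde h\le\Delta\tilde h-\tilde h^2$ when $\tilde h\ge 0$, applies Lemma \ref{lemGraysonHamilton} with $\gamma=\tfrac12$, and uses $K>1/r^2$ to convert $\tilde h\le C/r^2$ into $\tilde h\le CK$ and hence $|D^2V|^2\le CK^2$ on the halved cylinder, then iterates. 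No gaps; the approach is the same.
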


\begin{proof}
The proof is essentially the same as that of Theorem \ref{thmDkVest}. As in 
\cite{GraysonHamilton}, the main difference is that when we obtain differential
inequalities (\ref{D2Vheq}) and (\ref{d3vheq}) for $h$ and for $%
\left\vert D^{2}V\right\vert ^{2}$ and $\left\vert D^{3}V\right\vert ^{2}$,
respectively, we need to make further changes of variables to get these
inequalities into the form (\ref{dhhsqineq}). Then, rather than using the
Maximum Principle directly, we need to apply Lemma \ref{lemGraysonHamilton}.
In particular, when proving the bound for $\left\vert D^{2}V\right\vert ^{2}$%
, we take $h=\left( 8K+\Lambda \left( x,t\right) \right) \left\vert
D^{2}V\right\vert ^{2}$ as in (\ref{hD2Vdef}) and then as in (\ref{D2Vheq}),
we obtain 
\begin{equation}
\frac{\partial h}{\partial t}\leq \Delta h-\frac{h^{2}}{CK^{2}}+CK^{4}.
\end{equation}%
Now, let 
\begin{equation}
\tilde{h}=\frac{h}{CK^{2}}-K  \label{htilde}
\end{equation}%
for the same constant $C$. Hence, we have 
\begin{eqnarray*}
\frac{\partial \tilde{h}}{\partial t} &\leq &\frac{1}{CK^{2}}\Delta h-\frac{%
h^{2}}{C^{2}K^{4}}+K^{2} \\
&\leq &\Delta \tilde{h}-\tilde{h}^{2}-2K\tilde{h}
\end{eqnarray*}%
and therefore, when $\tilde{h}\geq 0$ we have 
\begin{equation*}
\frac{\partial \tilde{h}}{\partial t}\leq \Delta \tilde{h}-\tilde{h}^{2}.
\end{equation*}%
Therefore, applying Lemma \ref{lemGraysonHamilton}\ with $\gamma =\frac{1}{2}
$, we find that for some constant $C$, on $P_{\frac{r}{2}}\left(
x_{0},t_{0}\right) $ we have 
\begin{equation}
\tilde{h}\leq \frac{C}{r^{2}}\leq CK
\end{equation}%
and thus 
\begin{equation*}
h\leq CK^{3}\text{,}
\end{equation*}%
and for some constant $C_{2}$, 
\begin{equation*}
\Lambda _{B_{r_{2}}\left( x_{0}\right) }^{\left( 2\right) }\left( t\right)
\leq C_{2}K^{2}.
\end{equation*}%
A similar argument follows for higher derivatives.
\end{proof}

We now need a lemma similar to Lemma 3.1 in \cite{GraysonHamilton}.

\begin{lemma}
\label{lemFdelta}There exist constants $\delta >0$ and $\gamma \in \left(
0,1\right) $ that depend only on $M\ $and the background $G_{2}$-structure$,$
such that, if $V$ is a solution of the flow (\ref{heatflow}) in a parabolic
cylinder $P_{r}\left( x_{0},t_{0}\right) $ for $r\leq 1$ such that 
\begin{equation*}
\left\vert DV\left( x_{0},t_{0}\right) \right\vert =\frac{1}{r}
\end{equation*}%
and 
\begin{equation*}
\left\vert DV\left( x,t\right) \right\vert \leq \frac{2}{r}
\end{equation*}%
for all $\left( x,t\right) \in $ $P_{r}\left( x_{0},t_{0}\right) ,$ then for 
$\theta =t_{0}-\gamma r^{2}$ we have%
\begin{equation}
\mathcal{F}\left( x_{0},t_{0},\theta \right) \geq \delta .  \label{Fdelta}
\end{equation}
\end{lemma}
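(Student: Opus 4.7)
My plan is to propagate the pointwise lower bound $|DV(x_0,t_0)| = 1/r$ outward in space and backward in time to a small parabolic subcylinder, then integrate against the heat kernel, which at time $\theta = t_0 - \gamma r^2$ concentrates on a Gaussian of width $\sqrt{\gamma}\, r$. This is the standard template for energy concentration lemmas underlying $\varepsilon$-regularity, as in Grayson--Hamilton and Weinkove.

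First, I would establish uniform $C^2$ and $C^3$ bounds on $V$ in a parabolic subcylinder. Since the hypothesis $|DV| \leq 2/r$ gives $\Lambda_{B_r(x_0)} \leq 4/r^2 =: K$ with $K \geq 1$ (because $r \leq 1$), Theorem \ref{thmLocDkVest} applies and yields $|D^m V|^2 \leq C_m/r^{2m}$ on the nested subcylinder $P_{r/2^{m-1}}(x_0,t_0)$ for each $m \geq 2$. Substituting these into the evolution equation (\ref{dDVdt}) for $DV$, and using the uniform bounds on $\func{Riem}$, $\func{div}\func{Riem}$ and the background torsion $\tilde T$ that follow from compactness of $M$, one obtains a pointwise bound
\[
\left| \frac{\partial (DV)}{\partial t} \right| \leq \frac{C}{r^3}
\]
on $P_{r/4}(x_0,t_0)$.

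Next, these two-sided derivative bounds let me propagate the lower bound. For $\rho, \gamma \in (0, 1/4)$ to be chosen and $\theta = t_0 - \gamma r^2$, integrating $|\partial_t (DV)| \leq C/r^3$ in time gives $|DV(x_0,\theta)| \geq 1/r - C\gamma/r$, and integrating $|D^2V| \leq C/r^2$ along minimizing geodesics from $x_0$ gives $|DV(x,\theta)| \geq |DV(x_0,\theta)| - C\rho/r$ for every $x \in B_{\rho r}(x_0)$. Choosing $\rho$ and $\gamma$ small enough, depending only on the universal constants above, yields $|DV(x,\theta)|^2 \geq 1/(4r^2)$ throughout $B_{\rho r}(x_0)$. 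Combining this with the standard short-time Gaussian lower bound $p_{x_0,t_0}(x,\theta) \geq c_1 (\gamma r^2)^{-7/2} e^{-\rho^2/(4\gamma)}$ on $B_{\rho r}(x_0)$, valid when $\gamma r^2$ lies below the injectivity radius scale, and with $\func{Vol}(B_{\rho r}(x_0)) \geq c_2 (\rho r)^7$, all factors of $r$ cancel in the definition of $\mathcal{F}$ and one obtains
\[
\mathcal{F}(x_0,t_0,\theta) \geq \gamma r^2 \cdot \frac{1}{4r^2} \cdot c_1 c_2 \frac{(\rho r)^7 e^{-\rho^2/(4\gamma)}}{(\gamma r^2)^{7/2}} = \delta,
\]
with $\delta > 0$ depending only on $M$ and the background $G_2$-structure.

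The main technical obstacle is Step 1: bounding $\partial_t(DV)$ from (\ref{dDVdt}) requires the schematic $\func{Riem} \ast DV$ and $(\func{div}\func{Riem} + \func{Riem}\ast T)\ast V$ terms, arising from the commutator (\ref{DLapcomm}), to be absorbed into the pointwise second- and third-order estimates, which is precisely why one must work on nested subcylinders $P_{r/2}, P_{r/4}, \dots$ rather than on the full $P_r(x_0,t_0)$. A secondary subtlety is calibrating the injectivity-radius threshold so that the Gaussian lower bound for the heat kernel is effective on the scale $\sqrt{\gamma}\, r$; this may shrink the admissible $r$ and is absorbed into the final choice of $\gamma$.
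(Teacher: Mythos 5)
Your proposal is correct and follows essentially the same argument as the paper: apply the local derivative estimates (Theorem \ref{thmLocDkVest}) to bound $|D^2V|$, $|D^3V|$, hence $|\partial_t(DV)| \leq C/r^3$ on $P_{r/4}$, propagate the pointwise lower bound $|DV(x_0,t_0)|=1/r$ backwards in time and outwards in space to a subcylinder where $|DV| \geq 1/(2r)$, and integrate against a heat-kernel lower bound. The only cosmetic differences are that you invoke a direct Gaussian lower bound for $p_{x_0,t_0}(\cdot,\theta)$ rather than citing Hamilton's Corollary 2.3 (these give the same $r^{-7}$ scaling), and you parametrize the spatial radius $\rho r$ independently of the temporal window $\gamma r^2$ whereas the paper takes both to be $\gamma r$; one small point worth making explicit is that the spatial propagation via $|D^2V| \leq C/r^2$ uses Kato's inequality $|\nabla|DV|| \leq |D^2V|$, which the paper states and your proposal uses silently.
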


\begin{proof}
From (\ref{dDVdt}), we have%
\begin{eqnarray}
\left\vert \frac{\partial }{\partial t}\left( DV\right) \right\vert &\leq
&\left\vert \Delta _{D}\left( DV\right) \right\vert +C_{1}\left\vert
DV\right\vert +C_{2}  \label{dtdv} \\
&&+2\left\vert D^{2}V\right\vert \left\vert DV\right\vert +\left\vert
DV\right\vert ^{3}.  \notag
\end{eqnarray}
\end{proof}

By hypothesis, $\left\vert DV\right\vert ^{2}$ is bounded on the parabolic
cylinder $P_{r}\left( x_{0},t_{0}\right) $ by $\frac{4}{r^{2}}$, hence by
Theorem \ref{thmLocDkVest}, there exist constants $C_{2}$ and $C_{3}$ such
that $\left\vert D^{2}V\right\vert ^{2}\leq \frac{C_{2}}{r^{4}}$ on $P_{%
\frac{r}{2}}\left( x_{0},t_{0}\right) $ and $\left\vert D^{3}V\right\vert
^{2}\leq \frac{C_{3}}{r^{6}}$ on $P_{\frac{r}{4}}\left( x_{0},t_{0}\right) $%
. Therefore, from (\ref{dtdv}) we find that on $P_{\frac{r}{4}}\left(
x_{0},t_{0}\right) $, 
\begin{equation}
\left\vert \frac{\partial }{\partial t}\left( DV\right) \right\vert \leq 
\frac{C}{r^{3}}  \label{ddtDV}
\end{equation}%
for some constant $C>0$. Note that the octonionic derivative $D$, being
metric-compatible, satisfies Kato's Inequality, so in particular we have 
\begin{equation*}
\left\vert D^{2}V\right\vert \geq \left\vert \nabla \left\vert DV\right\vert
\right\vert
\end{equation*}%
whenever $\left\vert DV\right\vert \neq 0$. Hence, in some neighborhood
around $x_{0}$, 
\begin{equation}
\left\vert \nabla \left\vert DV\right\vert \right\vert \leq \frac{C}{r^{2}}
\label{nabladv}
\end{equation}%
for some constant $C>0$. Overall, the time-derivative bound (\ref{ddtDV})
and space derivative bound (\ref{nabladv}) show that there exists some $%
\gamma \in \left( 0,1\right) $ such that for all $\left( x,t\right) \in
P_{\gamma r}\left( x_{0},t_{0}\right) $, 
\begin{equation}
\left\vert DV\left( x,t\right) \right\vert \geq \frac{1}{2r}.
\end{equation}%
Now, for $\theta =t_{0}-\gamma r^{2}$, we have 
\begin{eqnarray*}
\mathcal{F}\left( x_{0},t_{0},\theta \right) &=&\left( t_{0}-\theta \right)
\int_{M}\left\vert DV\left( x,\theta \right) \right\vert
^{2}p_{x_{0},t_{0}}\left( x,\theta \right) \func{vol}\left( x\right) \\
&\geq &\gamma r^{2}\int_{B_{\gamma r}\left( x_{0}\right) }\left\vert
DV\left( x,\theta \right) \right\vert ^{2}p_{x_{0},t_{0}}\left( x,\theta
\right) \func{vol}\left( x\right) \\
&\geq &\frac{1}{4}\gamma \int_{B_{\gamma r}\left( x_{0}\right)
}p_{x_{0},t_{0}}\left( x,\theta \right) \func{vol}\left( x\right) .
\end{eqnarray*}%
However, from Corollary 2.3 of \cite{HamiltonMatrixHarnack}, on $P_{\gamma
r}\left( x_{0},t_{0}\right) $, we have $p_{x_{0},t_{0}}\left( x,\theta
\right) \geq \frac{c}{r^{7}}$ for some constant $c$ that depends only on $M$%
. Therefore, for some $\delta >0$, we do obtain (\ref{Fdelta}).

Now we can proceed with the proof of Theorem \ref{thmEpsRegular}.

\begin{proof}[Proof of Theorem \protect\ref{thmEpsRegular}]
Suppose first that $V$ is a solution of the flow (\ref{heatflow}) on $%
M\times \lbrack 0,t_{0}]$. From the proof of Theorem 3.2 in \cite%
{GraysonHamilton} and from Theorem 3.1 in \cite{HamiltonMatrixHarnack}, we
know that for any $\eta >0$, any constant $C>1,$ and any $x_{0}\in M$ and
any $\tilde{t}_{0}=t_{0}-\alpha \in (0,1]$, there exists a $\rho >0$ such
that for all $\left( \xi ,\tau \right) \in P_{\rho }\left(
x_{0},t_{0}\right) $ 
\begin{equation}
\left( \tau -\alpha \right) p_{\xi ,\tau }\left( x,t\right) \leq C\left(
t_{0}-\alpha \right) p_{x_{0},t_{0}}\left( x,t\right) +\frac{\eta }{2%
\mathcal{E}_{0}}.  \label{heatkernelcomp}
\end{equation}%
Multiplying (\ref{heatkernelcomp}) by $\left\vert DV\right\vert ^{2}$, and
integrating we find 
\begin{equation}
\mathcal{F}\left( \xi ,\tau ,\alpha \right) \leq C\mathcal{F}\left(
x_{0},t_{0},\alpha \right) +\frac{\eta \mathcal{E}\left( \alpha \right) }{2%
\mathcal{E}_{0}}\leq \eta   \label{Feta}
\end{equation}%
as long as $\varepsilon $ in the hypothesis is chosen such that $\frac{\eta 
}{2}\geq C\varepsilon .$ Then, similarly as in the proof of Theorem 3.2 in 
\cite{GraysonHamilton}, define 
\begin{equation*}
q\left( x,t\right) =\min \left\{ \rho -d\left( x_{0},x\right) ,\sqrt{%
t-\left( t_{0}-\rho ^{2}\right) }\right\} .
\end{equation*}%
In some sense this gives the shorter of the distances from $\left(
x,t\right) $ to the spatial boundary of $P_{\rho }\left( x_{0},t_{0}\right) $
and the lower temporal boundary. Now, the function $q\left( x,t\right)
\left\vert DV\left( x,t\right) \right\vert $ will attain its maximum in $%
P_{\rho }\left( x_{0},t_{0}\right) $ at some point $\left( \xi ,\tau \right) 
$ in the interior of $P_{\rho }\left( x_{0},t_{0}\right) $, so that $\sigma
=q\left( \xi ,\tau \right) >0$. Since $\sigma \leq \rho -d\left( x_{0},\xi
\right) \ $and $\sigma ^{2}\leq \tau -\left( t_{0}-r^{2}\right) $, it is
easy to see that $P_{\sigma }\left( \xi ,\tau \right) \subset P_{\rho
}\left( x_{0},t_{0}\right) .$ Moreover, we can also see that $q\left(
x,t\right) \geq \frac{\sigma }{2}\ $on $P_{\frac{\sigma }{2}}\left( \xi
,\tau \right) $. Now, define $r$ such that $\frac{1}{r}=\left\vert DV\left(
\xi ,\tau \right) \right\vert $, then for all $\left( x,t\right) \in P_{\rho
}\left( x_{0},t_{0}\right) ,$ we have 
\begin{equation}
\left\vert DV\left( x,t\right) \right\vert \leq \frac{\sigma }{rq\left(
x,t\right) }.  \label{DVq}
\end{equation}%
Suppose $r\geq \frac{\sigma }{2}.$ Then, since $q\left( x,t\right) \geq 
\frac{\rho }{2}$ on $P_{\frac{\rho }{2}}\left( x_{0},t_{0}\right) $, we
obtain a bound $\left\vert DV\left( x,t\right) \right\vert \leq \frac{4}{%
\rho }$ for all $\left( x,t\right) \in P_{\frac{\rho }{2}}\left(
x_{0},t_{0}\right) .$ Otherwise, suppose $r\leq \frac{\sigma }{2}$. In that
case, from (\ref{DVq}) we find that for all $\left( x,t\right) \in
P_{r}\left( \xi ,\tau \right) $, 
\begin{equation}
\left\vert DV\left( x,t\right) \right\vert \leq \frac{2}{r}.
\end{equation}%
We can now apply Lemma \ref{lemFdelta} to obtain a $\delta >0$ and a $\gamma
\in \left( 0,1\right) $ such that for $\theta =\tau -\gamma r^{2}$ we have%
\begin{equation}
\mathcal{F}\left( \xi ,\tau ,\theta \right) \geq \delta .
\label{Fxitaudelta}
\end{equation}%
Now, by Theorem \ref{thmMonotonicity}, we find that if $\alpha \leq \theta
<\tau ,$ 
\begin{eqnarray}
\mathcal{F}\left( \xi ,\tau ,\theta \right)  &\leq &C\mathcal{F}\left( \xi
,\tau ,\alpha \right) +C\left( \theta -\alpha \right) \left( \mathcal{E}_{0}+%
\mathcal{E}_{0}^{\frac{1}{2}}\right)   \notag \\
&\leq &C\eta +C\left( \theta -\alpha \right) \left( \mathcal{E}_{0}+\mathcal{%
E}_{0}^{\frac{1}{2}}\right) 
\end{eqnarray}%
where we have used (\ref{Feta}). Since $\theta -\alpha \leq t_{0}-\alpha $,
let us find a $\beta $ such that $\beta \geq t_{0}-\alpha ,$ and $C\beta
\left( \mathcal{E}_{0}+\mathcal{E}_{0}^{\frac{1}{2}}\right) <\frac{\delta }{2%
}.$ Choosing $\eta <\frac{\delta }{2C}$ gives us $\mathcal{F}\left( \xi
,\tau ,\theta \right) <\delta $, which contradicts (\ref{Fxitaudelta}).

Thus we find that there exist $\varepsilon >0$ and $\beta $ (where $\beta $
depends on $\mathcal{E}_{0}$), such that for any $\left( x_{0},\alpha
\right) \in M\times \lbrack t_{0}-\beta ,t_{0})$ there exists a $\rho >0$
and a finite $B,$ such that if $\mathcal{F}\left( x_{0},t_{0},\alpha \right)
\leq \varepsilon $, then $\left\vert DV\right\vert \leq B$ is bounded on $%
P_{\rho }\left( x_{0},t_{0}\right) .$ It should be noted that $\rho $ and $B$
only depend on $t_{0}-\alpha $, rather than $t_{0}$ and $\alpha $
individually. Now suppose the solution $V$ only exists on $M\times \lbrack
0,t_{0}).$ Then, by applying the gradient bounds to the translates $\tilde{V}%
\left( x,t\right) =V\left( x,t-\zeta \right) $ and taking $\zeta
\longrightarrow 0$, we obtain uniform bounds on $\left\vert DV\right\vert $
for $t<T$. From Theorem \ref{thmLocDkVest} we then get estimates on higher
derivatives, and thus conclude that the solution extends smoothly to $%
t=t_{0} $ in some neighborhood of $x_{0}$.
\end{proof}

\section{Heat flow in the presence of a torsion-free $G_{2}$-structure}

\setcounter{equation}{0}\label{secrealim} The flow (\ref{heatflow}) and the
octonion covariant derivatives are defined with respect to some fixed
background $G_{2}$-structure that corresponds to the unit octonion $V=1$ (or 
$V=-1$). In fact, due to the covariance of $D$ with respect to change of the
background $G_{2}$-structure (\ref{DtildeAV2}), this choice is arbitrary.
However, if we would like to understand if the flow reaches some particular $%
G_{2}$-structure $\varphi $ within the given metric class, then we can
without loss of generality set the background $G_{2}$-structure to be $%
\varphi ,$ and then all that remains to be checked is whether the flow
reaches $V^{2}=1$ within the maximum time interval $[0,t_{\max }).$
Equivalently, this corresponds to $v=0$ where $v=\func{Im}V$. In this
section we will analyze the behavior of the real and imaginary parts of $V$
along the flow, particularly in the case when a torsion-free $G_{2}$%
-structure exists in the given metric class.

Let $V=f+v$ be the decomposition of the unit octonion $V$ into real and
imaginary parts. Then, we also have $f^{2}+\left\vert v\right\vert ^{2}=1.$
Also, suppose that the initial octonion is given by $V_{0}=f_{0}+v_{0}$. The
background $G_{2}$-structure $\varphi =\sigma _{1}\left( \varphi \right) $
will have torsion $T$ (which we will set to $0$ shortly), the initial $G_{2}$%
-structure $\varphi _{0}=\sigma _{V_{0}}\left( \varphi \right) $ will have
torsion $T_{0}=-\left( DV_{0}\right) V_{0}^{-1}$, and the $G_{2}$-structure $%
\varphi _{V}=\sigma _{V}\left( \varphi \right) $ that corresponds to $V,$
will have torsion $T^{V}=-\left( DV\right) V^{-1}$. Here $D$ is with respect
to $\varphi $.

\begin{lemma}
The evolution of $f$ and $\left\vert v\right\vert ^{2}$ along the flow (\ref%
{heatflow}) is given by 
\begin{subequations}
\begin{eqnarray}
\frac{\partial f}{\partial t} &=&\Delta f+f\left\vert \nabla V\right\vert
^{2}+\left\langle v,\func{div}T\right\rangle  \label{dfdt1} \\
&&+2\left\langle \nabla _{a}V,\left( 1-fV\right) T^{a}\right\rangle  \notag
\\
\frac{\partial \left\vert v\right\vert ^{2}}{\partial t} &=&\Delta
\left\vert v\right\vert ^{2}-2f^{2}\left\vert \nabla V\right\vert
^{2}+2\left\vert \nabla f\right\vert ^{2}-2\left\langle v,f\func{div}%
T\right\rangle  \label{dvsqdt} \\
&&+4f\left\langle \nabla _{a}V,\left( fV-1\right) T^{a}\right\rangle . 
\notag
\end{eqnarray}%
\end{subequations}%
\end{lemma}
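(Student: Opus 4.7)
The plan is to decompose the flow equation (\ref{heatflow}) into real and imaginary parts, and to convert the $D$-Laplacian and $|DV|^2$ into expressions involving only the Levi-Civita $\nabla$ and the background torsion $T$. Once $\partial f/\partial t$ is established, the identity $|v|^2=1-f^2$ yields $\partial|v|^2/\partial t$ with minimal additional work.

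First, I would apply the formula $\Delta_D V = \Delta V - 2(\nabla_a V)T^a - V\,\mathrm{Div}\,T$ from (\ref{lapD2}) together with $\mathrm{Div}\,T = \mathrm{div}\,T + |T|^2$ from (\ref{dsT}). Writing $V=f+v$ and using that $T^a, \mathrm{div}\,T\in\Gamma(\mathrm{Im}\,\mathbb{O}M)$, one computes the real part term by term, recalling that for imaginary octonions $\alpha,\beta$ one has $\mathrm{Re}(\alpha\beta)=-\langle\alpha,\beta\rangle$ while $\mathrm{Re}((\text{real})\cdot\beta)=0$. This gives
\begin{equation*}
\mathrm{Re}(\Delta_D V)=\Delta f+2\langle\nabla_a v,T^a\rangle - f|T|^2+\langle v,\mathrm{div}\,T\rangle,
\end{equation*}
so that $\partial f/\partial t = \mathrm{Re}(\Delta_D V)+f|DV|^2$ has all but the $|\nabla V|^2$ structure in place.

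Next, I would convert $|DV|^2$ to $|\nabla V|^2$. From $D_a V = \nabla_a V - V T_a$ and the fact that left multiplication by a unit octonion is an isometry of $\mathbb{O}M$, we get $\langle V T_a, V T_b\rangle = \langle T_a, T_b\rangle$, and hence
\begin{equation*}
|DV|^2 = |\nabla V|^2 - 2\langle \nabla_a V, V T^a\rangle + |T|^2.
\end{equation*}
Substituting $f|DV|^2$ into the expression for $\partial f/\partial t$, the $\pm f|T|^2$ terms cancel, and observing that $\langle\nabla_a v, T^a\rangle = \langle\nabla_a V, T^a\rangle$ (since $\nabla_a f$ is real and $T^a$ is imaginary) lets me collect the $T^a$ terms as $2\langle \nabla_a V, (1-fV)T^a\rangle$. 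This produces (\ref{dfdt1}).

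For (\ref{dvsqdt}), rather than computing $\partial v/\partial t$ directly (which would require tracking imaginary parts of octonion products and Jacobian identities for $\times_\varphi$), I would use $|v|^2 = 1-f^2$, so $\partial|v|^2/\partial t = -2f\,\partial f/\partial t$, and the identity $-2f\Delta f = \Delta|v|^2 + 2|\nabla f|^2$. Multiplying (\ref{dfdt1}) through by $-2f$ and rearranging then gives (\ref{dvsqdt}) directly. The main bookkeeping hazard throughout is the real/imaginary splitting of octonion products —  in particular remembering that $\mathrm{Re}(\alpha\beta)=-\langle\alpha,\beta\rangle$ for $\alpha,\beta\in\mathrm{Im}\,\mathbb{O}M$ and that $|V|=1$ implies $|VT_a|=|T_a|$ — but no genuinely deep step arises, since once the algebra is handled the $f^2+|v|^2=1$ shortcut eliminates the need to manipulate the imaginary part of $\Delta_D V$ independently.
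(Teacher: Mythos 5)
Your proof is correct and follows essentially the same route as the paper: take the real part of the flow equation, express $\Delta_D V$ and $|DV|^2$ in terms of $\nabla$ and the background torsion $T$, then derive the second identity via $|v|^2=1-f^2$. The only cosmetic difference is that the paper computes $\langle \Delta_D V,1\rangle$ by applying the metric-compatible product rule to $\Delta\langle V,1\rangle$ (using $D_a 1=-T_a$ and $\Delta_D 1=-\func{Div}T$), whereas you take the real part of the explicit formula (\ref{lapD2}) directly — two equivalent bookkeepings of the same calculation.
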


\begin{proof}
Taking the inner product of (\ref{heatflow}) with $1$, we get 
\begin{equation}
\frac{\partial f}{\partial t}=\left\langle \Delta _{D}V,1\right\rangle
+\left\vert DV\right\vert ^{2}f.  \label{dfdt}
\end{equation}%
However, 
\begin{eqnarray}
\Delta f &=&\Delta \left\langle V,1\right\rangle  \notag \\
&=&\left\langle \Delta _{D}V,1\right\rangle +2\left\langle
D_{a}V,D^{a}1\right\rangle +\left\langle V,\Delta _{D}1\right\rangle  \notag
\\
&=&\left\langle \Delta _{D}V,1\right\rangle -2\left\langle \nabla
_{a}V-VT_{a},T^{a}\right\rangle -\left\langle V,\func{div}T+\left\vert
T\right\vert ^{2}\right\rangle  \notag \\
&=&\left\langle \Delta _{D}V,1\right\rangle -2\left\langle \nabla
_{a}V,T^{a}\right\rangle -\left\langle v,\func{div}T\right\rangle
+f\left\vert T\right\vert ^{2}
\end{eqnarray}%
and 
\begin{eqnarray}
\left\vert DV\right\vert ^{2} &=&\left\vert \nabla V-VT\right\vert ^{2} 
\notag \\
&=&\left\vert \nabla V\right\vert ^{2}-2\left\langle \nabla
_{a}V,VT^{a}\right\rangle +\left\vert T\right\vert ^{2}.
\end{eqnarray}%
Thus, overall, (\ref{dfdt}) becomes 
\begin{equation*}
\frac{\partial f}{\partial t}=\Delta f+f\left\vert \nabla V\right\vert
^{2}+\left\langle v,\func{div}T\right\rangle +2\left\langle \nabla
_{a}V,\left( 1-fV\right) T^{a}\right\rangle .
\end{equation*}%
Multiplying by $2f,$ we further obtain 
\begin{eqnarray}
\frac{\partial f^{2}}{\partial t} &=&\Delta f^{2}+2f^{2}\left\vert \nabla
V\right\vert ^{2}-2\left\vert \nabla f\right\vert ^{2}+2\left\langle v,f%
\func{div}T\right\rangle  \label{dfsqdt} \\
&&+4f\left\langle \nabla _{a}V,\left( 1-fV\right) T^{a}\right\rangle . 
\notag
\end{eqnarray}%
Since $\left\vert v\right\vert ^{2}=1-f^{2}$, $\frac{\partial \left\vert
v\right\vert ^{2}}{\partial t}=-\frac{\partial f^{2}}{\partial t}$, and
hence we then get (\ref{dvsqdt}).
\end{proof}

\begin{lemma}
Let $u=f^{2}-\left\vert v\right\vert ^{2}=2f^{2}-1$, and suppose $T=0$.
Then, along the flow (\ref{heatflow}), $u$ satisfies the inequality 
\begin{equation}
\frac{\partial u}{\partial t}\geq \Delta u+\left( \frac{u}{1-u^{2}}\right)
\left\vert \nabla u\right\vert ^{2}  \label{dudtineqlem}
\end{equation}
\end{lemma}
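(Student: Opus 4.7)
The plan is to specialize the evolution equation \eqref{dfsqdt} to the torsion-free case, rewrite everything in terms of $u$, and then reduce the required inequality to the Cauchy--Schwarz inequality using the unit constraint $|V|^2=1$.

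First, set $T=0$. Then the last two terms in \eqref{dfsqdt} both vanish, leaving
\begin{equation*}
\frac{\partial f^{2}}{\partial t}=\Delta f^{2}+2f^{2}\left\vert \nabla V\right\vert ^{2}-2\left\vert \nabla f\right\vert ^{2}.
\end{equation*}
Since $u=2f^{2}-1$, differentiating and using $|\nabla V|^{2}=|\nabla f|^{2}+|\nabla v|^{2}$ gives
\begin{equation*}
\frac{\partial u}{\partial t}=\Delta u+4f^{2}\left\vert \nabla v\right\vert ^{2}+\left( 4f^{2}-4\right) \left\vert \nabla f\right\vert ^{2}=\Delta u+4f^{2}\left\vert \nabla v\right\vert ^{2}-4\left\vert v\right\vert ^{2}\left\vert \nabla f\right\vert ^{2},
\end{equation*}
where I used $f^{2}-1=-|v|^{2}$.

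Next, I would rewrite the claimed lower-order term using $\nabla u=4f\,\nabla f$, so that $|\nabla u|^{2}=16f^{2}|\nabla f|^{2}$, and $1-u^{2}=4f^{2}|v|^{2}$. Thus the asserted right-hand side becomes
\begin{equation*}
\Delta u+\frac{u}{1-u^{2}}|\nabla u|^{2}=\Delta u+\frac{4(2f^{2}-1)|\nabla f|^{2}}{|v|^{2}}.
\end{equation*}
Subtracting, the inequality \eqref{dudtineqlem} is equivalent to
\begin{equation*}
f^{2}|v|^{2}|\nabla v|^{2}-|v|^{4}|\nabla f|^{2}\geq (2f^{2}-1)|\nabla f|^{2},
\end{equation*}
which (using once more $|v|^{2}+f^{2}=1$) simplifies, at points where $f\ne 0$, to
\begin{equation*}
|v|^{2}|\nabla v|^{2}\geq f^{2}|\nabla f|^{2}.
\end{equation*}

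The final step, which is the heart of the argument, is to invoke the constraint $|V|^{2}=f^{2}+|v|^{2}=1$. Differentiating gives $f\,\nabla f+\langle v,\nabla v\rangle =0$, so
\begin{equation*}
f^{2}|\nabla f|^{2}=\langle v,\nabla v\rangle ^{2}\leq |v|^{2}|\nabla v|^{2}
\end{equation*}
by the Cauchy--Schwarz inequality applied pointwise. This gives \eqref{dudtineqlem} wherever $f\ne 0$; the remaining points (where $u=-1$ and both sides of the original inequality involve the $1-u^{2}$ denominator) require only a limiting or distributional argument, since the evolution equation $\partial_t u=\Delta u+4f^{2}|\nabla v|^{2}-4|v|^{2}|\nabla f|^{2}$ holds everywhere smoothly. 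The main obstacle is simply the bookkeeping: keeping track of the constraint $f\nabla f=-\langle v,\nabla v\rangle$ consistently and verifying that the algebraic identity $1-u^{2}=4f^{2}|v|^{2}$ makes the gradient term match exactly the output of Cauchy--Schwarz.
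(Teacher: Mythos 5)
Your argument is correct and reaches the same endpoint as the paper via essentially the same core identity, but organized a bit differently. The paper first substitutes $\nabla u = 4f\nabla f$ into the evolution equation to express $|\nabla f|^2$ via $|\nabla u|^2$, and then invokes Kato's inequality $|\nabla v|^2 \geq |\nabla |v||^2$ to control $|\nabla v|^2$. You instead subtract the target lower-order term, simplify algebraically using $1 - u^2 = 4f^2|v|^2$, and reduce to the clean pointwise statement $|v|^2|\nabla v|^2 \geq f^2|\nabla f|^2$, which follows from $f\nabla f + \langle v, \nabla v\rangle = 0$ and Cauchy--Schwarz. These are the same inequality in two guises, since Kato for $|v|$ (in this abelian setting) is precisely Cauchy--Schwarz applied to $\langle v, \nabla_a v\rangle$; your reduction just makes the role of the unit constraint a bit more transparent.

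One small imprecision: you multiplied the desired inequality by $|v|^2$ to obtain the equivalent form, so the argument is valid only where $|v| \neq 0$ as well as $f \neq 0$, that is, on $-1 < u < 1$. You mention only $u = -1$ as requiring a separate limiting argument, but $u = 1$ (where $v = 0$) also needs separate treatment; the paper handles both (showing that at $v = 0$ one has $\partial_t u = \Delta u + 4|\nabla v|^2 - 4|\nabla |v||^2 = 0$, so the inequality is satisfied in the appropriate limiting sense). This is a minor omission and does not affect the substance of your proof.
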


\begin{proof}
From (\ref{dfsqdt}), setting $T=0$, we have 
\begin{equation}
\frac{\partial u}{\partial t}=\Delta u+2\left( u+1\right) \left\vert \nabla
v\right\vert ^{2}+2\left( u-1\right) \left\vert \nabla f\right\vert ^{2}.
\label{dudt1}
\end{equation}%
Assume first that $u^{2}\neq 1,$ so that $f\neq 0$ and $v\neq 0$. Since $%
\nabla u=4f\nabla f,$ we find 
\begin{equation}
\left\vert \nabla f\right\vert ^{2}=\frac{1}{16f^{2}}\left\vert \nabla
u\right\vert ^{2}=\frac{1}{8\left( u+1\right) }\left\vert \nabla
u\right\vert ^{2}.
\end{equation}%
With this, (\ref{dudt1}) becomes 
\begin{equation}
\frac{\partial u}{\partial t}=\Delta u+2\left( u+1\right) \left\vert \nabla
v\right\vert ^{2}+\frac{1}{4}\frac{u-1}{u+1}\left\vert \nabla u\right\vert
^{2}.  \label{dudt2}
\end{equation}%
From Kato's inequality, 
\begin{equation*}
\left\vert \nabla v\right\vert ^{2}\geq \left\vert \left( \nabla \left\vert
v\right\vert \right) \right\vert ^{2}=\frac{1}{4\left\vert v\right\vert ^{2}}%
\left\vert \left( \nabla \left( f^{2}\right) \right) \right\vert ^{2}=\frac{1%
}{8\left( 1-u\right) }\left\vert \nabla u\right\vert ^{2},
\end{equation*}%
using which, (\ref{dudt2}) becomes 
\begin{equation}
\frac{\partial u}{\partial t}\geq \Delta u+\left( \frac{u}{1-u^{2}}\right)
\left\vert \nabla u\right\vert ^{2}.  \label{dudt4}
\end{equation}%
It should be noted that generally, Kato's inequality holds whenever $v\neq 0$%
, however in our case, when $v=0,$ $\frac{\partial u}{\partial t}=0$ since $%
u=1-2\left\vert v\right\vert ^{2}.$ However, (\ref{dudt1}) becomes%
\begin{equation}
\frac{\partial u}{\partial t}=\Delta u+4\left\vert \nabla v\right\vert
^{2}=-4\left\vert \left( \nabla \left\vert v\right\vert \right) \right\vert
^{2}+4\left\vert \nabla v\right\vert ^{2}=0.
\end{equation}%
Hence $\left\vert \nabla v\right\vert ^{2}=\left\vert \left( \nabla
\left\vert v\right\vert \right) \right\vert ^{2}$ and thus the inequality
still holds. Now suppose $u=-1,$ so that $f=0$ and hence $\left\vert
v\right\vert =1$. Then, (\ref{dudt1}) becomes 
\begin{equation}
\frac{\partial u}{\partial t}=\Delta u-4\left\vert \nabla f\right\vert
^{2}=0.  \notag
\end{equation}%
On the other hand, in (\ref{dudt2}), as $u\longrightarrow -1$, $\frac{u}{%
1-u^{2}}\left\vert \nabla u\right\vert ^{2}\longrightarrow -4\left\vert
\nabla f\right\vert ^{2},$ so 
\begin{equation*}
0=\frac{\partial u}{\partial t}\geq -\frac{\varepsilon _{2}}{2}\left\vert
\nabla u\right\vert ^{2}
\end{equation*}%
which is of course true. We conclude that (\ref{dudt4}) holds everywhere.
\end{proof}

To be able to apply the Maximum Principle to (\ref{dudtineqlem}) we need to
rewrite it in a different form. This will then allow us to obtain lower
bounds on $u,$ and hence $f$.

\begin{lemma}
\label{lemT0inff}Suppose $T=0$, then along the flow (\ref{heatflow}), $%
f\left( t\right) ^{2}$ is bounded by 
\begin{equation}
\inf_{M}\left[ f\left( t,x\right) \right] ^{2}\geq \inf_{M}\left[ f\left(
0,x\right) \right] ^{2}
\end{equation}%
as long as the flow exists.
\end{lemma}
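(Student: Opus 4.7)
The plan is to apply a parabolic minimum principle directly to the differential inequality
\begin{equation*}
\frac{\partial u}{\partial t}\geq \Delta u+\left(\frac{u}{1-u^{2}}\right)\left\vert \nabla u\right\vert ^{2}
\end{equation*}
established in the previous lemma, where $u = 2f^{2}-1 \in [-1,1]$. Since the statement $\inf_{M} f(t,x)^{2}\geq \inf_{M} f(0,x)^{2}$ is equivalent to $\inf_{M} u(t,x)\geq \inf_{M} u(0,x)$, it suffices to show that $u_{\min}(t) := \inf_{x\in M} u(t,x)$ is a non-decreasing function of $t$.

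The heuristic is clean: $M$ is compact, so at each time $t$ the infimum is attained at some $x_{0}(t)\in M$, and there $\nabla u = 0$ and $\Delta u \geq 0$. Because the singular coefficient $u/(1-u^{2})$ is multiplied by $|\nabla u|^{2}$, the whole extra term vanishes at the minimum, leaving $\partial_{t} u \geq 0$ at $(t,x_{0})$. The only delicate aspect is the singularity of $u/(1-u^{2})$ at $u = \pm 1$. However, this singularity cannot actually cause trouble: the preceding lemma already verified the inequality by a direct limit argument at those values, and wherever the infimum is attained the vanishing of $\nabla u$ kills the offending factor.

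To make this rigorous in a way that avoids evaluating the singular coefficient pointwise, I would argue by contradiction using a standard perturbation trick. Fix $\varepsilon > 0$ and set $u_{\varepsilon}(t,x) = u(t,x) + \varepsilon t$. Suppose for contradiction that $u_{\min}(t) < u_{\min}(0)$ for some $t > 0$. Then for all sufficiently small $\varepsilon$, the quantity $\inf_{x} u_{\varepsilon}(t,x) - u_{\min}(0)$ is also negative at that $t$, so by continuity there is a first time $t_{1} > 0$ at which $\inf_{x} u_{\varepsilon}(t_{1},x) = u_{\min}(0)$, attained at some point $x_{1} \in M$. At $(t_{1},x_{1})$ we have $\nabla u = \nabla u_{\varepsilon} = 0$, $\Delta u \geq 0$, and $\partial_{t} u_{\varepsilon} \leq 0$, i.e.\ $\partial_{t} u \leq -\varepsilon$. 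Substituting into the differential inequality gives $-\varepsilon \geq 0$, a contradiction. Letting $\varepsilon \downarrow 0$ yields $u_{\min}(t) \geq u_{\min}(0)$ for all $t$ in the existence interval.

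The main (and essentially only) obstacle is precisely the singular coefficient at $u = \pm 1$, and the perturbation argument above is the standard way to dodge it; no further estimates are required. The short-time existence and smoothness of $V(t)$, together with $V$ being a unit section (so $u$ is genuinely confined to $[-1,1]$), ensure that all quantities appearing in the argument are well-defined throughout $[0,t_{\max})$.
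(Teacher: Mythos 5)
Your argument is correct and it takes a genuinely different route from the paper. The paper desingularizes the inequality
\begin{equation*}
\frac{\partial u}{\partial t}\geq \Delta u+\Bigl(\frac{u}{1-u^{2}}\Bigr)\left\vert \nabla u\right\vert ^{2}
\end{equation*}
globally by the substitution $u=\sin\theta$, $\theta\in[-\tfrac{\pi}{2},\tfrac{\pi}{2}]$: this converts the singular first-order term into a clean linear heat inequality $\partial_{t}\theta\geq\Delta\theta$ on the region $\cos\theta>0$, to which the standard minimum principle for $\theta$ applies directly, with the boundary cases $u=\pm 1$ dispatched separately. You instead keep the singular form and exploit the structural fact that the troublesome coefficient is multiplied by $\left\vert\nabla u\right\vert^{2}$, which vanishes at any spatial minimizer; the $\varepsilon t$-perturbation then pushes the argument through by contradiction. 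Your route is a bit more elementary (no change of variables and no need to treat $u=\pm 1$ as separate limiting cases), while the paper's substitution has the aesthetic advantage of producing an inequality to which the maximum principle applies verbatim, with no pointwise discussion of where $\nabla u$ vanishes.

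One small technical wrinkle in your write-up: you declare $t_{1}>0$ to be ``the first time at which $\inf_{x} u_{\varepsilon}(t_{1},x)=u_{\min}(0)$,'' but since $u_{\varepsilon}(0,\cdot)=u(0,\cdot)$, the infimum already equals $u_{\min}(0)$ at $t=0$, so this phrase does not by itself pick out a positive time. The cleanest fix is to take $(t_{1},x_{1})$ to be a global minimizer of $u_{\varepsilon}$ on the compact slab $M\times[0,t_{*}]$ (where $t_{*}$ is the time at which $\inf u<u_{\min}(0)$); since $u_{\varepsilon}(t_{*},x_{*})<u_{\min}(0)$ while $u_{\varepsilon}(0,\cdot)\geq u_{\min}(0)$, necessarily $t_{1}>0$, and at $(t_{1},x_{1})$ one has $\nabla u=0$, $\Delta u\geq 0$, and $\partial_{t}u_{\varepsilon}\leq 0$, which is exactly what your contradiction needs. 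Alternatively, define $t_{1}$ as the supremum of times $\tau$ for which $\inf_{x}u_{\varepsilon}(t,x)\geq u_{\min}(0)$ on $[0,\tau]$; then at $(t_{1},x_{1})$ the backward difference quotient in $t$ is nonpositive and the same conclusion follows. With either formulation the proof is complete.
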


\begin{proof}
In (\ref{dudtineqlem}), let $u=\sin \theta $ for some function $\theta $
such that $\theta \in \left[ -\frac{\pi }{2},\frac{\pi }{2}\right] $, Then,%
\begin{eqnarray*}
\nabla u &=&\left( \cos \theta \right) \nabla \theta \\
\Delta u &=&-\left( \sin \theta \right) \left\vert \nabla \theta \right\vert
^{2}+\left( \cos \theta \right) \Delta \theta
\end{eqnarray*}%
and hence we can rewrite (\ref{dudtineqlem}) as 
\begin{equation*}
\left( \cos \theta \right) \frac{\partial \theta }{\partial t}\geq \left(
\cos \theta \right) \Delta \theta
\end{equation*}%
Overall, for $-1<u<1,$ and hence $\cos \theta >0$, the inequality (\ref%
{dudtineqlem}) becomes 
\begin{equation}
\frac{\partial \theta }{\partial t}\geq \Delta \theta .  \label{dthetadt}
\end{equation}%
Hence, by the Maximum Principle, we conclude that if $\inf_{M}\left[ f\left(
0\right) \right] ^{2}>0$, then, as long as the flow exists, 
\begin{equation}
\inf_{M}\theta \left( t,x\right) \geq \inf_{M}\theta \left( 0,x\right)
\label{inftheta}
\end{equation}%
and thus, in any case, $\inf_{M}\left[ f\left( t,x\right) \right] ^{2}\geq
\inf_{M}f\left[ \left( 0,x\right) \right] ^{2}$.
\end{proof}

Thus, we have shown that in the presence of a torsion-free $G_{2}$%
-structure, $f^{2}$ is bounded below by its initial value and hence, $%
\left\vert v\right\vert ^{2}$ is bounded above by its initial value. This
shows that pointwise, $V\left( t\right) $ never gets further away from the
torsion-free $G_{2}$-structure than at the initial point. We can do even
better though. It turns out that if initially, $f$ is nowhere zero, then the
integral of $\left\vert f\left( t\right) \right\vert $ increases
monotonically along the flow as long as $V$ is not parallel.

For convenience, we define a new functional 
\begin{equation}
\mathcal{G}\left( t\right) :=\int_{M}\left\vert f\left( t\right) \right\vert 
\func{vol}  \label{Vtdef}
\end{equation}%
which is just the $L^{1}$-norm of $f$ at time $t$. Recall that $\mathcal{E}%
\left( t\right) $ in this case is just the $L^{2}$-norm of $\nabla V$.

\begin{lemma}
\label{thmvL2}Suppose $T=0$, and $k:=\inf_{M}\left\vert f\left( 0,x\right)
\right\vert >0$, then along the flow (\ref{heatflow}), 
\begin{equation}
\frac{\partial \mathcal{G}\left( t\right) }{\partial t}\geq k\mathcal{E}%
\left( t\right) .  \label{dvdtineq3a}
\end{equation}
\end{lemma}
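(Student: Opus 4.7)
The plan is to exploit the simple form of the evolution equation for $f$ when $T=0$ and the pointwise lower bound on $|f|$ already established in Lemma \ref{lemT0inff}.

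First, I would set $T=0$ in the evolution equation (\ref{dfdt1}) to obtain
\begin{equation*}
\frac{\partial f}{\partial t}=\Delta f+f\left\vert \nabla V\right\vert ^{2}.
\end{equation*}
Next, invoke Lemma \ref{lemT0inff}: since $k=\inf_{M}|f(0,x)|>0$, we have $f(t,x)^{2}\geq k^{2}>0$ at every point and every time at which the flow exists. In particular $f(t,\cdot)$ never vanishes, so by continuity it has constant sign on each connected component of $M$. Hence on each component $|f|=\varepsilon f$ for some constant $\varepsilon\in\{\pm1\}$, and $|f|$ is smooth and satisfies
\begin{equation*}
\frac{\partial |f|}{\partial t}=\Delta |f|+|f|\left\vert \nabla V\right\vert ^{2}.
\end{equation*}

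Then integrate over the closed manifold $M$. The divergence theorem kills the Laplacian term, leaving
\begin{equation*}
\frac{\partial \mathcal{G}(t)}{\partial t}=\int_{M}|f|\left\vert \nabla V\right\vert ^{2}\func{vol}.
\end{equation*}
Since $|f|\geq k$ pointwise and $|\nabla V|^{2}=|DV|^{2}$ in the $T=0$ case, the inequality (\ref{dvdtineq3a}) follows immediately:
\begin{equation*}
\frac{\partial \mathcal{G}(t)}{\partial t}\geq k\int_{M}|DV|^{2}\func{vol}=k\mathcal{E}(t).
\end{equation*}

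There is no real obstacle here beyond a careful sign bookkeeping to justify replacing $f$ by $|f|$; the key input is entirely the strict pointwise bound $|f|\geq k>0$ from Lemma \ref{lemT0inff}, which guarantees that $|f|$ inherits the same parabolic equation as $f$ on each component and allows a clean application of the divergence theorem.
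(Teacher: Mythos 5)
Your proof is correct and follows essentially the same route as the paper: set $T=0$ to get the clean evolution equation $\partial_t f=\Delta f+f|\nabla V|^2$, use Lemma~\ref{lemT0inff} to conclude $f$ never vanishes so $|f|$ satisfies the same equation, integrate over $M$ to eliminate the Laplacian, and bound $|f|\geq k$ pointwise. The only cosmetic difference is that you make the sign bookkeeping (constant sign of $f$ on each component) explicit, and you cite (\ref{dfdt1}) rather than (\ref{dvsqdt}) for the $f$-equation, which is in fact the more accurate reference.
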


\begin{proof}
Recall from (\ref{dvsqdt}) that along the flow (\ref{heatflow}), for $T=0$, 
\begin{equation}
\frac{\partial f}{\partial t}=\Delta f+f\left\vert \nabla V\right\vert ^{2}
\label{dfdtT0}
\end{equation}%
We know from Lemma \ref{lemT0inff} that in this case, $\inf_{M}f^{2}\left(
t,x\right) \geq \inf_{M}f^{2}\left( 0,x\right) >0,$ so we can rewrite (\ref%
{dfdtT0}) as 
\begin{equation}
\frac{\partial \left\vert f\right\vert }{\partial t}=\Delta \left\vert
f\right\vert +\left\vert f\right\vert \left\vert \nabla V\right\vert ^{2},
\end{equation}%
since $f$ is never zero along the flow. Integrating over $M$, we get 
\begin{equation*}
\frac{\partial \mathcal{G}\left( t\right) }{\partial t}=\int_{M}\left\vert
f\right\vert \left\vert \nabla V\right\vert ^{2}\func{vol}\geq
\inf_{M}\left\vert f\left( t\right) \right\vert \mathcal{E}\left( t\right) 
\end{equation*}%
and hence we get (\ref{dvdtineq3a}).
\end{proof}

\begin{remark}
Lemma \ref{thmvL2}\ shows that as long as initially $f\left( t\right) $ is
nowhere zero (and equivalently $\inf_{M}\left\vert f\left( 0,x\right)
\right\vert >0$), its $L^{1}$ norm is increasing monotonically as long as $%
\mathcal{E}\left( t\right) \neq 0$. Of course, $\left\vert f\right\vert \leq
1$, and so $\mathcal{G}\left( t\right) \leq Vol\left( M\right) .$ Recall
from Lemma \ref{lemfuncevol} that $\mathcal{E}\left( t\right) $ is
decreasing monotonically, with stationary points corresponding to
divergence-free $G_{2}$-structures. In particular, if the flow reaches a
stationary point with $\func{div}T^{\left( V\right) }=0$, but $\mathcal{E}%
\left( t\right) >0$, $\mathcal{G}\left( t\right) $ will still increase. On
the other hand, suppose $M$ has a parallel vector field. An octonion section
which has this vector as the imaginary part and has a constant real part
will then also define a torsion-free $G_{2}$-structure. So if the flow
reaches this section, at that point $\mathcal{E}\left( t\right) $ will
vanish, so it is possible that $\left\vert f\right\vert =1$ will never be
reached in that case, even though a torsion-free $G_{2}$-structure has been
reached. If on the other hand, the flow exists for all $t\geq 0$, then we
see that it will have to converge to a torsion-free $G_{2}$-structure, again
not necessarily the one defined by $\left\vert f\right\vert =1$.
\end{remark}

Combining Corollary \ref{corrGlobalExist} and Lemma \ref{thmvL2}, we obtain
the following theorem. If we assume that a torsion-free $G_{2}$-structure is
given by $U\in \Gamma \left( S\mathbb{O}M\right) $, rather than by the
section $1$, then the condition $\inf_{M}\left\vert f\left( 0,x\right)
\right\vert >0$ is replaced by the condition that initially $\left\vert
\left\langle V\left( 0,x\right) ,U\right\rangle \right\vert >0.$

\begin{theorem}
\label{thmTorsionFree}Suppose $\left( \varphi ,g\right) $ is a $G_{2}$%
-structure on a compact $7$-dimensional manifold $M$. Suppose there exists a
unit octonion section $U$ such that $\sigma _{U}\left( \varphi \right) $ is
torsion free. Then there exists $\varepsilon >0$, such that if the flow (\ref%
{heatflow}) has initial energy density ${\Lambda }_{0}<\varepsilon $ and the
initial octonion section $V_{0}$ satisfies $\left\vert \left\langle
V_{0},U\right\rangle \right\vert >0$ on $M$, then a solution exists for all $%
t\geq 0$, and as $t\longrightarrow \infty $, $V\left( t\right)
\longrightarrow V_{\infty }$ where $V_{\infty }$ defines a torsion-free $%
G_{2}$-structure. If $\left( M,g\right) $ admits no parallel vector fields,
then $V_{\infty }=U$.
\end{theorem}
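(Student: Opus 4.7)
The plan is to change the background $G_{2}$-structure to the torsion-free one and then combine Corollary \ref{corrGlobalExist} (which gives global existence and convergence to a limit with $\func{div}T^{\left( V_{\infty }\right) }=0$) with Lemma \ref{thmvL2} (which will force the limit to actually be torsion-free). First, exploiting the covariance of the flow under change of background established via (\ref{DtildeAV2}), set $\bar{\varphi}=\sigma _{U}\left( \varphi \right) $ and $W\left( t\right) =V\left( t\right) U^{-1}$; since $\vert T^{\left( V\right) }\vert ^{2}=\vert T^{\left( W\right) }\vert ^{2}$ both describe the torsion of the common $G_{2}$-structure $\sigma _{V}\left( \varphi \right) =\sigma _{W}\left( \bar{\varphi}\right) $, the hypothesis $\Lambda _{0}<\varepsilon $ is preserved, and Corollary \ref{corrGlobalExist} yields a smooth global solution $W\left( t\right) $ for $t\geq 0$ with smooth limit $W_{\infty }$ satisfying $\func{div}T^{\left( W_{\infty }\right) }=0$ in the $\bar{\varphi}$-background.

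Next, decompose $W=f+v$ with $f=\func{Re}W$. The hypothesis $\vert \langle V_{0},U\rangle \vert >0$ on $M$ translates to $\vert f\left( 0,\cdot \right) \vert >0$ pointwise, so by compactness of $M$, $k:=\inf_{M}\vert f\left( 0,\cdot \right) \vert >0$. Since $\bar{\varphi}$ is torsion-free, Lemma \ref{lemT0inff} preserves the bound $\inf_{M}f\left( t,\cdot \right) ^{2}\geq k^{2}$ along the flow, so $f$ never vanishes. Lemma \ref{thmvL2} then yields $\frac{d\mathcal{G}}{dt}\geq k\mathcal{E}\left( t\right) $ with $\mathcal{G}\left( t\right) =\int_{M}\vert f\left( t\right) \vert \func{vol}\leq \text{Vol}\left( M\right) $. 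Integrating in time gives $\int_{0}^{\infty }\mathcal{E}\left( t\right) dt\leq \text{Vol}\left( M\right) /k<\infty $, and since $\mathcal{E}$ is monotonically nonincreasing by Lemma \ref{lemfuncevol}, this forces $\mathcal{E}\left( t\right) \longrightarrow 0$. Combined with the smooth convergence $W\left( t\right) \longrightarrow W_{\infty }$, one concludes $\vert DW_{\infty }\vert ^{2}=0$, hence $T^{\left( W_{\infty }\right) }=0$, so $V_{\infty }=W_{\infty }U$ defines a torsion-free $G_{2}$-structure.

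For the last statement, suppose $\left( M,g\right) $ admits no parallel vector fields. Then $T^{\left( W_{\infty }\right) }=0$ together with the torsion-free background means $\nabla W_{\infty }=0$, so $W_{\infty }$ is parallel and $\func{Im}W_{\infty }$ is a parallel vector field, which must vanish. Thus $W_{\infty }=\pm 1$, and by continuity of $f$ along the flow the sign is determined by that of $\langle V_{0},U\rangle $ at any point. Either way, $V_{\infty }=\pm U$ and both choices define the same torsion-free $G_{2}$-structure $\sigma _{U}\left( \varphi \right) $, under the $\mathbb{Z}_{2}$ identification inherent in parametrizing $G_{2}$-structures by unit octonion sections.

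The main obstacle in this plan is upgrading $\int_{0}^{\infty }\mathcal{E}\left( t\right) dt<\infty $ to $\mathcal{E}\left( t\right) \longrightarrow 0$; this is handled cleanly by the monotonicity from Lemma \ref{lemfuncevol}. All other steps are direct applications of results already established in Sections \ref{secHeat}--\ref{secrealim}, with the change of background providing the crucial reduction that makes Lemmas \ref{lemT0inff} and \ref{thmvL2} (which require $T=0$) applicable.
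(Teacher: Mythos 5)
Your proof is correct and follows essentially the same route as the paper: change the background to $\sigma_U(\varphi)$, invoke Corollary \ref{corrGlobalExist} for global existence, use Lemma \ref{thmvL2} together with the bound $\mathcal{G}(t)\leq \mathrm{Vol}(M)$ to force $\mathcal{E}(t)\longrightarrow 0$, and identify the limit. Your explicit deduction that $\int_0^\infty \mathcal{E}(t)\,dt<\infty$ combined with the monotonicity of $\mathcal{E}$ from Lemma \ref{lemfuncevol} yields $\mathcal{E}(t)\longrightarrow 0$ tidily fills in a step the paper states without elaboration.
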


\begin{proof}
From Corollary \ref{corrGlobalExist} we already know that a solution $%
V\left( t\right) $ will exist for all $t\geq 0$. Recall that we may switch
over to the torsion-free $G_{2}$-structure $\sigma _{U}\left( \varphi
\right) $ as our background $G_{2}$-structure. In particular, from (\ref%
{sigmaprod}), $\sigma _{V\left( t\right) }\left( \varphi \right) =\sigma
_{V\left( t\right) U^{-1}}\left( \sigma _{U}\left( \varphi \right) \right) .$
Hence we can now consider $\tilde{V}\left( t\right) =V\left( t\right) U^{-1}$%
. Let us write $\tilde{V}\left( t\right) =f\left( t\right) +v\left( t\right) 
$. Now the condition $\left\vert \left\langle V_{0},U\right\rangle
\right\vert >0$ is equivalent to $\left\vert f\left( 0\right) \right\vert >0$%
. Thus, from Lemma \ref{thmvL2} we know that $\mathcal{G}\left( t\right) $
is growing monotonically along the flow, however $\mathcal{G}\left( t\right) 
$ is also bounded above by $Vol\left( M\right) $, hence it must converge to
some $\mathcal{G}_{\infty }\leq Vol\left( M\right) $, and in particular this
shows that $\mathcal{E}\left( t\right) \longrightarrow 0$. Hence, the limit $%
V_{\infty }=\lim_{t\longrightarrow \infty }V\left( t\right) $ must define a
torsion-free $G_{2}$-structure. 

If the background $G_{2}$-structure is torsion-free, then the torsion of the 
$G_{2}$-structure defined by a unit octonion $V$ will be given by $T^{\left(
V\right) }=-\left( \nabla V\right) V^{-1}$. Hence torsion-free $G_{2}$%
-structures in the same metric class are given by unit octonion sections $V$
for which $\nabla V=0.$ In particular, the imaginary part of $V$ is then
parallel vector field. So any torsion-free $G_{2}$-structures apart from the
background $G_{2}$-structure are defined by parallel vector fields. Hence,
if there are no parallel vector fields on $M$, then the torsion-free $G_{2}$%
-structure that is compatible with $g$ is unique, and thus $V_{\infty }=U$.
\end{proof}

\section{Concluding remarks}

\label{secConc}\setcounter{equation}{0}The results in this paper are just
the beginning of the study of the heat flow of isometric $G_{2}$-structures
as well its stationary points: $G_{2}$-structures with divergence-free
torsion. In the study of the harmonic map heat flow and the Yang-Mills flow,
results such as monotonicity formulas and $\varepsilon $-regularity led to a
rich study of singularities and solitons of these flows. Clearly, this
should also be possible in our setting, with the interesting added challenge
of interpreting this in terms of the geometry of $G_{2}$-structures. Other
related concepts such as entropy, that have been defined in the harmonic map
and Yang-Mills cases \cite{BolingKelleherStreetsHM1,KelleherStreetsYM1} also
have an analog and interpretation in our case. Some progress in this
direction has been already made in the recent paper \cite{DGKisoflow}.
Another possible direction is to consider the flow in some particular
simpler settings, such as warped product manifolds with $SU\left( 3\right) $%
-structure that have been considered as models for the Laplacian coflow \cite%
{GrigorianSU3flow,KarigiannisMcKayTsui}, in which case the octonion section
should reduce to a unit complex number, or even with $SU\left( 2\right) $%
-structure, in which case the octonion section may reduce to a quaternion
section. Understanding the behavior of the flow in such special settings may
inform further directions of study.

One property of the flow (\ref{heatflow}) that hasn't been fully used yet is
the gauge-invariance, i.e. invariance of the flow under the change of the
background $G_{2}$-structure, as discussed in Section \ref{secHeat}. We used
this in Section \ref{secrealim} to more conveniently describe the behavior
of the flow in the presence of a torsion-free $G_{2}$-structure. In \cite%
{DGKisoflow}, similar ideas were used to show an \textquotedblleft
Uhlenbeck-type trick\textquotedblright , using which the evolution of the
torsion had a more tractable form. It is however likely that this
gauge-invariance can lead to a better understading of the flow.

\bibliographystyle{spmpsc}

\end{document}